\newcommand{\Z}{\mathbb{Z}}
\newcommand{\R}{\mathbb{R}}
\newcommand{\Q}{\mathbb{Q}}
\newcommand{\C}{\mathbb{C}}
\newcommand{\HH}{\mathbb{H}}
\newcommand{\CP}{\mathbb{CP}}
\newcommand{\RP}{\mathbb{RP}}
\DeclareMathOperator{\Id}{Id}
\DeclareMathOperator{\OO}{O}
\DeclareMathOperator{\GL}{GL}
\DeclareMathOperator{\SO}{SO}
\DeclareMathOperator{\RRef}{Ref}
\DeclareMathOperator{\defect}{def}
\DeclareMathOperator{\Bl}{Bl}
\DeclareMathOperator{\Fix}{Fix}
\DeclareMathOperator{\Pic}{Pic}
\DeclareMathOperator{\Aut}{Aut}
\DeclareMathOperator{\Mod}{Mod}
\DeclareMathOperator{\Diff}{Diff}
\DeclareMathOperator{\Homeo}{Homeo}
\DeclareMathOperator{\pr}{pr}
\DeclareMathOperator{\Cr}{Cr}
\newtheorem{thm}{Theorem}[section]
\newtheorem{cor}[thm]{Corollary}
\newtheorem{prop}[thm]{Proposition}
\newtheorem{lem}[thm]{Lemma}
\theoremstyle{definition}
\newtheorem{defn}[thm]{Definition}
\theoremstyle{remark}
\newtheorem{rmk}[thm]{Remark}
\begin{document}
\title{Isotopy classes of involutions of del Pezzo surfaces}
\author{Seraphina Eun Bi Lee}
\date{}
\maketitle
\begin{abstract}
Let $M_n := \CP^2 \# n\overline{\CP^2}$ for $0 \leq n \leq 8$ be the underlying smooth manifold of a degree $9-n$ del Pezzo surface. We prove three results about the mapping class group $\Mod(M_n) := \pi_0(\Homeo^+(M_n))$:
\begin{enumerate}
	\item the classification of, and a structure theorem for, all involutions in $\Mod(M_n)$,
	\item a positive solution to the smooth Nielsen realization problem for involutions of $M_n$, and
	\item a purely topological characterization of three remarkable types of involutions on certain $M_n$ coming from birational geometry: de Jonqui\'eres involutions, Geiser involutions, and Bertini involutions.
\end{enumerate}
One main ingredient is the theory of hyperbolic reflection groups.
\end{abstract}

\section{Introduction}\label{sec:introduction}

A \emph{del Pezzo surface} is a smooth projective algebraic surface with ample anticanonical divisor class. Any del Pezzo surface is isomorphic to $\CP^1 \times \CP^1$, $\CP^2$, or $\Bl_P\CP^2$ where $P$ is a set of $n$ points (with $1 \leq n \leq 8$) in general position (no three collinear points, no six coconic points, and no eight points on a cubic which is singular at any of the eight points); see \cite[Proposition 8.1.25]{dolgachev}. The degree of the del Pezzo surface $\Bl_P\CP^2$ is $9 - \lvert P \rvert$, the degree of $\CP^1 \times \CP^1$ is $8$, and the degree of $\CP^2$ is $9$.

The smooth $4$-manifolds underlying del Pezzo surfaces are well-understood; we call such manifolds \emph{del Pezzo manifolds}. The blowup of $\CP^2$ at a finite set $P$ of $n$ points is diffeomorphic to the smooth $4$-manifold
\[
	M_n := \CP^2 \# n \overline{\CP^2}.
\]
In particular, the smooth $4$-manifold underlying a del Pezzo surface of degree $1 \leq d \leq 9$ is $M_{9-d}$ if $d \neq 8$ and $M_1$ or $M_* := \CP^1\times \CP^1$ if $d = 8$. Therefore, the manifolds $M_n$ for $0 \leq n \leq 8$ and $M_*$ make up the list of all del Pezzo manifolds. 

In this paper we relate a property (which we call \emph{irreducibility}) of elements of the mapping class group $\Mod(M) := \pi_0(\Homeo^+(M))$ for all del Pezzo manifolds $M$ to the classification of conjugacy classes of order $2$ elements of the group of birational automorphisms of $\CP^2$. In doing so, we realize all order $2$ mapping classes of del Pezzo manifolds by order $2$ diffeomorphisms coming from a construction that we call \emph{complex equivariant connected sums}. This yields an affirmative solution to the smooth Nielsen realization problem for involutions of del Pezzo manifolds, which is different from the solution for some other $4$-manifolds; for example, Farb--Looijenga (\cite{farb--looijenga}) study the Nielsen realization problem for K3 surfaces and show that not all order $2$ mapping classes of K3 surfaces can be smoothly realized by involutions (or even by diffeomorphisms of finite order). See Remark \ref{rmk:farb--looijenga} below. 

\bigskip
\noindent
{\bf{Irreducibility of mapping classes.}}
Let $M$ be a del Pezzo manifold and let $Q_M$ be the intersection form for $M$. If there exist $(A_1, Q_1)$ and $(A_2, Q_2)$ where $A_i$ is a free $\Z$-module and $Q_i$ is a symmetric bilinear form on $A_i$ with an isometry 
\[
	\iota: (A_1 \oplus A_2, Q_1 \oplus Q_2) \to (H_2(M; \Z), Q_M)
\]
then there exists a natural induced inclusion 
\[
	\Aut(A_1, Q_1) \times \Aut(A_2, Q_2) \hookrightarrow \Aut(H_2(M; \Z), Q_M).
\]
By theorems of Freedman (\cite{freedman}) and Quinn (\cite{quinn}), there is an isomorphism $\Phi: \Mod(N)\to \Aut(H_2(N; \Z), Q_N)$ given by $\Phi([f]) = f_*$ for any closed, oriented, and simply connected $4$-manifold $N$. Hence if $(A_i, Q_i)$ for $i = 1, 2$ is of the form $(H_2(N_i, \Z), Q_{N_i})$ for such $4$-manifolds $N_i$, there also exists a natural induced inclusion
\[
	\iota_*: \Mod(N_1) \times \Mod(N_2) \hookrightarrow \Mod(M).
\]
\begin{defn}[{\bf{Irreducibility}}]\label{defn:irreducibility}
Let $M$ be a del Pezzo manifold and let $g \in \Mod(M)$. Suppose there exist a del Pezzo manifold $N$ and some $k \in \Z_{> 0}$ such that there is an isometry
\[
	\iota: (H_2(N; \Z) \oplus H_2(\#k\overline{\CP^2}; \Z), Q_N \oplus Q_{\#k\overline{\CP^2}}) \to (H_2(M; \Z), Q_{M})
\]
and $g$ is contained in the image of $\iota_*$. Then $g$ is called \emph{reducible}. Otherwise, $g$ is called \emph{irreducible}. 
\end{defn} 
Equivalently, $g$ is reducible if there is some isometry $\iota$ as given above such that under this isometry, $g$ preserves $H_2(N; \Z)$ and $H_2(\#k\overline{\CP^2};\Z)$ when considered as an automorphism of $H_2(M; \Z)$. The restriction of $g$ to $H_2(\#k\overline{\CP^2}; \Z)$ acts by an element of the finite group $\OO(k)(\Z) := \OO(k) \cap \GL(k,\Z)$. 

\bigskip
\noindent
{\bf{Involutions in the plane Cremona group.}}
On the other hand, we consider the mapping classes of automorphisms of complex surfaces induced by involutions in the plane Cremona group. It is known that there are three types of order $2$ conjugacy classes in the group of birational automorphisms of $\CP^2$; they are represented by de Jonqui\'eres involutions, Geiser involutions, and Bertini involutions. This classification was first given by Bertini in 1877 (\cite{bertini}) and proven later by Bayle--Beauville (\cite{bayle--beauville}). The Geiser and Bertini involutions lift to complex automorphisms of del Pezzo surfaces of degree $2$ and $1$ respectively. The de Jonqui\'eres involutions lift to complex automorphisms of blowups of $\CP^2$ at finitely many points; because these points are not necessarily in general position, de Jonqui\'eres involutions do not generally lift to automorphisms of del Pezzo surfaces. We prove in Subsection \ref{sec:irreducible-involutions} that the mapping classes of these involutions as diffeomorphisms of del Pezzo manifolds are irreducible.

\bigskip
\noindent
{\bf{Main results.}}
Throughout, we say that an order $2$ mapping class $g \in \Mod(M)$ of a del Pezzo manifold $M$ is \emph{realized} by an automorphism or anti-biholomorphism $f$ of some complex surface $X$ if $f$ has order $2$ and if there exists a diffeomorphism $\varphi: M \to X$ so that $[\varphi^{-1} \circ f \circ \varphi] = g$. This is a special case of a finite subgroup of $\Mod(M)$ realized by a \emph{complex equivariant connected sum}; see Definition \ref{defn:cecs}(\ref{defn:cecs-1}) and the subsequent remarks.

Our main result is a classification of irreducible mapping classes of order $2$ of del Pezzo manifolds. 
\begin{thm}[{\bf{Characterizing de Jonqui\'eres-Geiser-Bertini}}]\label{thm:involutions}
All mapping classes of $\Mod(M_0)$ and $\Mod(M_*)$ are irreducible. For $1 \leq n \leq 8$, an order two element $g \in \Mod(M_n)$ is irreducible if and only if there exists a complex surface $\Bl_P\CP^2$ with $\lvert P \rvert = n$ such that
\begin{enumerate}
\item $g$ is realized by a complex automorphism of $X = \Bl_P\CP^2$ induced by a de Jonqui\'eres involution of (algebraic) degree $d > 2$, a Geiser involution, or a Bertini involution,\footnote{See Section \ref{sec:irreducible-involutions} for definitions of these involutions.} where $P$ is the set of its base points, or
\item $g$ is realized by an order $2$ anti-biholomorphism given by a composition $f \circ \tau$, where $\tau$ is an order $2$ anti-biholomorphism of $\Bl_P\CP^2$ induced by complex conjugation on $\CP^2$ and $f$ is an automorphism of $\Bl_P\CP^2$ induced by a de Jonqui\'eres involution of (algebraic) degree $d > 2$, a Geiser involution, or a Bertini involution, where $P$ is the set of its base points.
\end{enumerate}
\end{thm}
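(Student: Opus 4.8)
The plan is to transport the entire question to the integral intersection lattice and then match irreducible lattice involutions with the three families of Cremona involutions. Write $\Lambda_n = (H_2(M_n;\Z), Q_{M_n})$, an odd unimodular lattice of signature $(1,n)$ with basis $H, E_1, \dots, E_n$ and canonical class $K = -3H + \sum_{i=1}^n E_i$. By the Freedman--Quinn isomorphism $\Phi$, an order two $g \in \Mod(M_n)$ corresponds to an involution $\phi = \Phi(g) \in \Aut(\Lambda_n)$, and $g$ is reducible exactly when $\Lambda_n$ admits a $\phi$-invariant orthogonal decomposition $\Lambda_n = A \oplus B$ with $B \cong k\langle -1\rangle$, $k>0$, and $A$ a del Pezzo lattice. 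I would first dispose of $M_0$ and $M_*$: the lattice $\Lambda_0 = \langle 1\rangle$ has no orthogonal $\langle -1\rangle$ summand, and the even lattice of $M_*$ cannot contain the odd summand $k\langle -1\rangle$; hence every class is vacuously irreducible.

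For $1 \le n \le 8$ the argument splits according to whether $\phi(K) = K$ or $\phi(K) = -K$, and I would show these exhaust all cases for an involution. Complex conjugation $\tau$ on a real blowup acts as $-\Id$ on $\Lambda_n$, so $\tau_* K = -K$; consequently a biholomorphism $f$ has $f_* K = K$ while an anti-biholomorphism $f\circ\tau$ has $(f\circ\tau)_* = -f_*$ and hence sends $K \mapsto -K$. This matches clause (1) with the locus $\{\phi(K)=K\}$ and clause (2) with $\{\phi(K)=-K\}$. The map $\phi \mapsto -\phi$ is a bijection between these two loci that preserves being an involution and, since $-\Id$ is central and respects every orthogonal splitting, preserves irreducibility; moreover it carries the realization $f \circ \tau$ of $\phi$ to the realization $f$ of $-\phi$. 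Thus the second case reduces to the first, and it suffices to prove the biconditional for involutions fixing $K$.

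For $\phi(K) = K$ I would characterize irreducibility through the eigenlattices $\Lambda^{\pm} = \ker(\phi \mp \Id)$. Because $\OO(1)(\Z) = \{\pm 1\}$, a rank-one $\langle -1\rangle$ summand can be split off precisely when some eigenlattice contains a class $e$ with $e^2 = -1$; since $e^\perp$ is then unimodular of signature $(1,n-1)$, it is again a del Pezzo lattice and one may iterate. Hence $\phi$ is irreducible if and only if neither $\Lambda^+$ nor $\Lambda^-$ contains a vector of square $-1$. This is where the reflection group enters: the stabilizer of $K$ in $\Aut(\Lambda_n)$ is, up to the obvious symmetries, the Weyl group of the root system $K^\perp$, and I would use the structure theorem for involutions together with the root data to enumerate those involutions whose eigenlattices avoid square $-1$. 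The Geiser and Bertini involutions correspond to $\phi = -\Id$ on the even lattices $K^\perp \cong E_7, E_8$ (so $\Lambda^- = K^\perp$ has no $(-1)$-class while $\Lambda^+ = \Z K$ has nonnegative squares only), and the de Jonqui\'eres involutions of degree $d > 2$ should account for the remaining irreducible involutions, whose eigenlattices are likewise free of square $-1$.

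The remaining and most delicate step is realization: given an irreducible lattice involution $\phi$ fixing $K$, I must produce a complex surface $\Bl_P\CP^2$ and a biholomorphism inducing $\phi$, and identify it as de Jonqui\'eres, Geiser, or Bertini. Here I would invoke the complex equivariant connected sum construction and the solution of the Nielsen realization problem to realize $\phi$ holomorphically, choosing the point configuration $P$ so that the resulting automorphism descends to a birational involution of $\CP^2$; the Bayle--Beauville classification then forces it into one of the three conjugacy types, and the eigenlattice data computed above pins down which. The converse inclusions --- that these geometric involutions are genuinely irreducible --- are supplied by Subsection \ref{sec:irreducible-involutions}. The main obstacle is precisely this matching: showing that the arithmetic irreducibility condition on eigenlattices corresponds exactly to the geometric irreducibility of the Cremona involution, with no spurious involutions on either side, which is what ties the reflection-group bookkeeping to the birational classification.
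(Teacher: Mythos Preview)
Your proposal has two genuine errors. First, the dichotomy $\phi(K)=K$ versus $\phi(K)=-K$ does not exhaust involutions: for example $\RRef_{E_1}$ on any $M_n$ with $n\ge 1$ sends $K=-3H+\sum E_i$ to $-3H-E_1+E_2+\cdots+E_n$, which is neither $K$ nor $-K$. The paper's actual dichotomy is $\Mod^+(M_n)$ versus its complement (Definition~\ref{defn:mod-plus}), and a separate argument via hyperbolic reflection groups (Lemma~\ref{lem:involution-classification}, Corollary~\ref{cor:involution-classification}) is needed to show that an \emph{irreducible} finite-order element of $\Mod^+(M_n)$ lands in the stabilizer $W_n$ of $K$; you have skipped this reduction. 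Second, and more seriously, your eigenlattice criterion (``$\phi$ irreducible iff no $(-1)$-class in $\Lambda^\pm$'') is false. Take $\phi=\RRef_{H-E_1-E_2-E_3}$ on $M_3$: it is reducible, since it preserves the splitting $\Z\{H-E_1,H-E_2\}\oplus\Z\{H-E_1-E_2,E_3\}$ into the $M_*$-lattice plus $2\langle-1\rangle$ with the two $(-1)$-classes swapped, yet $\Lambda^-=\Z\{H-E_1-E_2-E_3\}$ has only square $-2$ and a direct computation shows every square in $\Lambda^+=(H-E_1-E_2-E_3)^\perp$ equals $2(b_1b_2+b_1b_3+b_2b_3)$, hence is even. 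The point you miss is that an invariant $k\langle-1\rangle$ summand on which $\phi$ acts by a fixed-point-free signed permutation contributes no $(-1)$-class to either eigenlattice. The paper handles this with additional reducibility tests involving swapped pairs and fixed hyperbolic planes (Lemma~\ref{lem:lattices}), and ultimately by a case-by-case run through Carter's tables of Weyl-group conjugacy classes rather than any clean lattice criterion.

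There is also a circularity in your realization step: you propose to invoke the solution of the Nielsen realization problem to produce the biholomorphism, but in this paper that result (Corollary~\ref{cor:involutions}) is deduced \emph{from} Theorem~\ref{thm:involutions}, not used to prove it. The paper instead constructs the realizing (anti-)biholomorphisms by hand: for de Jonqui\'eres involutions via an explicit elementary transformation (Lemma~\ref{lem:de-jonquieres-cp2}), and for the anti-holomorphic case by exhibiting Geiser, Bertini, and de Jonqui\'eres involutions that commute with complex conjugation on a real blowup (Lemma~\ref{lem:tau-f-commute}).
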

There is an index $2$ subgroup $\Mod^+(M_n)$ of $\Mod(M_n)$ for which the following simpler version of Theorem \ref{thm:involutions} holds; see Definition \ref{defn:mod-plus} for a precise description of $\Mod^+(M_n)$.
\begin{thm}[{\bf{Irreducibility classification for $\Mod^+(M_n)$}}]\label{thm:involutions-pt1}
For $1 \leq n \leq 8$, an order two element $g \in \Mod^+(M_n)$ is irreducible if and only if $g$ is realized by a complex automorphism of a complex surface $X = \Bl_P\CP^2$ induced by a de Jonqui\'eres involution of (algebraic) degree $d > 2$, a Geiser involution, or a Bertini involution, where $P$ is the set of its base points.
\end{thm}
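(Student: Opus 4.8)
The plan is to transport the whole question into lattice theory via the Freedman--Quinn isomorphism $\Phi \colon \Mod(M_n) \to \Aut(H_2(M_n;\Z), Q_{M_n})$ and then run a classification of order $2$ isometries. Under $\Phi$, the subgroup $\Mod^+(M_n)$ corresponds to the isometries of $(H_2(M_n;\Z), Q_{M_n}) \cong I_{1,n}$ that fix the canonical class $K$ (the complementary coset, realized by anti-biholomorphisms, sends $K \mapsto -K$); this is the content of Definition \ref{defn:mod-plus}. The ($\Leftarrow$) direction is already supplied by the irreducibility computations of Subsection \ref{sec:irreducible-involutions}, so the work is the forward direction: given an irreducible order $2$ element $g \in \Mod^+(M_n)$, I must produce a blowup $\Bl_P\CP^2 \cong M_n$ and a complex automorphism realizing $g$ of de Jonqui\'eres ($d>2$), Geiser, or Bertini type.

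First I would classify, up to conjugacy, the involutions $\phi = \Phi(g)$ of $I_{1,n}$ fixing $K$. Since $\phi(K) = K$, it preserves the negative-definite sublattice $K^{\perp}$, which for $3 \le n \le 8$ is the root lattice of type $A_1 \oplus A_2, A_4, D_5, E_6, E_7, E_8$ respectively (the cases $n \le 2$ being checked by hand), and $\phi$ is determined by its restriction there. Involutions in $\OO(K^{\perp})$ are organized by their $(\pm1)$-eigenlattices, each spanned by a sub-root-system, and the theory of the hyperbolic reflection group $\OO^+(I_{1,n})$ together with the Weyl group $W(K^{\perp})$ reduces them to finitely many normal forms. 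The distinguishing feature of this range is that $-\Id \in W(K^{\perp})$ exactly when $K^{\perp}$ is of type $E_7$ or $E_8$, i.e.\ only for $n = 7, 8$.

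Next I would pin down reducibility in lattice terms: $\phi$ is reducible precisely when it preserves an orthogonal splitting $I_{1,n} \cong I_{1,n-k} \oplus k\langle -1\rangle$ with $k > 0$, equivalently when it preserves a nonzero negative-definite summand spanned by exceptional square-$(-1)$ classes (either fixed by, or exchanged in pairs under, $\phi$). Reading this condition off the eigenlattice data lets me delete every reducible normal form from the list of the previous step. I expect exactly three families to survive: $-\Id$ on $E_7$ for $n = 7$, $-\Id$ on $E_8$ for $n = 8$, and the involutions with a nontrivial $+1$-eigenspace whose anti-invariant part cannot be split off by exceptional classes.

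Finally, each surviving class must be realized by a complex automorphism of the asserted type. Here I would invoke the realization of lattice isometries fixing $K$ by compositions of biregular automorphisms and Cremona transformations on marked blowups (the freedom being the choice of point configuration $P$ and of the diffeomorphism $\Bl_P\CP^2 \cong M_n$), and then match the normal forms against the Bayle--Beauville classification of conjugacy classes of involutions in the plane Cremona group $\Cr(\CP^2)$: the $E_7$ and $E_8$ classes are the deck involutions of the degree $2$ and degree $1$ del Pezzo surfaces (Geiser and Bertini), while the remaining classes are de Jonqui\'eres involutions. This matching step is the main obstacle. The delicate points are (i) verifying that no spurious irreducible lattice class appears beyond these three, and (ii) showing that the de Jonqui\'eres classes that survive are exactly those of algebraic degree $d > 2$ --- degrees $d \le 2$ producing reducible isometries --- so that the hyperbolic-reflection-group bookkeeping of the third step agrees on the nose with the birational-geometric degree invariant.
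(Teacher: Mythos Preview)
Your proposal contains a factual error that would leave a genuine gap. You assert that under $\Phi$ the subgroup $\Mod^+(M_n)$ corresponds to the isometries fixing the canonical class $K$, and you cite Definition~\ref{defn:mod-plus} for this. That definition says something different: $\Mod^+(M_n) = \OO^+(1,n)(\Z)$ is the index-$2$ subgroup of isometries preserving the positive cone (those with positive $H$-coefficient in the image of $H$). This group is infinite for $n \ge 2$ and properly contains the finite Weyl group $W_n = \mathrm{Stab}(K)$; for instance $\RRef_{E_n} \in \OO^+(1,n)(\Z)$ does not fix $K$. Consequently, classifying involutions in $\OO(K^\perp)$ is not a priori the same as classifying involutions in $\Mod^+(M_n)$, and your argument as written never bridges this gap. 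The paper does bridge it: Lemma~\ref{lem:involution-classification} and Corollary~\ref{cor:involution-classification} use the action of $\OO^+(1,n)(\Z)$ on $\HH^n$ as a hyperbolic reflection group to show that every \emph{irreducible} finite-order element of $\Mod^+(M_n)$ is conjugate into $W_n = G_{E_n}$. That reduction is exactly the ``theory of hyperbolic reflection groups'' advertised in the introduction, and it is not optional --- you need it before you can restrict attention to $K^\perp$.

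Beyond this, your outline diverges from the paper in two further ways. First, the paper enumerates conjugacy classes of involutions in $W_n$ concretely using Carter's tables (\cite{carter}) of Carter graphs $(A_1)^m$, then checks each class by hand against the reducibility criteria of Lemma~\ref{lem:lattices}; your proposal to organize by $(\pm 1)$-eigenlattices is plausible in principle but you give no mechanism for actually producing the finite list, and the step ``delete every reducible normal form'' hides real work (e.g.\ distinguishing the two $(A_1)^4$ classes in $W_7$ and $W_8$). Second, for the realization step you propose to match against the Bayle--Beauville classification of involutions in $\Cr(2)$, whereas the paper deliberately avoids invoking that classification here: it instead constructs the de Jonqui\`eres, Geiser, and Bertini involutions explicitly (Sections~\ref{sec:de-jonquieres}--\ref{sec:geiser-bertini}), computes their action on $H_2$, and verifies directly that they account for the surviving conjugacy classes. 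Bayle--Beauville is only used later, for Corollary~\ref{cor:minimal}.
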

Using the theory of hyperbolic reflection groups and Carter's classification of conjugacy classes of Weyl groups (\cite{carter}), we enumerate the conjugacy classes of involutions in $\Mod^+(M_n)$, of which we study the irreducible ones to prove Theorem \ref{thm:involutions-pt1}. We then extend Theorem \ref{thm:involutions-pt1} to Theorem \ref{thm:involutions} by exhibiting some birational involutions of $\CP^2$ that commute with complex conjugation.

In their classification of conjugacy classes of order $2$ elements of $\Cr(2)$, Bayle--Beauville (\cite{bayle--beauville}) study pairs $(S, \sigma)$ where $S$ is a rational surface and $\sigma \in \Aut(S)$ has order $2$. Such a pair is called \emph{minimal} if $f: S \to S'$ is a birational morphism such that there exists an involution $\sigma' \in \Aut(S')$ and $f\circ \sigma = \sigma' \circ f$ then $f$ is an isomorphism. Bayle--Beauville classify all minimal pairs $(S, \sigma)$ (\cite[Theorem 1.4 and Proposition 1.7]{bayle--beauville}); applying this classification yields the following simple reformulation of Theorem \ref{thm:involutions-pt1}:
\begin{cor}[{\bf{Minimal pairs}}]\label{cor:minimal}
Let $M$ be a del Pezzo manifold. An order $2$ mapping class $g \in \Mod^+(M)$ is irreducible if and only if it is realized by a minimal pair $(S, \sigma)$ where $S$ is diffeomorphic to $M$.
\end{cor}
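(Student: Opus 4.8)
The plan is to derive Corollary~\ref{cor:minimal} from Theorem~\ref{thm:involutions-pt1} by matching its list of three involution types against Bayle--Beauville's classification of minimal pairs, term by term; the content is the identification of the two lists rather than any new geometry. First I would dispose of the boundary cases $M_0 = \CP^2$ and $M_* = \CP^1 \times \CP^1$. These are already minimal rational surfaces, so any order $2$ holomorphic automorphism realizing a class of $\Mod^+(M)$ automatically forms a minimal pair, with nothing to contract; combined with the assertion in Theorem~\ref{thm:involutions} that every class of $\Mod(M_0)$ and $\Mod(M_*)$ is irreducible, both sides of the claimed equivalence hold simultaneously, so the corollary is immediate for these $M$.

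For $1 \le n \le 8$ I would argue the two implications separately, each routed through Theorem~\ref{thm:involutions-pt1}. For the forward implication, suppose $g \in \Mod^+(M_n)$ is an irreducible order $2$ class. By Theorem~\ref{thm:involutions-pt1}, $g$ is realized by a complex automorphism $\sigma$ of some $X = \Bl_P\CP^2 \cong M_n$ induced by a de Jonqui\'eres involution of degree $d > 2$, a Geiser involution, or a Bertini involution, where $P$ is its base-point set. I would then invoke \cite[Theorem~1.4 and Proposition~1.7]{bayle--beauville}: for each of these three types the surface obtained by blowing up $P$, namely a degree $2$ del Pezzo surface, a degree $1$ del Pezzo surface, or a conic bundle respectively, is exactly the minimal resolution of the involution, so that $(X, \sigma)$ is a minimal pair realizing $g$ with $X \cong M_n$.

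For the reverse implication, suppose $g$ is realized by a minimal pair $(S, \sigma)$ with $S \cong M_n$. Contracting $S$ to $\CP^2$ exhibits $\sigma$ as a birational involution of $\CP^2$, which by Bayle--Beauville's classification of involutions in $\Cr(2)$ is conjugate to a de Jonqui\'eres, Geiser, or Bertini involution. Minimality of the pair excludes the only possibility not covered by Theorem~\ref{thm:involutions-pt1}, namely a de Jonqui\'eres involution of degree $2$: such involutions are linearizable, with minimal model $(\CP^2, \text{linear involution})$ rather than a blowup, and so cannot arise from a minimal pair on $M_n$ for $n \ge 1$. Hence $\sigma$ is one of the three admissible types with de Jonqui\'eres degree $d > 2$, and the ``if'' direction of Theorem~\ref{thm:involutions-pt1} yields that $g$ is irreducible.

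The main obstacle is the bookkeeping at the de Jonqui\'eres boundary: I must confirm that the constraint $d > 2$ in Theorem~\ref{thm:involutions-pt1} matches the minimality constraint of Bayle--Beauville exactly. A de Jonqui\'eres involution is governed by a hyperelliptic curve $\Gamma$ with $d = g(\Gamma) + 2$, so minimality, which forces $g(\Gamma) \ge 1$, is equivalent to $d > 2$, while $g(\Gamma) = 0$ gives the degree $2$, linearizable, reducible case with no genuine conic bundle model. Pinning down this dictionary, and checking that the resolution $X$ furnished by Theorem~\ref{thm:involutions-pt1} is the Bayle--Beauville minimal model rather than some intermediate blowup, is the one point where I would argue rather than merely cite.
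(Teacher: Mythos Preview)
Your forward direction is sound and somewhat more direct than the paper's: you cite Bayle--Beauville to identify the models produced by Theorem~\ref{thm:involutions-pt1} as minimal pairs, whereas the paper argues the contrapositive, using \cite[Lemma~1.1]{bayle--beauville} together with Lemma~\ref{lem:lattices} to show that a non-minimal pair forces a $(-1)$-curve configuration making $[\sigma]$ reducible.

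The reverse direction has a genuine gap. You assert that ``minimality of the pair excludes de Jonqui\`eres degree $2$: such involutions are linearizable, with minimal model $(\CP^2,\text{linear involution})$.'' But a linearizable Cremona involution does not have a unique minimal pair: Hirzebruch surfaces $\mathbb F_m$ with $m\ge 2$ carry holomorphic involutions and, having no $(-1)$-curves, already give minimal pairs. For odd $m\ge 3$ one has $\mathbb F_m$ diffeomorphic to $M_1$, so such pairs \emph{do} occur on $M_n$ with $n\ge 1$, contrary to your claim. The paper disposes of this by running through the full Bayle--Beauville list (cases (i)--(vi) of their Theorem~1.4) and observing in case~(i) that any holomorphic involution of $\mathbb F_m$ with $m>0$ fixes both the negative section class and the fiber class, hence acts trivially on $H_2$; thus $[\sigma]$ fails to have order $2$ and the hypothesis is vacuous. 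Your argument needs some form of this observation.

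A related issue: even once you know the associated Cremona involution is de Jonqui\`eres of degree $d>2$, Geiser, or Bertini, invoking the ``if'' direction of Theorem~\ref{thm:involutions-pt1} requires that $(S,\sigma)$ match the model $\Bl_P\CP^2$ appearing there, or at least that $[\sigma]$ be conjugate in $\Mod(M_n)$ to the standard class. Conjugacy in $\Cr(2)$ alone does not give this, since distinct minimal pairs can represent the same Cremona class; you are implicitly relying on the minimal-pair classification anyway. At that point it is cleaner to do what the paper does: enumerate the minimal pairs directly and verify irreducibility case by case via Lemmas~\ref{lem:de-jon-determined}, \ref{lem:de-jonquieres-irreducible}, and~\ref{lem:geiser-bertini-irreducible}.
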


Up to conjugacy, every mapping class of a del Pezzo manifold $M$ is specified by an irreducible mapping class of some del Pezzo manifold and an involution in $\OO(k)(\Z)$ acting on $H_2(\#k\overline{\CP^2}; \Z)$ for some $k \geq 0$. Theorem \ref{thm:involutions} shows that mapping classes of order $2$ are built out de Jonqui\'eres, Geiser, and Bertini involutions and involutions of $M_0$ and $M_*$. In Section \ref{sec:cecs} we describe the construction of \emph{complex equivariant connected sums} that builds smooth involutions representing reducible mapping classes out of biholomorphisms or anti-biholomorphisms of order $2$ that represent irreducible mapping classes. See Figure \ref{fig:type2} for an example of an equivariant connected sum. The smooth Nielsen realization problem for involutions then follows from Theorem \ref{thm:involutions}.
\begin{cor}[{\bf{Nielsen realization for involutions}}]\label{cor:involutions}
Let $M$ be a del Pezzo manifold. Any order $2$ element $g \in \Mod(M)$ is realized by a smooth involution. In fact, $g$ is realized by a complex equivariant connected sum. 
\end{cor}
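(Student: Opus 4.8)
The plan is to deduce the corollary by feeding the irreducible pieces supplied by Theorem~\ref{thm:involutions} into the complex equivariant connected sum construction of Section~\ref{sec:cecs}. Let $g \in \Mod(M)$ have order $2$. By the Freedman--Quinn isomorphism $\Phi \colon \Mod(M) \to \Aut(H_2(M;\Z), Q_M)$ it suffices to produce an orientation-preserving smooth involution $F$ with $F_* = \Phi(g)$, since injectivity of $\Phi$ then gives $[F] = g$. If $g$ is irreducible (in particular whenever $M = M_0$ or $M = M_*$, where every class is irreducible), Theorem~\ref{thm:involutions} already realizes $g$ by an order $2$ biholomorphism or anti-biholomorphism of a del Pezzo surface diffeomorphic to $M$, which is a complex equivariant connected sum with $k = 0$. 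If $g$ is reducible, I would apply the structure theorem, iterating the reduction until the del Pezzo factor is irreducible, to obtain an isometry
\[
	\iota \colon (H_2(N;\Z) \oplus H_2(\#k\overline{\CP^2};\Z),\, Q_N \oplus Q_{\#k\overline{\CP^2}}) \to (H_2(M;\Z), Q_M)
\]
for a del Pezzo manifold $N$ and some $k \geq 0$, under which $g = \iota_*(g_0, \rho)$ with $g_0 \in \Mod(N)$ an irreducible mapping class (possibly the identity) of order dividing $2$ and $\rho \in \OO(k)(\Z)$ an involution.

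Next I would realize the two factors by honest order $2$ (anti)holomorphic models. For the del Pezzo factor, if $g_0$ is trivial take the identity; otherwise Theorem~\ref{thm:involutions} provides a complex surface $X = \Bl_P\CP^2$ diffeomorphic to $N$ together with an order $2$ biholomorphism (a de Jonqui\'eres involution of degree $d > 2$, a Geiser, or a Bertini involution) or an order $2$ anti-biholomorphism $f \circ \tau$ whose mapping class equals $g_0$. For the $\#k\overline{\CP^2}$ factor, note that $Q_{\#k\overline{\CP^2}} = -I_k$, so $\OO(k)(\Z)$ is the group of signed permutation matrices and $\rho$ is, up to conjugacy in this group, block diagonal with blocks $(+1)$, $(-1)$, and $2\times 2$ blocks exchanging two summands. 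I would realize a $(+1)$ block by the identity on a $\overline{\CP^2}$ factor, a $(-1)$ block by complex conjugation on a $\overline{\CP^2}$ factor (an orientation-preserving anti-holomorphic involution acting as $-1$ on $H_2$), and a swap block by a model involution of $\overline{\CP^2}\#\overline{\CP^2}$ exchanging the two factors.

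With these pieces in hand, the complex equivariant connected sum construction glues $\sigma_0$ to the $\overline{\CP^2}$-models along equivariant necks at matching fixed points, producing a single orientation-preserving smooth involution $F$ of $M$. Because the construction computes the homological action of $F$ as the direct sum of the actions of the pieces, and because the pieces were chosen to realize the restrictions of $g$ determined by the reducibility splitting $\iota$, one gets $F_* = \iota_*(g_0, \rho) = \Phi(g)$ exactly (not merely up to conjugacy), so $[F] = g$ and $g$ is realized by a complex equivariant connected sum.

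The main obstacle is the equivariant gluing itself, which is the content of Section~\ref{sec:cecs}: one must arrange the fixed-point data and local models on $X$ and on the $\overline{\CP^2}$ summands so that the involutions agree on the connect-sum spheres and extend smoothly across the necks, while checking that the resulting involution induces precisely the direct-sum action on $H_2(M;\Z)$ rather than some twist of it. A secondary point, more bookkeeping than difficulty, is to enumerate the conjugacy classes of involutions of the hyperoctahedral group $\OO(k)(\Z)$ and to verify that each admits a model of the above form whose fixed locus is compatible with connect-summing to the del Pezzo piece; once these local-to-global matching issues are settled, the corollary follows formally from Theorem~\ref{thm:involutions} and the structure theorem.
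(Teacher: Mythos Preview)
Your overall strategy---decompose into an irreducible del Pezzo piece plus a signed-permutation action on $\#k\overline{\CP^2}$, realize each piece, then glue equivariantly---is exactly the paper's approach, and your identification of the equivariant gluing as the crux is correct. However, you underestimate what is needed to make the gluing go through, and there is one genuine gap.

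The gap is that Lemma~\ref{lem:connect-inv} (the gluing lemma in Section~\ref{sec:cecs}) requires each involution to fix a \emph{real surface}, so that the tangent representations at the attaching points match. For the holomorphic irreducible involutions this is immediate: de Jonqui\`eres, Geiser, and Bertini involutions each fix a complex curve. But for the anti-biholomorphic involutions $f\circ\tau$ supplied by Theorem~\ref{thm:involutions}(2), nothing you have written guarantees a $2$-dimensional fixed component; a priori such an involution could have only isolated fixed points, and then your connected sum would fail. The paper closes this gap by applying the Hirzebruch $G$-signature theorem: it computes $2\sigma(M/\langle f\circ\tau\rangle)-\sigma(M)$ in each case and shows it is nonzero, forcing a nonzero defect and hence a fixed surface. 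You would need this or an equivalent argument.

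A second, smaller error: realizing a $(+1)$ block by the identity on $\overline{\CP^2}$ does not work. The identity has trivial tangent representation everywhere, which cannot match the nontrivial tangent representation of the order-$2$ involution on the del Pezzo side at any fixed point, so there is no equivariant neck. The paper instead uses an honest order-$2$ automorphism of $\overline{\CP^2}$ acting trivially on $H_2$ (namely $[X:Y:Z]\mapsto[-X:Y:Z]$), which fixes a $\CP^1$ and is therefore compatible with Lemma~\ref{lem:connect-inv}. Organizationally, the paper packages all of this as an induction on $n$ that carries along the invariant ``realized by an order-$2$ complex equivariant connected sum fixing a real surface,'' which is precisely the hypothesis Lemma~\ref{lem:connect-inv} needs at each step; your one-shot decomposition is equivalent once these fixed-surface issues are handled.
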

\begin{figure}
\centering
\includegraphics{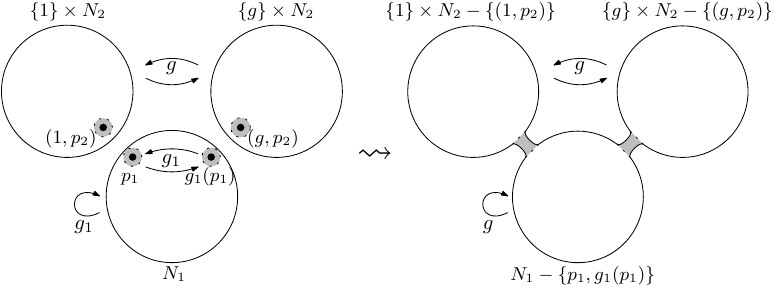}
\caption{The equivariant connected sum $(N_1\#(G\times N_2), G)$ where $G = \langle g \rangle \cong \Z/2\Z$. If $N_1$ is a del Pezzo manifold and $N_2 \cong \overline{\CP^2}$, the action of $G$ on $N_1 \#(G\times N_2)$ induces a reducible mapping class.}\label{fig:type2}
\end{figure}
\begin{rmk}\label{rmk:lee}
The main results of \cite{lee} show that finite subgroups $G \leq \Mod(M_2)$ and maximal finite subgroups $G \leq \Mod(M_3)$ have lifts to $\Diff^+(M_2)$ and $\Diff^+(M_3)$ respectively under the map $\pi: \Diff^+(M_n) \to \Mod(M_n)$ if and only if they are realized by a complex equivariant connected sum. Corollary \ref{cor:involutions} is an analogous statement for the case $G \cong \Z/2\Z$ and any del Pezzo manifold $M$.

The construction of complex equivariant connected sums is necessary in the solution for the smooth Nielsen realization problem. For all $n \geq 1$, there exist mapping classes $g \in \Mod(M_n)$ of order $2$ that cannot be realized by complex automorphisms of any complex structure on $M_n$ by \cite[Theorem 1.8]{lee} even though they can be realized by complex equivariant connected sums.
\end{rmk}
\begin{rmk}\label{rmk:farb--looijenga}
A special case of Corollary \ref{cor:involutions} says that for any Dehn twist $T$ about a $(-2)$-sphere in any del Pezzo manifold $M$, there is an order $2$ diffeomorphism of $M$ (topologically) isotopic to $T$. (For the case $M = M_2$, this is the statement of \cite[Corollary 1.3]{lee}.) In contrast, Farb--Looijenga (\cite[Corollary 1.10]{farb--looijenga}) shows that the (topological) isotopy class of any Dehn twist about a $(-2)$-sphere in a K3 surface is not represented by any finite order diffeomorphism. 
\end{rmk}

\bigskip
\noindent
{\bf{Related work.}} 
This paper is a followup to \cite{lee}. As described in Remark \ref{rmk:lee}, we examine a similar phenomenon in \cite{lee} in which finite subgroups of the mapping class groups of del Pezzo manifolds of high degree are realized by diffeomorphisms if and only if they are realized by complex equivariant connected sums. 

As noted above, Bayle--Beauville (\cite{bayle--beauville}) prove the classification of order $2$ conjugacy classes of the plane Cremona group. Their proof involves studying minimal pairs $(S, f)$ where $S$ is a rational surface and $f \in \Aut(S)$ is an involution. We only invoke the classification (of minimal pairs or order $2$ conjugacy classes in $\Cr(2)$) of Bayle--Beauville in the proof of Corollary \ref{cor:minimal}.

Hambleton--Tanase (\cite[Theorem A]{hambleton--tanase}) show that if $G = \Z/p\Z$ acts smoothly on $\#n \CP^2$ for $n \geq 1$ and $p$ is an odd prime then there exists an equivariant connected sum of linear actions on $\CP^2$ with the same fixed-set data (see \cite{hambleton--tanase} for the exact description of this data) and the same induced action on $H_2(\#n\CP^2; \Z)$. Corollary \ref{cor:involutions} of our paper is similar in flavor in that all involutions on $H_2(M; \Z)$ for del Pezzo manifolds arise from a complex equivariant connected sum. However, our methods are much more elementary than those of Hambleton--Tanase (\cite{hambleton--tanase}) who utilize the theory of equivariant Yang--Mills moduli spaces; conversely, our methods do not yield as much information about the fixed sets of such involutions.

For some other examples of $4$-manifolds, the existence of order $2$ mapping classes of $4$-manifolds that do not lift to an order $2$ diffeomorphism was known; see Raymond--Scott (\cite[Theorem 1]{raymond--scott}) for the case of certain nil-manifolds (in every dimension $d \geq 3$) and Baraglia--Konno (\cite[Theorem 1.2]{baraglia--konno}) for the case of the K3 manifold. The Nielsen realization problem for $4$-manifolds was first studied by Farb--Looijenga in their recent paper \cite{farb--looijenga}. Specifically, Farb--Looijenga study the case of K3 surfaces and solve the metric and complex Nielsen realization problem for all finite groups as well as the smooth Nielsen realization problem for $\Z/2\Z$. Their results show, in particular, that Dehn twists in the K3 manifold are not realized by finite-order diffeomorphisms (\cite[Corollary 1.10]{farb--looijenga}); this result was later extended to all smooth spin $4$-manifolds with non-zero signature by Konno (\cite[Theorem 1.1]{konno}).

\bigskip
\noindent
{\bf{Outline of this paper.}} 
In Section \ref{sec:ModM} we outline the tools necessary to enumerate and study involutions in $\Mod(M)$ and to realize these mapping classes in $\Diff^+(M)$. Section \ref{sec:involutions} is dedicated to the proof of Theorem \ref{thm:involutions}. More specifically, Sections \ref{sec:n04} and \ref{sec:n58} analyze involutions contained in some index $2$ subgroup $\Mod^+(M) \leq \Mod(M)$ for each del Pezzo manifold $M$. Section \ref{sec:irreducible-involutions} describes and examines the three types of conjugacy classes of involutions in the plane Cremona group. Finally, Section \ref{sec:extension} extends the result for $\Mod^+(M)$ to $\Mod(M)$. Finally, Section \ref{sec:nielsen} contains the proof of Corollary \ref{cor:involutions}.

\bigskip
\noindent
{\bf{Acknowledgements.}}
I am grateful to Benson Farb for his support and guidance on this project which been truly invaluable. I thank Farb and Eduard Looijenga for sharing an earlier draft of their paper \cite{farb--looijenga} which shaped and inspired this project. I thank Hokuto Konno for bringing many relevant references on Nielsen realization for $4$-manifolds to my attention. I also thank Danny Calegari and Shmuel Weinberger for their answers to my questions about mapping class groups of $4$-manifolds and finite group actions on $4$-manifolds, Ishan Banerjee for insightful conversations, and R. \.{I}nan\c{c} Baykur, Anubhav Mukherjee, and Nick Salter for their comments on an earlier draft of this paper. Finally, I thank the anonymous referee for their careful reading and for offering many valuable comments and suggestions.

\section{Mapping class groups of del Pezzo manifolds}\label{sec:ModM}
In this section we outline some tools used to study the mapping class groups of del Pezzo manifolds in this paper.
\subsection{The mapping class group}
The Mayer--Vietoris sequence implies that $H_2(M_n; \Z) = H_2(\CP^2; \Z) \oplus H_2(\overline{\CP^2}; \Z)^{\oplus n}$ for any $0 \leq n \leq 8$ and gives a natural $\Z$-basis $\{H, E_1, \dots, E_n\}$ with intersection form $Q_{M_n} \cong \langle 1 \rangle \oplus n \langle -1 \rangle$. The group $\Aut(H_2(M; \Z), Q_{M_n})$ is the indefinite orthogonal group $\OO(1,n)(\Z)$, i.e. by theorems of Freedman (\cite{freedman}) and Quinn (\cite{quinn}),
\[
	\Mod(M_n)\cong \OO(1,n)(\Z).
\]
Next, consider $M = M_* := \CP^1 \times \CP^1$. The lattice $(H_2(\CP^1 \times \CP^1; \Z), Q_{\CP^1 \times \CP^1})$ has two isotropic generators $S_1$ and $S_2$ with $Q_{\CP^1 \times \CP^1}(S_1, S_2) = 1$ coming from the factors of the product $\CP^1 \times \CP^1$. We will identify $\Aut(H_2(M; \Z), Q_M)$ and $\Mod(M)$ for all del Pezzo manifolds $M$ in this paper.

Let $0 \leq k < n$ and let $v_1, \dots, v_{(n-k)}$ denote the orthogonal $\Z$-basis of $H_2(\#(n-k)\overline{\CP^2}; \Z)$. There is an isometry
\[
	\iota_k: (H_2(M_k; \Z), Q_{M_k}) \oplus (H_2(\#(n-k)\overline{\CP^2}; \Z), Q_{\#(n-k)\overline{\CP^2}}) \to (H_2(M_n; \Z), Q_{M_n})
\]
such that for $1 \leq i \leq k$ and $1 \leq j \leq n-k$,
\[
	\iota_k((H, 0)) =  H, \qquad \iota_k((E_i, 0)) = E_i, \qquad \iota_k((0, v_j))= E_{k+j}.
\]
Moreover, there is an isometry
\[
	\iota: (H_2(M_*; \Z), Q_{M_*}) \oplus (H_2(\#(n-1)\overline{\CP^2}; \Z), Q_{\#(n-1)\overline{\CP^2}}) \to (H_2(M_n; \Z), Q_{M_n})
\]
such that for $i = 1, 2$ and $2 \leq j \leq n-1$,
\[
	\iota((S_i, 0)) = H-E_i, \qquad \iota((0, v_1))= H-E_1-E_2, \qquad \iota((0, v_j)) = E_{1+j},
\]
where $S_1$ and $S_2$ denote the two isotropic generators of $H_2(M_*; \Z)$ as above. 
\begin{defn}\label{defn:standard-inclusion}
There exist induced inclusions 
\[
	(\iota_k)_*: \Mod(M_k) \times \Mod(\#(n-k)\overline{\CP^2}) \hookrightarrow \Mod(M_n)
\] for $0 \leq k < n$ and 
\[
	\iota_*: \Mod(M_*) \times \Mod(\#(n-1)\overline{\CP^2}) \hookrightarrow \Mod(M_n)
\]
by theorems of Freedman (\cite{freedman}) and Quinn (\cite{quinn}); see the discussion preceding Definition \ref{defn:irreducibility}. The inclusions $(\iota_k)_*$ and $\iota_*$ are called \emph{standard inclusions}. 
\end{defn}

Note that for $n \geq 2$, $M_n$ is diffeomorphic to $(\CP^1\times\CP^1)\#(n-1)\overline{\CP^2}$. Applying \cite[Theorem 2]{wall-diffeos} to $M_n$ with this diffeomorphism yields the following statement. (The same statement holds for $M_0$, $M_*$, and $M_1$; for example, see Lemma \ref{lem:nielsen-base}.)
\begin{thm}[{Wall, \cite[Theorem 2]{wall-diffeos}}]\label{thm:diffeo-realizable}
For $M = M_*$ or $M_n$ with $2 \leq n \leq 9$, the restriction of $\pi: \Homeo^+(M) \to \Mod(M)$ to the subgroup $\Diff^+(M) \leq \Homeo^+(M)$ is surjective.
\end{thm}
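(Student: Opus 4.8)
The plan is to reduce the statement, via the identification of $\Mod(M)$ with the isometry group of the intersection lattice, to a purely geometric realization problem that is solved by Wall's argument once a hyperbolic summand is visible. By the theorems of Freedman (\cite{freedman}) and Quinn (\cite{quinn}) the map $\Phi\colon\Mod(M)\to\Aut(H_2(M;\Z),Q_M)$, $[f]\mapsto f_*$, is an isomorphism; moreover $Q_M$ is odd, indefinite, and of nonzero signature, hence not isometric to $-Q_M$, so every lattice isometry is induced by an \emph{orientation-preserving} homeomorphism and $\Aut(H_2(M_n;\Z),Q_{M_n})$ is the full group $\OO(1,n)(\Z)$. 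It therefore suffices to prove that the composite
\[
\Diff^+(M)\longrightarrow\Mod(M)\xrightarrow{\ \Phi\ }\Aut(H_2(M;\Z),Q_M)
\]
is surjective, i.e.\ that every isometry of the intersection lattice is realized by an orientation-preserving diffeomorphism.

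First I would make the hyperbolic summand explicit: the diffeomorphism $\CP^2\#2\overline{\CP^2}\cong(\CP^1\times\CP^1)\#\overline{\CP^2}$ used above yields, for $2\le n\le 9$, a smooth splitting $M_n\cong(\#(n-1)\overline{\CP^2})\#(S^2\times S^2)$, while $M_*=S^2\times S^2$. In each case $M$ contains a smooth $S^2\times S^2$ connected summand, which contributes an orthogonal hyperbolic plane $U\subseteq H_2(M;\Z)$ that can be acted on by diffeomorphisms supported in a tubular neighborhood of the summand. Next I would reduce to a generating set of $\Aut(H_2(M;\Z),Q_M)$: the isometry group of the odd indefinite unimodular lattice $\langle 1\rangle\perp n\langle -1\rangle$ is generated by reflections in vectors of norm $\pm1$ and $\pm2$. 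The coordinate permutations of $E_1,\dots,E_n$ and the reflections in the $(-2)$-classes lie in the Weyl-type subgroup and are realized directly by orientation-preserving diffeomorphisms: a permutation is induced by a diffeomorphism exchanging the corresponding $\overline{\CP^2}$ summands, and the reflection $r_v$ in a $(-2)$-class $v$ is realized by the Dehn twist along an embedded $2$-sphere representing $v$ (a diffeomorphism supported near the sphere).

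The main obstacle — and precisely the content of Wall's Theorem 2 — is the realization of the reflections in norm-$(\pm1)$ classes, such as $E_i\mapsto -E_i$ or $H\mapsto -H$. On a single $\pm\CP^2$ summand such a reflection acts as $-\Id$ on a rank-one definite sublattice and can be induced only by an orientation-\emph{reversing} self-map of that summand, so it is not realizable directly inside $\Diff^+$. Wall's device is to absorb this orientation discrepancy into the $S^2\times S^2$ summand: because $S^2\times S^2$ admits self-diffeomorphisms acting by $-\Id$ on its hyperbolic plane $U$ (built from the factor-interchange together with orientation-reversing maps of the two $S^2$ factors), one can combine a reflection supported on the rest of the manifold with a suitable diffeomorphism of the summand to obtain a single orientation-preserving diffeomorphism of $M$ inducing the desired isometry. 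Carrying this out for each such generator, with supports arranged to be pairwise disjoint so that the realizations compose to realize arbitrary products, gives surjectivity; this is exactly \cite[Theorem 2]{wall-diffeos} applied through the summand structure above, and the identical argument handles $M_*=S^2\times S^2$.
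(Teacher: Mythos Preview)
The paper does not prove this theorem; it is quoted as a direct citation of Wall's result \cite[Theorem~2]{wall-diffeos}, applied after noting the diffeomorphism $M_n\cong(\CP^1\times\CP^1)\#(n-1)\overline{\CP^2}$ for $n\ge 2$. Your proposal goes further and sketches Wall's actual argument, and the outline is essentially correct: exhibit an $S^2\times S^2$ summand, generate the isometry group by reflections, realize the $(-2)$-reflections by Dehn twists along embedded spheres and the coordinate permutations by swapping summands, and absorb the orientation discrepancy for $(\pm1)$-reflections into the hyperbolic summand.

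A few points in your sketch deserve care. First, your remark that $Q_M$ is odd with nonzero signature fails for $M_*=S^2\times S^2$, whose form is even with signature $0$; but this step is unnecessary anyway, since an orientation-reversing homeomorphism induces an anti-isometry rather than an isometry, so by definition $\Mod(M)=\pi_0(\Homeo^+(M))\cong\Aut(H_2(M;\Z),Q_M)$ without further argument. Second, the claim that $\Aut(H_2,Q_M)$ is generated by reflections in norm-$(\pm1)$ and norm-$(\pm2)$ vectors is correct for these lattices but is itself a nontrivial input: for $\OO^+(1,n)(\Z)$ it is precisely the content of Wall's \cite[Theorems~1.5,~1.6]{wall-indefinite-orthogonal} (stated in the paper as Theorem~\ref{thm:reflections-wall}), and one then adjoins $\RRef_H$ or $-I$ to reach the full group. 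Third, realizing a $(-2)$-reflection by a Dehn twist requires the class to be represented by a smoothly embedded sphere; this holds for the standard roots of $\mathbb E_n$ in a del Pezzo surface but is not automatic for an arbitrary $(-2)$-class, so you should either restrict to the Weyl generators or cite a realization result. None of these issues is fatal, but they are the places where your sketch leans on facts that want citation rather than assertion.
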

\begin{rmk}
Theorem \ref{thm:diffeo-realizable} cannot be extended to manifolds $M_n$ for $n \geq 10$; Friedman--Morgan (\cite[Theorem 10]{friedman--morgan}) show that the image of the quotient $\pi|_{\Diff^+(M_n)}: \Diff^+(M_n) \to \Aut(H_2(M_n; \Z), Q_{M_n})$ has infinite index in $\Aut(H_2(M_n; \Z), Q_{M_n})$ for all $n \geq 10$.
\end{rmk}

\subsection{Coxeter theory and the group $\OO^+(n,1)(\Z)$}\label{sec:coxeter}
Fix $2 \leq n \leq 9$ and consider the symmetric, bilinear form on $\R^{n+1}$ defined by
\[
	R_n((x_0, x_1, \dots, x_n), (y_0, y_1, \dots, y_n)) = -x_0 y_0 + x_1 y_1 + \dots + x_n y_n.
\] 
We identify $\R^{n+1}$ with $H_2(M_n; \Z) \otimes \R$ such that the ordered $\Z$-basis $\{H, E_1, \dots, E_n\}$ is identified the given ordered basis of $\R^{n+1}$. Then $R_n$ is precisely the bilinear form $-Q_{M_n}$ extended $\R$-linearly. For any $v \in \Z^{n+1} \subseteq \R^{n+1}$ with $R_n(v,v) = \pm 1$, $\pm 2$, a \emph{reflection} $\RRef_v$ about $v$ defines an involution in $\OO(n,1)(\Z)$ by
\[
	\RRef_v(w) := w - \frac{2 R_n(v, w)}{R_n(v,v)} v.
\]

For any $n \geq 0$, let $\OO^+(n,1)(\Z)$ be the index $2$ subgroup of $\OO(n,1)(\Z)$ defined
\[
	\OO^+(n,1)(\Z) := \{A \in \OO(n,1)(\Z) : A(H) = aH + b_1 E_1 + \dots + b_n E_n, \, a > 0 \}.
\]
Wall gives explicit generators of $\OO^+(n,1)(\Z)$ for $2 \leq n \leq 9$ in terms of reflections:
\begin{thm}[{Wall, \cite[Theorems 1.5, 1.6]{wall-indefinite-orthogonal}}]\label{thm:reflections-wall}
For $n = 2$,
\[
	\OO^+(2,1)(\Z) = \langle \RRef_{H+E_1+E_2}, \, \RRef_{E_1-E_2}, \, \RRef_{E_2} \rangle.
\]
For $3 \leq n \leq 9$,
\[
	\OO^+(n,1)(\Z) = \langle \RRef_{H+E_1+E_2+E_3},\, \RRef_{E_1-E_2},\, \RRef_{E_2-E_3},\, \dots, \, \RRef_{E_{n-1}-E_n}, \, \RRef_{E_n}  \rangle.
\]
\end{thm}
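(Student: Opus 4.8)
The plan is to prove all cases at once by viewing $\OO^+(n,1)(\Z)$ as a group of isometries of hyperbolic space and reducing an arbitrary element to the identity. Since $R_n$ has signature $(n,1)$, one sheet of $\{x : R_n(x,x)=-1\}$ is a model of $\HH^n$, the group $\OO(n,1)(\Z)$ acts by isometries, and $\OO^+(n,1)(\Z)$ is exactly the stabilizer of the chosen sheet (the condition $a>0$ records that the time-like vector $H$, with $R_n(H,H)=-1$, points into it). Each listed $v$ is space-like — $R_n(H+E_1+E_2+E_3,\cdot)=R_n(E_i-E_j,\cdot)=2$ and $R_n(E_n,E_n)=1$ — so $\RRef_v$ is a genuine reflection of $\HH^n$, integrality being automatic because $2/R_n(v,v)\in\{1,2\}$. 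Writing $G$ for the group they generate, I would reduce to understanding the stabilizer of the vector $H$: an isometry fixing $H$ preserves $H^\perp\cong\langle1\rangle^{n}$ and so lies in $\Aut(H^\perp)=\OO(n)(\Z)$, the group of signed permutations of $E_1,\dots,E_n$, which is already contained in $G=\langle\RRef_{E_i-E_j},\RRef_{E_n},\dots\rangle$. Hence it suffices to carry $g(H)$ to $H$ by an element of $G$ for every $g\in\OO^+(n,1)(\Z)$; then $g$ itself lands in $\OO(n)(\Z)\subseteq G$.

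The engine is plane-Cremona reduction theory in disguise. For $g(H)=aH+\sum_i b_iE_i$ one has $a^2-\sum_i b_i^2=1$ and $a\ge1$, and the norm-$2$ reflection $\RRef_{H+E_1+E_2+E_3}$ — the standard quadratic transformation based at three points — sends $a$ to $2a-(b_1+b_2+b_3)$. After normalizing $b_1\ge\cdots\ge b_n\ge0$ using $\OO(n)(\Z)\subseteq G$, an elementary inequality that crucially uses $n\le9$ shows $b_1+b_2+b_3\ge a$ whenever $a\ge2$, so a quadratic transformation strictly lowers the ``degree'' $a$ unless $b_1+b_2+b_3=a$; since $a$ is a positive integer the process must terminate. (Invariantly this is the Vinberg functional $x\mapsto-R_n(x,p_0)$ for an interior point $p_0$ of the chamber cut out by the listed mirrors: reflecting in the separating facet strictly decreases it.) For $2\le n\le8$ the reduction terminates at $a=1$, forcing $\sum_i b_i^2=0$ and $g(H)=H$: the only configuration with $b_1+b_2+b_3=a$ and $a\ge2$ is the anticanonical class $(3;1,\dots,1)$ on $M_8$, which is \emph{characteristic} and therefore lies in a different $\OO(n,1)(\Z)$-orbit from the non-characteristic vector $H$, so it never arises as $g(H)$.

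The main obstacle is $n=9$. Here $K^2=0$, so $K$ is isotropic, and there do appear non-characteristic stalled classes — for instance $(3;1,1,1,1,1,1,1,1,0)$, which lies in the orbit of $H$ yet cannot have its degree lowered by any quadratic transformation. Breaking such a stall forces an \emph{essential} use of the norm-$1$ reflection $\RRef_{E_n}$: geometrically the ray $\R_{>0}H$ lies on the mirror of $\RRef_{E_n}$ (indeed $\RRef_{E_n}(H)=H$), so $[H]$ sits on a boundary facet of the chamber rather than in its interior, which is precisely why the degree functional stalls. Making this rigorous is tantamount to proving completeness of the mirror system — that Vinberg's algorithm halts after exactly the listed walls and that the resulting Coxeter polyhedron $P$ has finite volume — and the $n=9$ polyhedron is the delicate one, since it has an ideal vertex modeled on the affine subdiagram $\widetilde E_8$. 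Once $P$ is known to be a fundamental domain, the last step is to show $\operatorname{Stab}_{\OO^+(n,1)(\Z)}(P)=1$: any symmetry permutes the facets of $P$ while fixing the distinguished norm-$1$ facet $E_n$, and the resulting Coxeter diagram admits no nontrivial diagram symmetry realized inside $\OO^+(n,1)(\Z)$, yielding $\OO^+(n,1)(\Z)=G=\langle\RRef_{v_i}\rangle$. The case $n=2$ is handled by the same reduction with its own generating set, where no stall occurs at all.
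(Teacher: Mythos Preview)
The paper does not give its own proof of this theorem: it is quoted verbatim as a result of Wall (\cite[Theorems~1.5, 1.6]{wall-indefinite-orthogonal}), and the surrounding text only adds the remark that the listed generators form a Coxeter system with the diagrams in Figure~\ref{fig:coxeter}. So there is nothing in the paper to compare your argument against beyond the bare citation.

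That said, your sketch is essentially the classical proof, and is presumably close to what Wall does: Noether--Castelnuovo reduction of the ``degree'' $a$ of $g(H)=aH+\sum b_iE_i$ by the quadratic-transformation reflection, after normalizing the $b_i$ by signed permutations, together with the observation that the stabilizer of $H$ is $\OO(n)(\Z)$ and already lies in the proposed generating set. Your identification of the two genuine subtleties is accurate: the stalled class $(3;1^8)$ for $n=8$ is ruled out by the characteristic/non-characteristic dichotomy, and for $n=9$ the stalled class $(3;1^8,0)$ really does lie in the orbit of $H$, so the reduction argument alone is insufficient and one must invoke finite volume of the Coxeter polyhedron (equivalently, termination of Vinberg's algorithm). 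You are candid that this last step is where the work lies; as written it is a correct outline rather than a complete proof, but nothing you have said is wrong. One small point: your assertion that ``the only configuration with $b_1+b_2+b_3=a$ and $a\ge2$'' for $n\le8$ is $(3;1^8)$ deserves a line of justification, since a priori there could be other stalls at larger $a$; the standard argument bounds $\sum b_i^2$ in terms of $b_1(b_1+b_2+b_3)$ and $b_3\sum_{i\ge4}b_i$ to force $a\le3$.
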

\begin{rmk}
Another way to phrase the first equality of Theorem \ref{thm:reflections-wall} is that $\OO^+(2,1)(\Z)$ is the triangle group $\Delta(2, 4, \infty)$. This formulation is classical, shown by Fricke in \cite[p. 64-68]{fricke}.
\end{rmk}
It is straightforward to show that $\OO^+(n,1)(\Z)$ is the Coxeter group corresponding to the Coxeter system $(W(n), S(n))$, where
\[
	S(n) := \begin{cases}
	\{\RRef_{H-E_1-E_2}, \, \RRef_{E_1-E_2}, \, \RRef_{E_2}\} & \text{if }n = 2, \\
	\{\RRef_{H-E_1-E_2-E_3}, \, \RRef_{E_1-E_2}, \, \RRef_{E_2-E_3}, \, \dots, \, \RRef_{E_{n-1}-E_n}, \, \RRef_{E_n}\} & \text{if }3 \leq n \leq 9.
	\end{cases}
\]
The Coxeter diagrams for $(W(n), S(n))$ with $2 \leq n \leq 9$ are given in Figure \ref{fig:coxeter}.
\begin{figure}
\includegraphics{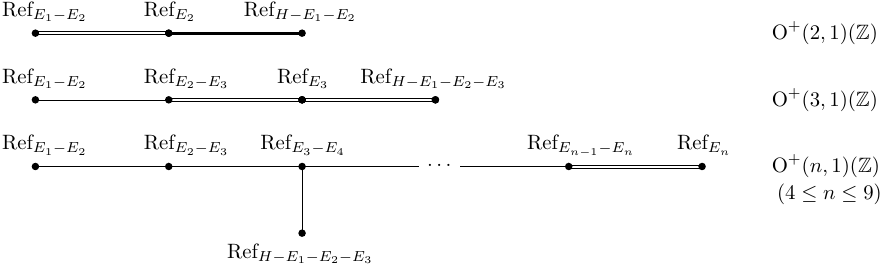}
\caption{The Coxeter diagrams for $\OO^+(n,1)(\Z)$ for $2 \leq n \leq 9$. For fixed $n$, we refer to the specified Coxeter system as $(W(n), S(n))$.} \label{fig:coxeter}
\end{figure}

Let $V_n$ be the $\R$-span of $\{\alpha_s : s \in S(n)\}$ on which $\OO^+(n,1)(\Z)$ acts by the \emph{geometric representation} of $(W(n), S(n))$ and let $B_n$ be the standard symmetric bilinear form of $V_n$ as defined in \cite[Section 5.3]{humphreys}. The signature of $B_n$ is $(n,1)$. There is an isometry $F_n: (V_n, B_n) \to (\R^{n+1}, R_n)$ given on the basis elements of $V_n$ by $F_n(\alpha_{\RRef_v}) = R_n(v,v)^{-\frac 12}v$. One can check that $F_n(s\cdot v) = s \cdot F_n(v)$ for all $v \in V_n$ and $s \in S(n)$. Finally, the submanifold of $\R^{n+1}$ given by
\[
	\HH^n = \{v = (v_0, \dots, v_n) \in \R^{n+1}: v_0 > 0, \, R_n(v,v) = -1\}
\]
with the metric induced by $R_n$ is isometric to hyperbolic $n$-space (see \cite[Chapter 2]{thurston}).

The fact that $\OO^+(n,1)(\Z)$ acts on $\mathbb H^n$ by isometries via the geometric representation of $(W(n), S(n))$ allows for an easy classification of involutions in $\OO^+(n,1)(\Z)$. 
\begin{lem}\label{lem:involution-classification}
Fix $2 \leq n \leq 9$. Suppose $g \in \OO^+(n,1)(\Z)$ has finite order. 
\begin{enumerate}
\item Up to conjugacy in $\OO^+(n,1)(\Z)$, the element $g$ is contained in a subgroup $G_v := \langle s \in S(n) - \{\RRef_v\} \rangle$ for some $\RRef_v \in S(n) - \{\RRef_{E_1-E_2}\}$.
\item Suppose that there does not exist any isometries
\[
	\iota: (H_2(N; \Z), -Q_N) \oplus (\Z^k, k\langle 1 \rangle) \to (\Z^{n+1}, R_n).
\]
where $k > 0$ and $N$ is some del Pezzo manifold such that $g$ preserves the images of each summand under $\iota$. Then $g \in G_{E_n}$ up to conjugacy in $\OO^+(n,1)(\Z)$. 
\end{enumerate}
\end{lem}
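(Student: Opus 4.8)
**

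The plan is to exploit the action of $\OO^+(n,1)(\Z)$ on hyperbolic space $\HH^n$ via the geometric representation of the Coxeter system $(W(n), S(n))$, treating this group as a hyperbolic reflection group with fundamental domain a Coxeter polytope. The key tool is the standard fact (see Humphreys, or the theory of Davis complexes) that a finite subgroup of a Coxeter group fixes a point in the Tits cone, and hence, after transferring via the isometry $F_n$, fixes a point of $\HH^n$. More precisely, I would first argue that since $g$ has finite order, the cyclic group $\langle g \rangle$ fixes a point $x \in \HH^n$. The fundamental domain $D$ for the $\OO^+(n,1)(\Z)$-action is a convex hyperbolic polytope bounded by the reflection hyperplanes of the generators in $S(n)$, and every point of $\HH^n$ is $\OO^+(n,1)(\Z)$-equivalent to a point of $D$. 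Conjugating $g$ by a suitable group element, I may assume its fixed point $x$ lies in $\overline{D}$.

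\textbf{Step 1 (Part 1).} Having placed a fixed point $x \in \overline{D}$, the next step is to identify the stabilizer of $x$. A general principle for Coxeter groups acting on the Tits cone is that the stabilizer of a point in the closure of the fundamental chamber is the \emph{parabolic} (special) subgroup generated by exactly those $s \in S(n)$ whose wall contains $x$, i.e. $\langle s \in S(n) : s(x) = x\rangle$. Since $g$ fixes $x$ and lies in $\OO^+(n,1)(\Z)$, after the conjugation $g$ lies in this parabolic subgroup, which is of the form $\langle S(n) - \{\RRef_v\}\rangle$ once I show the fixed point can be taken to lie on all but one of the walls (a codimension-one stratum). The claim that one may omit precisely a single reflection, and moreover that this reflection is not $\RRef_{E_1-E_2}$, requires examining the combinatorics of which maximal parabolic subgroups are finite: I would use that $B_n$ has signature $(n,1)$, so a parabolic subgroup is finite precisely when its sub-Coxeter-diagram is of finite (spherical) type, which forces the omitted node to be one whose deletion leaves a finite-type diagram. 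Inspecting the diagrams in Figure \ref{fig:coxeter}, deleting the node $\RRef_{E_1-E_2}$ leaves an affine or indefinite diagram (hence an infinite group containing no suitable finite $g$ at the generic stratum), whereas deleting the other nodes can yield finite-type subdiagrams; this rules out $\RRef_{E_1-E_2}$ and gives the restriction $\RRef_v \in S(n) - \{\RRef_{E_1-E_2}\}$.

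\textbf{Step 2 (Part 2).} For the second statement I would translate the reducibility hypothesis into lattice-theoretic language. The subgroups $G_v = \langle S(n) - \{\RRef_v\}\rangle$ for the various omitted nodes $v$ each stabilize an orthogonal splitting of $(\Z^{n+1}, R_n)$: concretely, the span of the roots $\{\alpha_s : s \neq \RRef_v\}$ together with its orthogonal complement. My plan is to show that for every admissible $\RRef_v$ \emph{other than} $\RRef_{E_n}$, this splitting realizes $(\Z^{n+1}, R_n)$ as $(-Q_N)$ for a del Pezzo lattice $N$ orthogonally summed with a nontrivial negative-definite $\langle 1\rangle^{\oplus k}$ (equivalently $Q_{\#k\overline{\CP^2}}$), with $g \in G_v$ preserving both summands — i.e. $g$ is reducible in exactly the sense the hypothesis forbids. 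The cases correspond to deleting an interior node of the $E_n$-type diagram, which splits it into a product of smaller Dynkin diagrams, one factor of which is an $A_k$-chain giving the $\langle 1\rangle^{\oplus k}$ root sublattice and the complementary factor giving a smaller del Pezzo lattice. Thus the only omitted node for which no such nontrivial del Pezzo-plus-$\langle 1\rangle^k$ splitting preserved by $g$ exists is the terminal node $\RRef_{E_n}$, forcing $g \in G_{E_n}$ up to conjugacy.

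\textbf{Main obstacle.} The routine parts are the finite-type classification of deleted subdiagrams and the existence of a fixed point in $\HH^n$. The genuine difficulty lies in Step 2: I must verify, for each admissible omitted node, that the orthogonal decomposition induced by the parabolic subgroup genuinely corresponds to an \emph{integral} splitting of the form $(-Q_N)\oplus\langle 1\rangle^{\oplus k}$ over $\Z$ (not merely over $\R$ or $\Q$) and that the summands are recognizable as a del Pezzo lattice plus a diagonal $\langle 1\rangle^k$ block. Showing the lattice is a \emph{primitive} orthogonal summand, matching it to $H_2$ of an honest del Pezzo manifold, and confirming that $g$ respects this integral structure rather than just fixing the relevant subspace, is where the care is needed; I would handle this by exhibiting explicit $\Z$-bases adapted to each deleted node, reading off the root-sublattice generated by the remaining simple roots and computing its saturation and orthogonal complement directly from the intersection form $\langle 1\rangle \oplus n\langle -1\rangle$.
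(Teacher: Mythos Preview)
Your overall approach matches the paper's: use the action on $\HH^n$, find a fixed point, identify its stabilizer as a standard parabolic $W_I$, then (for Part~2) exhibit for each admissible $v\neq E_n$ an explicit $G_v$-invariant orthogonal splitting of $(\Z^{n+1},R_n)$. Two points in your write-up need correction, however.

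For Part~1, you do \emph{not} need to (and in general cannot) move the fixed point to a codimension-one stratum so that the stabilizer is a maximal parabolic: for instance if $g=\RRef_{E_1-E_2}$ the stabilizer of a generic fixed point in $\overline{P}$ is just $\langle\RRef_{E_1-E_2}\rangle$. What you actually need is far weaker: once $g\in W_I$ for some $I\subsetneq S(n)$, you have $g\in W_I\subseteq G_v$ for \emph{any} $\RRef_v\notin I$, so it suffices to rule out only the two cases $I=S(n)$ and $I=S(n)\setminus\{\RRef_{E_1-E_2}\}$. The paper does this by direct computation: the fixed subspace of $W_{S(n)}$ is $\{0\}$, and the fixed subspace of $W_{S(n)\setminus\{\RRef_{E_1-E_2}\}}$ is the null line $\R\{H-E_1\}$, neither of which meets $\HH^n$. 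Your ``the deleted subdiagram is affine/infinite'' argument is equivalent (an infinite parabolic cannot stabilize a point of $\HH^n$ since such a stabilizer is discrete in a compact group), but your phrasing ``containing no suitable finite $g$'' is not the right reason---infinite parabolics certainly contain finite-order elements.

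For Part~2, your description of the splittings via root sublattices is slightly off (the $A_k$ root lattice is not $\langle 1\rangle^{\oplus k}$), and this is exactly the ``main obstacle'' you flag. The paper bypasses the issue entirely by writing down explicit $\Z$-bases case by case: e.g.\ for $v=E_2-E_3$ one takes $\Z\{H-E_1,\,H-E_2\}\oplus\Z\{H-E_1-E_2,\,E_3,\dots,E_n\}$ (first summand $\cong(H_2(M_*;\Z),-Q_{M_*})$), for $v=H-E_1-E_2-E_3$ one takes $\Z\{H\}\oplus\Z\{E_1,\dots,E_n\}$, and for $v=E_k-E_{k+1}$ with $k\geq 3$ one takes $\Z\{H,E_1,\dots,E_k\}\oplus\Z\{E_{k+1},\dots,E_n\}$. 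These are visibly integral, primitive, and $G_v$-stable, so no saturation or lattice-recognition argument is required.
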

\begin{proof}
\begin{enumerate}
	\item The fundamental domain of the action of $\OO^+(n,1)(\Z)$ on $\HH^n \subseteq (\R^{n+1}, R_n)$ is given by 
	\[
		P := \bigcap_{\RRef_v \in S(n)}\{w \in \HH^n : R_n(w,v) \leq 0\}
	\]
	by \cite[Proposition 4, Table 4]{vinberg}, after conjugating the generators $S(n)$ by the element of $\OO^+(n,1)(\Z)$ which negates each $E_i$ and fixes $H$. If $U \subseteq V_n$ denotes the Tits cone of $(W(n), S(n))$ then $F_n^{-1}(P)$ is contained in $-U := \{u \in V_n: -u \in U\}$. Hence $F_n^{-1}(\mathbb H^n)$ is also contained in $-U$. 

	The finite subgroup $\langle g \rangle$ acts on $\HH^n$. The group $\langle g \rangle$ must fix a point $\HH^n$ by \cite[Corollary 2.5.19]{thurston}. Therefore it must fix a point $F_n^{-1}(\mathbb H^n) \subseteq -U$, and hence also a point $p$ in the Tits cone $U$. By \cite[Theorem 5.13]{humphreys}, the stabilizer of $p$ in $\OO^+(n,1)(\Z)$ is
	\[
		W_I := \langle s \in I \subseteq S(n) \rangle
	\]
	for some $I \subseteq S(n)$, up to conjugation in $\OO^+(n,1)(\Z)$. If $I = S(n)$ then the only fixed point of $W_I$ in $V_n$ is $0$, which is not contained in $F_n^{-1}(\HH^n)$. If $I = S(n) - \{\RRef_{E_1-E_2}\}$, the fixed subspace of $W_I$ in $V_n$ is $F_n^{-1}(\R\{H - E_1\})$, which has empty intersection with $F_n^{-1}(\HH^n)$. Therefore, $g \in G_v$ for some $v$ such that $\RRef_v \in S(n) - \{\RRef_{E_1-E_2}\}$. 

	\item By the first part of this lemma, $g \in G_v$ up to conjugacy in $\OO^+(n,1)(\Z)$ for some $v \in S(n)- \{\RRef_{E_1-E_2}\}$. For all decompositions of $\Z^{n+1}$ given below, the restriction of $R_n$ to the last summand is diagonal and positive definite.
	\begin{enumerate}
		\item If $v = E_2-E_3$ then $G_v$ preserves the summands in the decomposition
		\[
			\Z^{n+1} = \Z\{H-E_1,\, H-E_2\} \oplus \Z\{H-E_1-E_2,\, E_3,\, \dots,\, E_n\}.
		\]
		Note $(\Z\{H-E_1,\, H-E_2\}, R_n) \cong (H_2(M_*; \Z), -Q_{M_*})$.
		\item If $v = E_3-E_4$ then $G_v$ preserves the summands in the decomposition
		\[
			\Z^{n+1} = \Z\{H,\, E_1,\, E_2,\, E_3\} \oplus \Z\{E_4,\, \dots,\, E_n\}.
		\]
		Note $(\Z\{H,\, E_1,\, E_2,\, E_3\}, R_n) \cong (H_2(M_3; \Z), -Q_{M_3})$.
		\item If $v = H-E_1-E_2-E_3$ then $G_v$ preserves the summands in the decomposition
		\[
			\Z^{n+1} =\Z\{H\} \oplus \Z\{E_1,\, \dots,\, E_n\}.
		\]
		Note $(\Z\{H\}, R_n) \cong (H_2(M_0; \Z), -Q_{M_0})$.
		\item If $n = 2$ and $v = H-E_1-E_2$ then $G_v$ preserves the summands in the decomposition
		\[
			\Z^{3} = \Z\{H\} \oplus \Z\{E_1, E_2\}.
		\]
		Note $(\Z\{H\}, R_2) \cong (H_2(M_0; \Z), -Q_{M_0})$. 
		\item If $v = E_k - E_{k+1}$ with $k \geq 4$ then $G_v$ preserves the summands in the decomposition
		\[
			\Z^{n+1} = \Z\{H,\, E_1,\, \dots,\, E_k\} \oplus \Z\{E_{k+1},\, \dots,\, E_n\}.
		\]
		Note $(\Z\{H,\, E_1,\, \dots, \, E_k\}, R_n) \cong (H_2(M_k; \Z), -Q_{M_k})$.
	\end{enumerate}
	All subgroups $G_v$ with $v \neq E_n$ preserve some orthogonal decomposition of $(\Z^{n+1}, R_n)$ specified in the statement of the lemma. Therefore $g$ must be contained in $G_{E_n}$. \qedhere
\end{enumerate}
\end{proof}

In the rest of the paper, we often consider the image of the subgroup $\OO^+(1,n)(\Z) \leq \OO(1,n)(\Z)$ in $\Mod(M_n)$ under the isomorphism $\Phi: \OO(1,n)(\Z) \to \Mod(M_n)$.
\begin{defn}\label{defn:mod-plus}
For any $0 \leq n \leq 9$, let $\Mod^+(M_n)$ denote the index $2$ subgroup $\OO^+(1,n)(\Z)$ of $\Mod(M_n)$ under the isomorphism $\Phi:\OO(1,n)(\Z)\to \Mod(M_n)$. Let $\Mod^+(M_*)$ denote the index $2$ subgroup $\langle c \rangle \cong \Z/2\Z$ of $\Mod(M_*)$ under the isomorphism $\Aut(H_2(M_*; \Z), Q_{M_*}) \to \Mod(M_*)$, where $c$ is the map swapping the isotropic generators $S_1$ and $S_2$ of $H_2(M_*; \Z)$.
\end{defn}
With this definition in hand, we reformulate Lemma \ref{lem:involution-classification} as a statement about irreducibility of mapping classes. 
\begin{cor}\label{cor:involution-classification}
Let $2 \leq n \leq 9$ and let $\Phi$ denote the isomorphism $\OO(1,n)(\Z) \to \Mod(M_n)$. Suppose $g \in \Mod^+(M_n)$ has finite order. If $g$ is irreducible then $g$ is in $W_n := \Phi(G_{E_n}) \leq \Mod^+(M_n)$ up to conjugacy in $\Mod^+(M_n)$. 
\end{cor}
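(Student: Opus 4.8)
The plan is to derive the corollary from Lemma~\ref{lem:involution-classification}(2), supplemented by two structural facts: that irreducibility is invariant under conjugation in $\Mod^+(M_n)$, and that $W_n = \Phi(G_{E_n})$ has an intrinsic description as the stabilizer of the canonical class. First I would record the conjugacy-invariance. If $g$ is reducible, witnessed by an isometry $\iota$ onto an orthogonal splitting $H_2(N;\Z)\oplus H_2(\#k\overline{\CP^2};\Z)$ that $g$ preserves, then for any $h\in\Mod^+(M_n)$ the map $h\circ\iota$ is again an isometry of the same type, and its two summands are preserved by $hgh^{-1}$; hence $hgh^{-1}$ is reducible. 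Thus reducibility, and so irreducibility, is a property of conjugacy classes. This is the bridge needed to connect the conclusion of Lemma~\ref{lem:involution-classification}(2), which is phrased ``up to conjugacy,'' with the fixed subgroup $W_n$.

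Next I would pin down $W_n$ intrinsically. Writing $K = -3H + E_1+\dots+E_n$ for the canonical class, a direct computation shows that each simple root generating $G_{E_n}$ — namely $H-E_1-E_2-E_3$ and the $E_i-E_{i+1}$ — is orthogonal to $K$ under $Q_{M_n}$, and that these roots span the $E_n$ root system inside $K^\perp$. Hence $G_{E_n}$ is contained in the stabilizer of $K$, and I would establish the converse by the standard integrality obstruction: the potential extra isometries of the root lattice $K^\perp$ (diagram automorphisms, or $-\mathrm{id}$ when $-\mathrm{id}\notin W(K^\perp)$) fail to extend to integral isometries of $(H_2(M_n;\Z),Q_{M_n})$ fixing $K$, since $-v+\tfrac{2(v,K)}{(K,K)}K$ is non-integral on the exceptional classes unless the map already lies in $W(K^\perp)$. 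Thus $W_n=\Phi(G_{E_n})$ equals $\mathrm{Stab}(K)$; for $n=9$ the same identification holds with $K$ now isotropic and $W_9$ the affine Weyl group $W(\tilde E_8)$ fixing the cusp $[K]$, and the small case $n=2$ is checked directly from Wall's generators. This characterization turns ``membership in $W_n$'' into the verifiable condition $g(K)=K$.

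The core step is then Lemma~\ref{lem:involution-classification}(2): an irreducible $g$ preserves no splitting of the required type, so the hypothesis of part~(2) is met and $g$ is conjugate in $\Mod^+(M_n)$ into $G_{E_n}$; equivalently, some conjugate of $g$ fixes $K$ and lies in $W_n$. For finite-order $g$ this is immediate from the lemma (whose proof locates a fixed point of $\langle g\rangle$ in $\HH^n$), which is exactly the case relevant to the classification of involutions. For the infinite-order irreducible $g$ that occur only for $n=9$ inside $W(\tilde E_8)$, I would argue separately, analyzing the invariant isotropic line or translation axis of $g$ and showing that if $g$ failed to fix $K$ then its invariant subspaces would furnish an orthogonal del~Pezzo-plus-positive-definite splitting, contradicting irreducibility.

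I expect the main obstacle to be the passage from ``$g$ is conjugate into $W_n$'' to the literal membership $g\in W_n$ asserted by the corollary. Because $W_n=\mathrm{Stab}(K)$ is \emph{not} normal in $\Mod^+(M_n)$ — conjugating $\mathrm{Stab}(K)$ produces $\mathrm{Stab}(hK)$, and irreducible elements need not fix the standard $K$ — this step cannot be purely formal: the honest content is that the conjugacy class of an irreducible element meets $W_n$, and literal membership is recovered only after normalizing $g$, i.e.\ choosing the identification of $(H_2(M_n;\Z),Q_{M_n})$ with the standard lattice so that the canonical-type class fixed by $g$ becomes the standard $K$. Making this normalization precise and compatible with the conjugacy-invariance above, so that the statement is read correctly as one about conjugacy classes, together with the separate infinite-order analysis for $n=9$, are the delicate points that the argument must address.
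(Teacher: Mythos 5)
Your core argument is the paper's argument: the paper's proof of this corollary is just the observation that irreducibility of $g$ is \emph{literally} the non-existence hypothesis of Lemma~\ref{lem:involution-classification}(2) for $\Phi^{-1}(g)$, after flipping the sign of the form via $(H_2(M_n;\Z), -Q_{M_n}) \cong (\Z^{n+1}, R_n)$, whence $\Phi^{-1}(g) \in G_{E_n}$ up to conjugacy. Your first and third paragraphs capture exactly this, and your two supplementary observations on that route are correct and worth making explicit: reducibility is conjugation-invariant (conjugating the isometry $\iota$), and the conclusion $g \in W_n$ must be read up to conjugacy, since $W_n$ is not normal --- the paper's statement silently drops the qualifier that its own lemma carries, and all later applications (e.g.\ in Lemma~\ref{lem:n-low}) indeed use only the conjugacy-class version.

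However, two of your additions are wrong. First, the intrinsic description $W_n = \mathrm{Stab}(K)$ is unnecessary for the proof (the lemma concludes membership in $G_{E_n}$ directly) and fails at $n=2$: there $G_{E_2} = \langle \RRef_{H-E_1-E_2}, \RRef_{E_1-E_2}\rangle$, and $\RRef_{H-E_1-E_2}(K) = -5H + 3E_1 + 3E_2 \neq K$ since $Q_{M_2}(H-E_1-E_2, K) = -1 \neq 0$; the stabilizer description (Dolgachev, Corollary 8.2.15) is the right statement only for $3 \leq n \leq 8$. Second, and more seriously, your claim that infinite-order irreducible elements occur only for $n = 9$ is false, and no separate analysis can rescue the corollary for infinite order: already in $\OO^+(2,1)(\Z) \cong \Delta(2,4,\infty)$ a parabolic element fixes only an isotropic line, hence fixes or negates no class of square $-1$ and preserves no splitting of the required type --- it is irreducible, of infinite order, and not conjugate into the finite group $W_2$. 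Similarly for $n = 9$, a loxodromic whose characteristic polynomial is irreducible over $\Q$ preserves no proper rational subspace, so it is irreducible but not conjugate into $W_9 = \mathrm{Stab}(K)$; your proposed argument via its invariant isotropic line or axis breaks down because those invariant subspaces are irrational and furnish no integral splitting. The correct resolution is the one the paper takes implicitly: the corollary inherits the finite-order hypothesis of Lemma~\ref{lem:involution-classification} (the fixed point in $\HH^n$ is what makes the argument run), and it is only ever applied to finite-order classes. With the $\mathrm{Stab}(K)$ detour and the infinite-order excursion deleted and the finite-order hypothesis stated, your proposal reduces to the paper's proof.
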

\begin{proof}
There is an equality of subgroups $\OO(1,n)(\Z) = \OO(n,1)(\Z) \leq \GL(n+1, \Z)$ and an isomorphism $\Phi: \OO(1,n)(\Z) = \OO(n,1)(\Z) \to \Mod(M_n)$. If $g \in \Mod^+(M_n)$ is irreducible then there does not exist any isometry 
\[
	\iota: (H_2(N; \Z), -Q_N) \oplus (H_2(\#k\overline{\CP^2}; \Z), -Q_{\#k\overline{\CP^2}}) \to (H_2(M_n; \Z), -Q_{M_n}) \cong (\Z^{n+1}, R_n)
\]
such that $\Phi^{-1}(g)$ preserves the image of each summand $(H_2(N; \Z), -Q_N)$ and $(H_2(\#k\overline{\CP^2}; \Z), -Q_{\#k\overline{\CP^2}})$ under $\iota$. Lemma \ref{lem:involution-classification} implies that $\Phi^{-1}(g) \in G_{E_n}$ up to conjugacy in $\Mod^+(M_n)$. 
\end{proof}
For any reflection $\RRef_v \in \OO(1,n)(\Z)$, we also denote the corresponding mapping class $\Phi(\RRef_v)$ by $\RRef_v$ in the rest of the paper. 

\section{Order $2$ elements of $\Mod(M_n)$ with $1 \leq n \leq 8$}\label{sec:involutions}
The goal of this section is to prove Theorems \ref{thm:involutions-pt1} and \ref{thm:involutions} and Corollary \ref{cor:minimal}. 
\subsection{The Weyl group $W(\mathbb E_n)$}\label{sec:En}

Let $X$ be a del Pezzo surface diffeomorphic to $M_n$. By \cite[p. 378]{dolgachev}, the action of any complex automorphism $f \in \Aut(X)$ on $H_2(M_n; \Z)$, denoted by $f_* \in \Aut(H_2(M_n; \Z), Q_{M_n})$, leaves the canonical class $K_X \in H_2(M_n; \Z)$ invariant. The canonical class is given by $K_X = -3H + \sum_{i=1}^n E_i$. 

The restriction of $Q_{M_n}$ to $\mathbb E_n := (\Z\{K_X\})^\perp$ turns $\mathbb E_n$ into an even, negative-definite lattice if $n \leq 8$ by \cite[p. 361]{dolgachev}. For $n \geq 3$, there is a $\Z$-basis of $\mathbb E_n$
\[
	\{H-E_1-E_2-E_3, \, E_1-E_2, \, \dots, \, E_{n-1} -E_n\}
\]
(\cite[Lemma 8.2.6]{dolgachev}). Define the \emph{Weyl group} $W(\mathbb E_n)$ to be the subgroup of $\Mod(M_n)$ generated by the reflections $\RRef_v$ for $v$ in this basis. Observe that $W(\mathbb E_n)$ coincides with the subgroup $W_n$ containing all irreducible involutions of $\Mod^+(M_n)$, up to conjugacy in $\Mod^+(M_n)$, as considered in Corollary \ref{cor:involution-classification}. Moreover, $W_n$ is the stabilizer of $K_X$ in $\OO(1,n)(\Z)$ by \cite[Corollary 8.2.15]{dolgachev} and 
\begin{align*}
W_3 &\cong W(A_2) \times W(A_1), \\
W_4 &\cong W(A_4), \\
W_5 &\cong W(D_5), \\
W_n &\cong W(E_n) \quad\quad \text{for }6 \leq n \leq 8.
\end{align*}
\begin{rmk}
The subgroup of $W_n$ generated by the reflections $\RRef_{E_k-E_{k+1}}$ for $1 \leq k \leq n-1$ is isomorphic to $S_n$ via its action on the set $\{E_1, \dots, E_n\}$.
\end{rmk}

\subsection{Involutions in $\Mod^+(M_n)$ for $n = *$ and $0 \leq n \leq 4$}\label{sec:n04}
In this section we examine the order $2$ elements of $\Mod^+(M)$ for $M = M_n$ with $0 \leq n \leq 4$ and $M = M_*$. We account for the only irreducible mapping classes of order $2$ in $\Mod^+(M)$ for $M = M_*$ or $M_n$ with $0 \leq n \leq 4$ in the following lemma.
\begin{lem}[$n = *$ and $0$]\label{lem:n*0}
Let $M = M_0$ or $M_*$. Any $g \in \Mod(M)$ is irreducible.
\end{lem}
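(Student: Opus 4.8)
The plan is to unwind Definition \ref{defn:irreducibility} and show that no reducing decomposition can exist, for purely lattice-theoretic reasons. Recall that $g$ is reducible only if there is a del Pezzo manifold $N$ and an integer $k > 0$ together with an isometry
\[
\iota: (H_2(N;\Z) \oplus H_2(\#k\overline{\CP^2};\Z),\, Q_N \oplus Q_{\#k\overline{\CP^2}}) \to (H_2(M;\Z),\, Q_M)
\]
with $g \in \image(\iota_*)$. The crucial feature is that $k > 0$ is required, so any reducing decomposition must split off at least one copy of $(H_2(\overline{\CP^2};\Z), \langle -1 \rangle)$. I would prove that the intersection lattices $(H_2(M;\Z), Q_M)$ for $M = M_0$ and $M = M_*$ admit no such orthogonal splitting at all; since reducibility of \emph{any} particular $g$ presupposes the existence of such a splitting, every $g \in \Mod(M)$ is then irreducible vacuously.

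First I would handle $M_0 = \CP^2$ by a rank count. Here $(H_2(M_0;\Z), Q_{M_0}) \cong (\Z\{H\}, \langle 1 \rangle)$ has rank $1$. Every del Pezzo manifold $N$ satisfies $\operatorname{rank} H_2(N;\Z) \geq 1$ (with equality only for $N = M_0$), so a reducing decomposition would have rank $\operatorname{rank} H_2(N;\Z) + k \geq 1 + 1 = 2 > 1$, an immediate contradiction. For $M_* = \CP^1 \times \CP^1$ the same rank count is sharper: $(H_2(M_*;\Z), Q_{M_*})$ has rank $2$, so a reducing decomposition with $k \geq 1$ forces $k = 1$ and $\operatorname{rank} H_2(N;\Z) = 1$, hence $N = M_0$. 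This would yield an isometry
\[
(\Z\{H\} \oplus \Z\{E_1\},\, \langle 1 \rangle \oplus \langle -1 \rangle) \xrightarrow{\ \sim\ } (H_2(M_*;\Z),\, Q_{M_*}).
\]
I would rule this out by a parity (type) obstruction: the form $Q_{M_*}$ is \emph{even}, since in the isotropic basis $\{S_1, S_2\}$ one has $Q_{M_*}(aS_1 + bS_2, aS_1 + bS_2) = 2ab \in 2\Z$ for all $a, b$, whereas $\langle 1 \rangle \oplus \langle -1 \rangle$ is \emph{odd} (the generators have odd self-intersection). An isometry cannot carry an even lattice to an odd one, so no such $\iota$ exists.

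There is essentially no hard step here; the only thing requiring care is the parity computation for $M_*$, since the rank bound alone does not eliminate the candidate $N = M_0$, $k = 1$. Tracking the even/odd distinction of the intersection form is exactly what closes that gap, and it is worth stating this type invariant explicitly so that the argument does not appear to rely on any property of $g$ itself. I would organize the write-up as two short cases (the rank obstruction for $M_0$, the rank-then-parity obstruction for $M_*$), emphasizing that irreducibility for these two manifolds is a statement about the lattice $(H_2(M;\Z), Q_M)$ and is independent of the particular mapping class $g$.
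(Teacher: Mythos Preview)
Your proof is correct and follows essentially the same idea as the paper's: both arguments show that the lattice $(H_2(M;\Z), Q_M)$ cannot contain a copy of $\langle -1 \rangle$, so no reducing decomposition can exist. The paper phrases this uniformly as the nonexistence of any $c \in H_2(M;\Z)$ with $Q_M(c,c) = -1$, whereas you split into a rank obstruction for $M_0$ and a parity obstruction for $M_*$; the content is the same.
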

\begin{proof}
There does not exist $c \in H_2(M; \Z)$ such that $Q_M(c,c) = -1$. Therefore, there is no isometric embedding 
\[
	(H_2(\#k\overline{\CP^2}; \Z), Q_{\#k\overline{\CP^2}}) \hookrightarrow (H_2(M; \Z), Q_M)
\]
for any $k > 0$. 
\end{proof}
The rest of the mapping classes of order $2$ considered in this section are reducible.
\begin{lem}[$1 \leq n \leq 4$]\label{lem:n-low}
Let $1 \leq n \leq 4$. If $g \in \Mod^+(M_n)$ has order $2$ then $g$ is reducible.
\end{lem}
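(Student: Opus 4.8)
The plan is to combine Corollary \ref{cor:involution-classification} with a direct, lattice-theoretic reducibility criterion, and then verify that criterion for each of the finitely many conjugacy classes of involutions in $W_n$. For $2 \le n \le 4$, Corollary \ref{cor:involution-classification} says that any irreducible order-$2$ element of $\Mod^+(M_n)$ is conjugate into $W_n = \Phi(G_{E_n})$. Since reducibility is invariant under conjugation (applying an isometry carries an invariant orthogonal splitting to another such splitting), it suffices to prove that every order-$2$ element of $W_n$ is reducible; the case $n=1$ is not covered by the Corollary, so I would treat it by hand. For $n=1$ one checks directly that $\OO^+(1,1)(\Z) = \{\Id, \RRef_{E_1}\}$ (the only primitive classes of square $\pm 1$ are $\pm H$ and $\pm E_1$), and $\RRef_{E_1}$ negates $E_1$, so it preserves the splitting $\Z\{H\} \oplus \Z\{E_1\} \cong H_2(M_0;\Z)\oplus H_2(\overline{\CP^2};\Z)$ and is reducible.

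The criterion I would isolate is: if an order-$2$ element $g$ preserves a primitive sublattice $L' \le H_2(M_n;\Z)$ with $L' \cong k\langle -1\rangle$ for some $k \ge 1$, then $g$ is reducible. Indeed $L'$ is unimodular, so $H_2(M_n;\Z) = L' \oplus (L')^{\perp}$, and $g$ preserves both summands; the complement $(L')^\perp$ is a unimodular indefinite lattice of signature $(1, n-k)$, hence isomorphic either to $\langle 1\rangle \oplus (n-k)\langle -1\rangle = H_2(M_{n-k};\Z)$ (the odd case) or, when $n-k=1$, to the even lattice $U \cong H_2(M_*;\Z)$. In both cases it is the intersection lattice of a del Pezzo manifold. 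Thus I only need to exhibit, for each $g$, a nonempty set of pairwise-orthogonal classes of square $-1$ that $g$ permutes: such a set spans a copy of $k\langle-1\rangle$, which is automatically primitive (being unimodular) and a direct summand.

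I would then run through the cases. For $n=2$, both generators $\RRef_{H-E_1-E_2}$ and $\RRef_{E_1-E_2}$ of $W_2$ preserve the line $\Z\{H - E_1 - E_2\}$ (the first negates the square-$(-1)$ class $H-E_1-E_2$, the second fixes it), so all of $W_2$ does, and the criterion applies with $k=1$. For $n=3$, $W_3 \cong W(A_2)\times W(A_1) \cong S_3 \times \Z/2\Z$ has three classes of involutions: a transposition such as $\RRef_{E_1-E_2}$ fixes the square-$(-1)$ class $E_3$, whereas the generator $\RRef_{H-E_1-E_2-E_3}$ of the $W(A_1)$-factor and the mixed involution $\RRef_{E_1-E_2}\RRef_{H-E_1-E_2-E_3}$ each swap an orthogonal pair of square-$(-1)$ classes (namely $E_1 \leftrightarrow H - E_2 - E_3$, respectively $E_3 \leftrightarrow H - E_1 - E_2$). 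For $n=4$, $W_4 \cong W(A_4)\cong S_5$ has two classes of involutions, represented by $\RRef_{E_1-E_2}$ and $\RRef_{E_1-E_2}\RRef_{E_3-E_4}$; the first fixes $E_3$ and the second fixes $H - E_1 - E_2$, so the criterion applies with $k=1$.

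The computations are routine; the one genuinely interesting point, which I would highlight as the crux, is that the square-$(-1)$ classes cannot always be chosen $g$-fixed — for the Cremona-type involution $\RRef_{H-E_1-E_2-E_3}$ on $M_3$ no $(-1)$-class is fixed, and one must instead split off a $g$-swapped orthogonal pair. The main pitfall to guard against is the parity of the complementary lattice: splitting off $k\langle-1\rangle$ can leave an even complement $U$, so the relevant del Pezzo manifold is $M_*$ rather than $M_{n-k}$, and one must remember that $M_* = \CP^1\times\CP^1$ is permitted in Definition \ref{defn:irreducibility}. Completeness of the short list of involution conjugacy classes in each $W_n$, which follows from the identifications with $S_3\times\Z/2\Z$ and $S_5$, is what makes this finite check exhaustive.
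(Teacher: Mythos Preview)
Your proof is correct and follows essentially the same strategy as the paper: reduce to $W_n$ via Corollary~\ref{cor:involution-classification}, enumerate the involution conjugacy classes in $W_n$, and exhibit for each an invariant negative-definite sublattice witnessing reducibility. The only differences are tactical---you isolate the reducibility criterion as a standalone lemma and, for $n=3$, correctly list all three conjugacy classes (the paper's enumeration names only the two classes $\RRef_{E_1-E_2}$ and $\RRef_{H-E_1-E_2-E_3}\RRef_{E_1-E_2}$, omitting the pure $\RRef_{H-E_1-E_2-E_3}$, though its subsequent argument about preserving $\Z\{H-E_1,H-E_2\}$ happens to cover that case as well).
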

\begin{proof}
The group $\Mod^+(M_1)$ is generated by $\RRef_{E_1}$. The group $\Mod^+(\CP^2)$ is trivial and the image of the standard inclusion
\[
	\iota_*: \Mod^+(\CP^2) \times \Mod(\overline{\CP^2}) \hookrightarrow \Mod^+(M_1)
\]
is precisely $\Mod^+(M_1)$. Therefore, any $g \in \Mod^+(M_1)$ is reducible.

The group $W_2 \leq \Mod^+(M_2)$ is generated by $\RRef_{E_1-E_2}$ and $\RRef_{H-E_1-E_2}$ which commute in $\Mod^+(M_2)$. The image of the standard inclusion
\[
	\iota_*: \Mod^+(M_*) \times \Mod(\overline{\CP^2}) \hookrightarrow \Mod^+(M_2)
\]
is precisely $W_2$. Then because any $g \in W_2$ is reducible, Corollary \ref{cor:involution-classification} implies that any $g \in \Mod^+(M_2)$ of finite order is reducible.

The group $W_3 \leq \Mod^+(M_3)$ is given by $\langle \RRef_{H-E_1-E_2-E_3}\rangle \times \langle \RRef_{E_1-E_2}, \RRef_{E_2-E_3} \rangle$. The group $\langle \RRef_{E_1-E_2}, \RRef_{E_2-E_3} \rangle$ is isomorphic to $S_3$ so the elements of order $2$ are conjugate in $S_3$ to $\RRef_{E_1 - E_2}$. Therefore, any $g \in W_3$ of order $2$ is conjugate to $\RRef_{H-E_1-E_2-E_3}\circ\RRef_{E_1-E_2}$ or $\RRef_{E_1-E_2}$ in $W_3$. Replace $g$ with its conjugate $\RRef_{H-E_1-E_2-E_3}\circ\RRef_{E_1-E_2}$ or $\RRef_{E_1-E_2}$ and observe in both cases that $g$ preserves $\Z\{H-E_1,\, H-E_2\}$. Therefore, $g$ is reducible because it is contained in the image of the standard inclusion
\[
	\iota_*:\Mod^+(M_*) \times \Mod(\#2\overline{\CP^2}) \hookrightarrow \Mod^+(M_3).
\]
Because any $g \in W_3$ of order $2$ is reducible, Corollary \ref{cor:involution-classification} implies that any $g \in \Mod^+(M_3)$ of order $2$ is reducible.

By the proof of \cite[Theorem 8.5.8]{dolgachev}, the group $W_4 \leq \Mod^+(M_4)$ is isomorphic to $S_5$ generated by the subgroup $S_4 = \langle \RRef_{E_1-E_2}, \RRef_{E_2-E_3}, \RRef_{E_3-E_4} \rangle$ and an element of order $5$. This means that any $g \in W_4$ of order $2$ is conjugate in $W_4$ to an element in $S_4$. The image of the standard inclusion 
\[
	\iota_*: \Mod^+(\CP^2) \times \Mod(\#4\overline{\CP^2}) \hookrightarrow \Mod^+(M_4)
\]
contains $S_4 \leq W_4$ meaning that $g$ is reducible. Then because any $g \in W_4$ of order $2$ is reducible, Corollary \ref{cor:involution-classification} implies that any $g \in \Mod^+(M_4)$ of order $2$ is reducible. 
\end{proof}

\subsection{Irreducible mapping classes and involutions in the Cremona group} \label{sec:irreducible-involutions}

The smallest integer $n \geq 1$ such that there exist irreducible mapping classes of order $2$ in $\Mod^+(M_n)$ is $n = 5$. In order to discuss these irreducible classes, we first need to consider some classical involutions in the plane Cremona group $\Cr(2)$, i.e. the group of birational automorphisms of $\CP^2$. Conjugacy classes of involutions in the plane Cremona group are classified by the following theorem.
\begin{thm}[{Bayle--Beauville, \cite[Theorem 2.6]{bayle--beauville}}]
Every birational involution of $\CP^2$ is conjugate in $\Cr(2)$ to one and only one of the following:
\begin{enumerate}
	\item a de Jonqui\'eres involution of degree $d \geq 2$,
	\item a Geiser involution, or
	\item a Bertini involution.
\end{enumerate}
\end{thm}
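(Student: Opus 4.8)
The plan is to reduce the study of a birational involution to that of a biregular involution of a smooth rational surface, and then to run the equivariant minimal model program. First I would \emph{regularize} $\sigma$: applying resolution of indeterminacy to $\sigma$ and $\sigma^{-1} = \sigma$, one produces a smooth projective rational surface $S$, a biregular involution $g \in \Aut(S)$, and a birational map $\phi \colon S \dashrightarrow \CP^2$ intertwining $g$ and $\sigma$, so that $\sigma$ is conjugate in $\Cr(2)$ to the birational action of $g$. Since $g$ has order $2$, the indeterminacy can be resolved $g$-equivariantly, and after blowing up the base locus I may assume that $(S,g)$ is a pair consisting of a smooth rational surface with an effective $\Z/2\Z$-action.

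Next I would pass to a minimal model in the equivariant category. Setting $G = \langle g \rangle \cong \Z/2\Z$, I would run the $G$-equivariant minimal model program on $(S,g)$, contracting $G$-orbits of $(-1)$-curves until reaching a minimal $G$-surface $(S', g')$. By the classification of minimal rational $G$-surfaces (Manin and Iskovskikh; this is exactly the input Bayle--Beauville use), such a minimal pair falls into one of two families: either $S'$ admits a $G$-equivariant conic bundle structure $S' \to \CP^1$ with $\Pic(S')^G \cong \Z^2$, or $S'$ is a del Pezzo surface with $\Pic(S')^G \cong \Z$.

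Then I would analyze each case to produce the three named involutions. In the conic bundle case, $g'$ preserves the pencil of conics and acts as an involution on the fibers; tracking this pencil back through $\phi$ exhibits $\sigma$ as preserving a pencil of rational curves in $\CP^2$, which is precisely the defining property of a de Jonqui\'eres involution, and one reads off its degree $d \geq 2$ from the bidegree of the pencil. In the del Pezzo case with $\Pic(S')^G \cong \Z$, the invariant $(K_{S'})^2$ is forced to be small; the anticanonical (or bianticanonical) morphism realizes $S'$ as a double cover and $g'$ as its deck transformation. The two surviving subcases are $(K_{S'})^2 = 2$, where $S'$ is a double cover of $\CP^2$ branched over a quartic and $g'$ is the Geiser involution, and $(K_{S'})^2 = 1$, where $S'$ is a double cover of a quadric cone branched over a sextic and $g'$ is the Bertini involution.

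Finally I would establish that the three types are mutually exclusive up to conjugacy by attaching a conjugacy invariant to $\sigma$: the birational isomorphism class of the fixed curve $\Fix(g')$ of the regularized involution. A de Jonqui\'eres involution of degree $d$ has a hyperelliptic fixed curve of genus $d-2$; the Geiser involution has a non-hyperelliptic fixed curve of genus $3$ (a plane quartic); and the Bertini involution has a non-hyperelliptic fixed curve of genus $4$. Since genus and the hyperelliptic versus non-hyperelliptic dichotomy are birational invariants that separate all of these curves, no two distinct types (and no two de Jonqui\'eres involutions of distinct degree) can be conjugate. The main obstacle is the input from the equivariant minimal model program --- the proof that the only minimal rational $G$-surfaces are conic bundles and del Pezzo surfaces of $G$-invariant Picard rank one --- together with the case-by-case geometric identification of $g'$ on the del Pezzo surfaces; by comparison, the regularization step and the final invariant computation are routine.
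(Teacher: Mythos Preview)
The paper does not prove this theorem at all: it is quoted verbatim from Bayle--Beauville as an external input, with no argument given. There is therefore nothing in the paper to compare your proposal against.

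That said, your sketch is a faithful outline of the Bayle--Beauville proof itself: regularize to a biregular involution on a smooth rational surface, run the $G$-equivariant MMP to reach a minimal $G$-surface, invoke the Manin--Iskovskikh dichotomy (conic bundle versus del Pezzo with $\Pic^G\cong\Z$), and separate conjugacy classes via the birational type of the fixed curve. One small inaccuracy: in the del Pezzo branch you write that $(K_{S'})^2$ is ``forced to be small.'' It is not --- the minimal del Pezzo $G$-surfaces with $\Pic^G\cong\Z$ also include $\CP^2$ and $\CP^1\times\CP^1$ (degrees $9$ and $8$), and these contribute the de Jonqui\'eres involutions of low degree (in particular the linear ones); only after disposing of those does one arrive at $K^2=2$ (Geiser) and $K^2=1$ (Bertini). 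This is exactly how Bayle--Beauville organize their case list (their Theorem~1.4, cases (iii)--(vi)), so you should not collapse it.
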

We now briefly recall the definitions of these involutions.
\subsubsection{de Jonqui\'eres involutions}\label{sec:de-jonquieres}
This description of de Jonqui\'eres involutions follows the exposition of \cite[Example 3.1]{blanc}. Fix $g \geq 1$ and $a_1, \dots, a_{2m} \in \C$ distinct with $m = g+1$. Consider the map $\varphi_0: \CP^1 \times \CP^1 \dashrightarrow \CP^1 \times \CP^1$ defined
\[
	\varphi_0: ([X_1:X_2], [Y_1:Y_2]) \mapsto \left( [X_1:X_2], \left[Y_2 \prod_{i=m+1}^{2m} (X_1 - a_i X_2) : Y_1 \prod_{i=1}^m (X_1 - a_i X_2)\right] \right).
\]
The map $\varphi_0$ is rational and defined on the open set $U$, which is the complement of the set of $2m$ points
\[
	P =\{p_i = ([a_i:1],[1:0]) : 1 \leq i \leq m\} \cup \{p_i = ([a_i:1], [0:1]) : m+1 \leq i \leq 2m\}.
\]
Then $\varphi_0$ lifts to an automorphism $\varphi$ of $X := \Bl_P (\CP^1 \times \CP^1)$ of order $2$. To see this, consider the rational map $\psi_0: \CP^1\times \CP^1 \dashrightarrow \CP^1$ given by projecting $\varphi_0$ to the second coordinate, i.e. 
\[
	\psi_0: ([X_1:X_2], [Y_1:Y_2]) \mapsto \left[Y_2 \prod_{i=m+1}^{2m} (X_1 - a_i X_2) : Y_1 \prod_{i=1}^m (X_1 - a_i X_2)\right].
\]
The set of basepoints of $\psi_0$ is equal to $P$ and the construction in the proof of \cite[Theorem II.7]{beauville} shows that $\psi_0$ extends to a rational morphism $X \to \CP^1$. Hence $\varphi_0$ extends to a birational morphism $\psi: X \to \CP^1\times \CP^1$. Finally, the universal property of blowups (\cite[Proposition II.8]{beauville}) shows that $\psi: X \to \CP^1 \times \CP^1$ extends to an automorphism $\varphi$ of $X$. See the following diagram; here, $b: X \to \CP^1\times \CP^1$ is the blowup of the points in $P$. 
\begin{figure}[H]
\centering
\begin{tikzcd}
\Bl_P(\CP^1\times\CP^1) \arrow[d, "b"] \arrow[rr, "\varphi"] \arrow[rrd, "\psi"] &  & \Bl_P(\CP^1\times\CP^1) \arrow[d, "b"] \\
\CP^1\times\CP^1 \arrow[rr, "\psi_0\times\Id = \varphi_0", dashed]               &  & \CP^1\times\CP^1                      
\end{tikzcd}
\end{figure}
One way to think about this extension $\varphi \in \Aut(X)$ is by restricting to the open and dense subset $V \subseteq X$ defined by
\[
	V := (\CP^1 - \{[a_i:1] :  1 \leq i \leq 2m\}) \times \CP^1.
\]
Because $\varphi_0$ restricts to an automorphism of $V$, the automorphism $\varphi$ is the unique continuous extension of $\varphi_0$ to $X$. 

Let $e_i$ denote the homology classes of the exceptional fibers over $p_i \in P$ for all $1 \leq i \leq 2m$ and let $S_1, S_2$ denote the homology classes of $X$ coming from the first and second factors of $\CP^1 \times \CP^1$ respectively. Then $H_2(X; \Z) = \Z\{S_1, S_2, e_1, \dots, e_{2m}\}$ with $Q_X(S_k, e_i) = 0$ and $Q_X(S_k, S_\ell) = 1-\delta_{k\ell}$ for all $k, \ell = 1, 2$ and $1 \leq i \leq 2m$.

Consider the projection map $\pr_0: \CP^1 \times \CP^1 \to \CP^1$ onto the first coordinate which extends to a map $\pr: X \to \CP^1$. Then $\pr\circ\varphi = \pr$ because $\pr_0\circ \varphi_0 = \pr_0$; see Figure \ref{fig:de-jonquieres} for an illustration. The fiber of $\pr$ over any $q \in \CP^1$ with $q \neq [a_i: 1]$ for all $i$ is $\pr^{-1}(q) = \{q\} \times \CP^1$ in $X$, which represents the homology class $S_2$. The map $\varphi$ restricts to a complex automorphism of each fiber $\pr^{-1}(q) = \{q\} \times \CP^1$ and so $\varphi_*(S_2) = S_2$. Over any $[a_i : 1] \in \CP^1$, the fiber $\pr^{-1}([a_i : 1])$ is a bouquet of two $\CP^1$, i.e. two copies of $\CP^1$ intersecting transversely at one point. One component is the exceptional fiber $e_i$ and the other component is the strict transform of the line $\pr_0^{-1}([a_i:1])$ in $X$. To determine the action of $\varphi$ on the exceptional fiber $e_i$, compute that $\varphi_0([a_i:1], [Y_1:Y_2]) = p_i \in P$ for any point of the form $([a_i:1], [Y_1:Y_2]) \in U$. Because $\varphi$ is an automorphism of $X$ of order $2$, this means that the strict transform of $\pr_0^{-1}([a_i:1])$ in $X$ must be sent to the exceptional fiber $e_i$ by $\varphi$ and vice versa. Hence $\varphi$ swaps the two components of $\pr^{-1}([a_i : 1])$. More explicitly, this means that $\varphi_*(e_i) = S_2 - e_i$, where $S_2 - e_i$ is the homology class of this strict transform of $\pr_0^{-1}([a_i:1])$.

\begin{figure}
\centering
\includegraphics{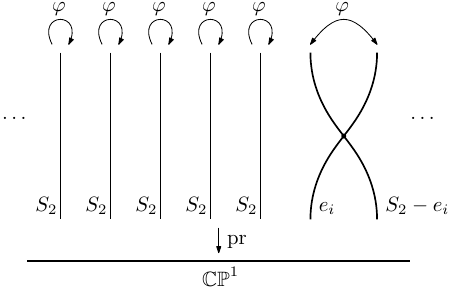}
\caption{The action of a de Jonqui\'eres involution $\varphi$ of $\Bl_P(\CP^1\times\CP^1)$ and the conic bundle $\pr: \Bl_P(\CP^1\times\CP^1) \to \CP^1$ preserved by $\varphi$. Each (vertical) line in the figure above represents a complex submanifold of $\Bl_P(\CP^1\times\CP^1)$ isomorphic to $\CP^1$; they are labelled with their respective homology classes. Each vertical connected component represents a fiber of the map $\pr$. The singular fibers are two copies of $\CP^1$ intersecting transversely at one point; there are $\lvert P \rvert$-many singular fibers.}\label{fig:de-jonquieres}
\end{figure}

The homological data described above determines the action of $\varphi_*$ on $H_2(M_n; \Z)$.
\begin{lem}\label{lem:de-jon-determined}
Let $n \geq 5$ be odd and let $g_1, g_2 \in \Mod(M_n)$. Consider some primitive $C_i \in H_2(M_n; \Z)$ and some $\Z$-submodule $N_i := \Z\{C_i, v_1^i, \dots, v_{n-1}^i\}$ of $H_2(M_n; \Z)$ for $i =1, 2$ such that the restriction of $Q_{M_n}$ to $N_i$ with respect to the given basis is $\langle 0 \rangle \oplus (n-1) \langle -1 \rangle$. If
\[
	g_i(C_i) = C_i, \qquad g_i(v_k^i) = C_i-v_k^i
\]
for all $1 \leq k \leq n-1$ and $i = 1, 2$ then $g_1$ and $g_2$ are conjugate in $\Mod(M_n)$. In particular, any such $g_1$ is conjugate to $[\varphi]$ where $\varphi$ is the de Jonqui\'eres involution on $X = \Bl_{P}(\CP^1\times\CP^1)$ defined above where $\lvert P\rvert = n-1$.
\end{lem}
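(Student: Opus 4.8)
The statement is a rigidity result: two involutions $g_1, g_2 \in \Mod(M_n)$ that act on respective sublattices $N_i$ in the same ``combinatorial'' way (fixing a primitive isotropic class $C_i$ and swapping each $v_k^i$ with $C_i - v_k^i$) are conjugate, and moreover are conjugate to the de Jonqui\'eres mapping class $[\varphi]$. My plan is to reduce everything to a single model by showing that the data $(C_i, v_1^i, \dots, v_{n-1}^i)$ can be carried to a standard basis of $H_2(M_n;\Z)$ by an element of $\Mod(M_n)$, and then to check that under this identification the two involutions literally coincide.

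\medskip
\noindent\textbf{Step 1 (Normal form for the lattice data).}
First I would record that the restriction of $Q_{M_n}$ to $N_i$ is $\langle 0\rangle \oplus (n-1)\langle -1\rangle$, so $C_i$ is isotropic and orthogonal to each $v_k^i$, while the $v_k^i$ are mutually orthogonal $(-1)$-classes. I want to show there is an element of $\Mod(M_n) \cong \OO(1,n)(\Z)$ taking $C_i \mapsto H - E_1$ and $v_k^i \mapsto$ a fixed standard configuration realizing the de Jonqui\'eres picture of Subsection~\ref{sec:de-jonquieres}. Concretely, the de Jonqui\'eres model on $X = \Bl_P(\CP^1\times\CP^1)$ with $\lvert P\rvert = n-1$ has $\varphi_*$ fixing $S_2$ and sending $e_j \mapsto S_2 - e_j$; translating through the isometry $\iota$ of Section~\ref{sec:ModM} (which identifies $S_2$ with some $H - E_i$ and the $e_j$ with the remaining classes), this is exactly the abstract pattern in the hypothesis with $C = S_2$ and $v_k = e_k$. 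The key classification input is that $\Mod(M_n) = \OO(1,n)(\Z)$ acts transitively on configurations $(C; v_1,\dots,v_{n-1})$ of this Gram type inside $H_2(M_n;\Z)$; this should follow from the fact that $H - E_1$ is a primitive isotropic class and that the orthogonal complement machinery (together with Wall's generation results, Theorem~\ref{thm:reflections-wall}) lets one move any such tuple to the standard one.

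\medskip
\noindent\textbf{Step 2 (The involution is determined on all of $H_2$).}
Having moved both tuples to the same standard configuration, the crucial point is that $g_i$ is \emph{completely} determined on $H_2(M_n;\Z)$ by its values on $N_i$ — not just on $N_i$ itself. Since $n$ is odd, $N_i$ has rank $n$ inside the rank-$(n+1)$ module $H_2(M_n;\Z)$, so $N_i$ has corank $1$; I must pin down $g_i$ on one further class. Because $g_i$ is an isometry preserving $C_i$ and the $v_k^i$, it preserves $N_i$ and hence the rank-$1$ orthogonal-type complement, and the condition $g_i^2 = \Id$ together with $\det g_i = \pm 1$ and integrality should force the action on the missing direction uniquely (up to the constraints already imposed). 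I expect this is where a short explicit computation is needed: solve for the image of, say, $H$ (or whichever primitive class completes a $\Z$-basis) using that $g_i$ is an involutive isometry fixing $C_i$ and the prescribed images of the $v_k^i$.

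\medskip
\noindent\textbf{Step 3 (Conclusion and identification with $[\varphi]$).}
Once $g_1$ and $g_2$ are both shown to equal the same explicit involution after applying suitable conjugating elements $h_1, h_2 \in \Mod(M_n)$, we get $h_1 g_1 h_1^{-1} = h_2 g_2 h_2^{-1}$, hence $g_1$ and $g_2$ are conjugate. For the final sentence, the explicit model is precisely $\varphi_*$ for the de Jonqui\'eres involution $\varphi$ of $\Bl_P(\CP^1\times\CP^1)$ with $\lvert P\rvert = n-1$ computed in Subsection~\ref{sec:de-jonquieres}, so $g_1$ is conjugate to $[\varphi]$. \textbf{The main obstacle} I anticipate is Step~2: verifying that the combinatorial data on the corank-$1$ sublattice $N_i$ genuinely forces $g_i$ on the whole unimodular lattice. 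The oddness of $n$ is surely used here to guarantee the complementary class is handled unambiguously, and I would be careful to check that no second possibility for $g_i$ survives the involution and integrality constraints — if it did, one would get two conjugacy classes rather than one, contradicting the statement.
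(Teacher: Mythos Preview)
Your plan is essentially the paper's: complete each $N_i$ to a full $\Z$-basis of $H_2(M_n;\Z)$, show that the hypotheses force $g_i$ uniquely on that basis, and then conjugate by the isometry sending one basis to the other. Two small corrections. For Step~1, the right tool is not Wall's generating set but bare unimodularity: the paper picks $c_i \in \Z\{v_2^i,\dots,v_{n-1}^i\}^\perp$ with $Q_{M_n}(C_i,c_i)=1$, then adjusts $c_i$ by multiples of $C_i$ and $v_1^i$ so that $\{C_i,c_i,v_1^i,\dots,v_{n-1}^i\}$ has a fixed Gram matrix independent of $i$; this \emph{is} your transitivity statement, proved directly. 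For Step~2, the uniqueness is checked on the rank-$3$ summand $\Z\{C_i,c_i,v_1^i\}$ (the only isometry of that hyperbolic-plus-$\langle -1\rangle$ lattice fixing $C_i$ and sending $v_1^i\mapsto C_i-v_1^i$ is the one already written down), and the parity of $n$ plays no role there---oddness of $n$ enters only in the final sentence, because the de Jonqui\`eres construction requires $\lvert P\rvert = n-1$ to be even.
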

\begin{proof}
Suppose the restriction of $Q_{M_n}$ to $\Z\{v_1^i, \dots, v_{n-1}^i\}^\perp \subseteq H_2(M_n; \Z)$ is odd for $i = 1$ or $2$. Then one can check that the restriction of $Q_{M_n}$ to $\Z\{C_i-v_1^i, v_2^i, \dots, v_{n-1}^i\}^\perp$ is even, and 
\[
	g_i(C_i-v_1^i) = g_i(C_i) - g_i(v_1^i) = C_i - (C_i - v_1^i).
\]
Furthermore, the restriction of $Q_{M_n}$ to $\Z\{C_i, C_i-v_1^i, v_2^i, \dots, v_{n-1}^i\}$ with respect to the given basis is $\langle 0 \rangle \oplus (n-1) \langle -1 \rangle$. Hence after possibly replacing $v_1^i$ with $C_i-v_1^i$, we may assume that the restriction of $Q_{M_n}$ to $\Z\{v_1^i, \dots, v_{n-1}^i\}^\perp$ is even for each $i$.

For each $i = 1, 2$, there is an orthogonal decomposition
\[
	H_2(M_n; \Z) = \Z\{v_2^i ,\,\dots,\, v_{n-1}^i\}\oplus \Z\{C_i,\, c_i',\, v_1^i\} 
\]
where $c_i' \in \Z\{v_2^i ,\,\dots,\, v_{n-1}^i\}^\perp$ is such that $Q_{M_n}(C_i, c_i') = 1$ which exists by unimodularity of $Q_{M_n}$ restricted to $\Z\{v_2^i ,\,\dots,\, v_{n-1}^i\}^\perp$. Denote $Q_{M_n}(c_i', v_1^i)$ by $A$ and let
\[
	c_i := \left(c_i' + A v_1^i\right) - \left(\frac{Q_{M_n}(c_i' + A v_1^i, c_i' + A v_1^i)}{2}\right)C_i.
\]
Note that $Q_{M_n}(c_i'+Av_1^i, c_i'+Av_1^i)$ is even because $c_i' + Av_1^i$ is in $\Z\{v_1^i, \dots, v_{n-1}^i\}^\perp$. Compute that that with respect to the $\Z$-basis $(C_i, c_i, v_1^i)$,
\[
	Q_{M_n}|_{\Z\{C_i,\, c_i,\, v_1^i\} } = \begin{pmatrix}
	0 & 1 & 0 \\
	1 & 0 & 0 \\
	0 & 0 & -1
	\end{pmatrix}.
\]
There is another orthogonal decomposition 
\[
	H_2(M_n; \Z) = \Z\{C_i-v_2^i,\, \dots,\, C_i-v_{n-1}^i\}\oplus \Z\{C_i,\, c_i-v_1^i-\dots-v_{n-1}^i,\, C_i-v_1^i\}.
\]
The only automorphism of $\Z\{C_i,\, c_i,\, v_1^i\}$ preserving $Q_{M_n}$ and fixing $C_i$ and $v_1^i$ is the identity. This uniquely determines $g_i$ since $g_i$ restricts to an isometry
\[
	g_i: \Z\{C_i,\,c_i,\,v_1^i\} \to \Z\{C_i,\, c_i-v_1^i-\dots-v_{n-1}^i,\, C_i-v_1^i\}
\]
with respect to the restrictions of $Q_{M_n}$ satisfying 
\[
	g_i(C_i) = C_i, \qquad g_i(v_1^i) = C_i-v_1^i.
\]
Finally let $\Phi \in \Mod(M_n)$ such that for all $1 \leq k \leq n-1$,
\[
	\Phi(v_k^1) = v_k^2, \qquad \Phi(C_1) = C_2, \qquad \Phi(c_1) = c_2.
\]
Then $g_1 = \Phi^{-1} \circ g_2\circ \Phi$.
\end{proof}

The birational involution $\varphi_0$ has (algebraic) degree $m+1$. Because $m = g+1 \geq 2$ in all constructions in this paper, any de Jonqui\'eres involution that we consider has degree $d > 2$. Moreover, $\varphi_0$ is birationally equivalent to the de Jonqui\'eres involutions of \cite[Example 2.4(c)]{bayle--beauville}. In the following lemma, we consider an explicit birational equivalence with an automorphism $f$ of a surface $\Bl_{P_0}\CP^2$. 
\begin{lem}\label{lem:de-jonquieres-cp2}
For any odd $n \geq 5$, there exist $P_0 \subseteq \RP^2 \subseteq \CP^2$ with $\lvert P_0 \rvert = n$ and an involution $f \in \Aut(\Bl_{P_0}\CP^2)$ conjugate to a de Jonquier\'es involution $\varphi_0$ described above in $\Cr(2)$ such that 
\begin{enumerate}
	\item $H_2(\Bl_{P_0}\CP^2; \Z) \cong H_2(\Bl_P(\CP^1\times\CP^1); \Z)$ as $\Z[G]$-modules with $G = \Z/2\Z$ acting by $\langle [f]\rangle$ and $\langle [\varphi]\rangle$ respectively,
	\item $f$ commutes with the anti-biholomorphism $\tau: \Bl_{P_0}\CP^2 \to \Bl_{P_0}\CP^2$ induced by complex conjugation on $\CP^2$, and
	\item $[f]$ is conjugate to $\prod_{k=1}^{\frac{n-1}{2}} \left(\RRef_{H-E_1-E_{2k}-E_{2k+1}} \circ \RRef_{E_{2k} - E_{2k+1}}\right)$ in $\Mod(M_n)$ after identifying $M_n \cong \Bl_{P_0}\CP^2$.
\end{enumerate}
\end{lem}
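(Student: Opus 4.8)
The plan is to build the automorphism $f$ by transporting the de Jonquières involution $\varphi$ of $X = \Bl_P(\CP^1 \times \CP^1)$, with $|P| = n-1 = 2m$ and $m = \tfrac{n-1}{2}$, to a blowup of $\CP^2$ at honest real points, and then to read off all three conclusions from its homological action. First I would take every parameter $a_1, \dots, a_{2m}$ in the construction of $\varphi_0$ to be real and distinct; then $\varphi_0$, and hence its lift $\varphi$, is defined over $\R$ and commutes with complex conjugation. I would realize the conic bundle $\pr \colon X \to \CP^1$ as a pencil of lines through a point: choosing a real point $z_1 \in \RP^2$ together with one real point on each of $2m$ distinct real lines through $z_1$ produces a set $P_0 \subseteq \RP^2$ with $|P_0| = n$ such that $\Bl_{P_0}\CP^2$ carries the pencil $|H-E_1|$ with exactly $2m$ singular fibres, matching $X$ as a real conic bundle. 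Letting $f$ be the involution of $\Bl_{P_0}\CP^2$ corresponding to $\varphi$ under a real isomorphism of these conic bundles makes $f$ conjugate to $\varphi_0$ in $\Cr(2)$ and, because the two conjugations correspond, shows that $f$ commutes with $\tau$, which is conclusion (2).

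For conclusions (1) and (3) I would work entirely on homology. Writing $g := \prod_{k=1}^{m}\left(\RRef_{H - E_1 - E_{2k} - E_{2k+1}} \circ \RRef_{E_{2k} - E_{2k+1}}\right)$, a direct application of the reflection formula shows that each factor fixes $H - E_1$ and sends $E_{2k}, E_{2k+1}$ to $H-E_1-E_{2k}, H-E_1-E_{2k+1}$; since the blocks $\{E_{2k}, E_{2k+1}\}$ are disjoint and every factor fixes $H - E_1$, the product satisfies $g(H-E_1) = H-E_1$ and $g(E_j) = H - E_1 - E_j$ for all $2 \le j \le n$. Taking $C_1 := H - E_1$ (primitive and isotropic) and $v_{j-1} := E_j$ for $2 \le j \le n$ gives $Q_{M_n}|_{N_1} \cong \langle 0\rangle \oplus (n-1)\langle -1 \rangle$ with $g(C_1) = C_1$ and $g(v_k) = C_1 - v_k$, so Lemma \ref{lem:de-jon-determined} identifies $g$ up to conjugacy in $\Mod(M_n)$ with $[\varphi]$; in particular $g$ and $\varphi_*$ determine isomorphic $\Z[G]$-modules, which is (1). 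For (3) I would observe that the geometric description of $f$ — it preserves $|H-E_1|$ and swaps the two components $E_j$ and $H - E_1 - E_j$ of each singular fibre — gives $f_*(H-E_1) = H - E_1$ and $f_*(E_j) = H-E_1-E_j$, and then the relation $f_* K_X = K_X$ forces $f_*(H) = (m+1)H - mE_1 - \sum_{j\ge 2}E_j$ and hence $f_* = g$ exactly.

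The step I expect to be the main obstacle is the construction of this real honest-point plane model, i.e. the isomorphism between $X$ and $\Bl_{P_0}\CP^2$ with $P_0 \subseteq \RP^2$. The base points of $\varphi$ are in special position — $p_1, \dots, p_m$ lie on one ruling and $p_{m+1}, \dots, p_{2m}$ on the other — so the naive quadratic link centred at a single $p_i$ does not contract to $\CP^2$, because the strict transform of the ruling through $p_i$ is not a $(-1)$-curve. I would instead pass through the relatively minimal conic bundle: contracting one component of each singular fibre exhibits $X$ as a blowup of a real Hirzebruch surface, from which one recovers the pencil of lines through $z_1$ and the honest real points $P_0$. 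The delicate points to verify here are that the whole picture is defined over $\R$, that the $n$ points are genuinely distinct points of $\RP^2$ rather than infinitely near ones, and that the resulting $f$ is a genuine de Jonquières involution of algebraic degree $m+1 > 2$, so that it is conjugate in $\Cr(2)$ to one of the involutions $\varphi_0$.
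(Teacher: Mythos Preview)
Your treatment of the homological conclusions (1) and (3) via Lemma~\ref{lem:de-jon-determined} is correct and is exactly how the paper argues; the explicit computation of $g(E_j) = H - E_1 - E_j$ is right, and your claim that under the natural identification one gets $f_* = g$ on the nose (not just up to conjugacy) is a pleasant sharpening.

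The genuine gap is precisely where you flag it: producing a \emph{biregular} real isomorphism $X \cong \Bl_{P_0}\CP^2$ carrying $\varphi$ to an automorphism $f$. Your proposed mechanism --- ``contracting one component of each singular fibre exhibits $X$ as a blowup of a real Hirzebruch surface'' --- lands you on $\mathbb F_0 = \CP^1\times\CP^1$, not on $\mathbb F_1 = \Bl_{z_1}\CP^2$, so it does not by itself recover a pencil of lines in $\CP^2$. Worse, since $\varphi$ \emph{swaps} the two components of each singular fibre, no choice of one component per fibre is $\varphi$-equivariant, so this contraction cannot transport the involution. Correspondingly, your first paragraph implicitly assumes that a generic $\Bl_{P_0}\CP^2$ with $2m$ singular fibres in $|H-E_1|$ is isomorphic to $X$ as a conic bundle; these bundles have moduli, so one must construct $P_0$ from $\varphi$ rather than choose it freely.

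The paper supplies the missing step with an elementary transformation centred at a real \emph{fixed point} $p$ of $\varphi$ on a smooth fibre: blow up $p$ (equivariant, since $p$ is fixed) to obtain $S = \Bl_{\{p\}\cup P}\mathbb F_0$, then blow down the strict transform of the fibre through $p$ (a $\tilde\varphi$-invariant $(-1)$-curve, since $\varphi$ preserves fibres). This realises $S \cong \Bl_{q_1,q_2}\CP^2$ and descends $\tilde\varphi$ to an involution $f$ of $\Bl_{F(P)}\mathbb F_1 = \Bl_{P_0}\CP^2$ with $P_0 = F(P)\cup\{q_1\}$. Choosing $p$ with real coordinates makes the whole diagram commute with complex conjugation, giving (2). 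Once you insert this step, the rest of your argument goes through unchanged.
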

\begin{proof}
Let $a_1, \dots, a_{n-1} \in \R$ be distinct and let $\varphi_0$ and $P \subseteq \CP^1 \times \CP^1$ be defined as above. Fix some $p = ([a:1], [b:1])$  with $a, b \in \R_{\neq 0}$ and $a \neq a_i$ for all $i$ such that $\varphi_0(p) = p$ and let $q_1 = [0:0:1]$, $q_2 = [0:1:0]$. Consider the Hirzebruch surfaces $\mathbb F_0 = \CP^1\times \CP^1$ and $\mathbb F_1 = \Bl_{q_1}\CP^2$. There is an isomorphism $\Bl_p\mathbb F_0 \cong \Bl_{q_1, q_2}\CP^2$ which can be seen by explicitly writing
\[
	\Bl_{q_1, q_2}\CP^2 = \{([A:B], [C:D], [X:Y:Z]) : A(X+Y) - aBY = C(X+Z) - bDZ = 0\} \subseteq \CP^1\times\CP^1\times\CP^2
\]
and noting that the projection map onto the first two factors defines a blowup $\psi_1: \Bl_{q_1, q_2}\CP^2 \to \mathbb F_0$ which is an isomorphism onto $\mathbb F_0 - \{p\}$. Let $\psi_2: \Bl_{q_1, q_2} \CP^2 \to \mathbb F_1$ be a blowup given by projecting onto the first and third factors. The exceptional divisor of $\psi_2$ is the strict transform of $\pr_0^{-1}([a:1])$ in $\Bl_p\mathbb F_0$.

The rational map $F: \mathbb F_0 \dashrightarrow \mathbb F_1$ given by $F = \psi_2 \circ \psi_1^{-1}$ is the elementary transformation centered at $p$ (cf. \cite[Section 7.4.2]{dolgachev} or \cite[(2.5)]{bayle--beauville}) and is a morphism restricted to $\mathbb F_0 - \{p\}$. Note that $\psi_1^{-1}(P)$ is not contained in the exceptional divisor of $\psi_2$. Hence $F$ extends to $\Bl_P\mathbb F_0 \dashrightarrow \Bl_{F(P)} \mathbb F_1$; also denote this map by $F$. The maps $\psi_1$ and $\psi_2$ similarly extend and fit into the following commutative diagram:
\begin{figure}[H]
\centering
\begin{tikzcd}
& S := \Bl_{\{p\} \cup P} \mathbb F_0 \arrow[ld, "\psi_1"'] \arrow[rd, "\psi_2"] &                       \\
\Bl_P \mathbb F_0 \arrow[rr, "F", dashed] &                              & \Bl_{F(P)}\mathbb F_1
\end{tikzcd}
\end{figure}
Let $e \in H_2(S; \Z)$ denote the exceptional divisor over $p$. Let $\varphi$ be the automorphism of $\Bl_P \mathbb F_0$ induced by $\varphi_0$. Because $\varphi(p) = p$, the map $\varphi$ extends to an involution $\tilde \varphi$ of $S$. Because $\varphi$ preserves the fibers of $\pr$, the map $\tilde\varphi$ descends to an involution $f$ of $\Bl_{F(P)}\mathbb F_1$. Note that $f$ and $\varphi$ are conjugate in $\Cr(2)$ since $f = F \circ \varphi \circ F^{-1}$ as birational automorphisms of $\CP^2$. There are isometries
\begin{align*}
	(H_2(S; \Z), Q_S) &\cong (H_2(\Bl_P\mathbb F_0; \Z), Q_{\Bl_P\mathbb F_0}) \oplus (\Z\{e\}, Q_S|_{\Z\{e\}}) \\
	&\cong (H_2(\Bl_{F(P)}\mathbb F_1; \Z), Q_{\Bl_{F(P)}\mathbb F_1}) \oplus (\Z\{S_2-e\}, Q_S|_{\Z\{S_2-e\}})
\end{align*}
where the action of $[\tilde \varphi]$ on $H_2(S; \Z)$ restricts to the actions of $[\varphi]$ and $[f]$ on $H_2(\Bl_P\mathbb F_0; \Z)$ and $H_2(\Bl_{F(P)}\mathbb F_1; \Z)$ respectively. The $\Z[\langle [\tilde\varphi]\rangle]$-submodule $N := \Z\{S_2,\, e_1,\, \dots, e_{n-1}\}$ is contained in both $H_2(\Bl_P\mathbb F_0; \Z)$ and $H_2(\Bl_{F(P)}\mathbb F_1; \Z)$. Because the actions of $[f]$ and $[\varphi]$ agree on $N$, Lemma \ref{lem:de-jon-determined} shows there is an isometry
\[
	\iota: (H_2(\Bl_P\mathbb F_0; \Z), Q_{\Bl_P\mathbb F_0}) \to (H_2(\Bl_{F(P)}\mathbb F_1; \Z), Q_{\Bl_{F(P)}\mathbb F_1})
\]
which is also a $\Z[G]$-module isomorphism with $G = \Z/2\Z$ acting by $\langle [f] \rangle$ and $\langle [\varphi]\rangle$ respectively.

Note that $F \circ \tau_0 = \tau \circ F$ where $\tau_0: \mathbb F_0 \to \mathbb F_0$ and $\tau: \mathbb F_1 \to \mathbb F_1$ are diffeomorphisms induced by complex conjugation of the coordinates of $\CP^1$ and $\CP^2$ respectively. Then $\tau_0$ commutes with $\varphi$ and $F(P) \subseteq \RP^2 \subseteq \CP^2$ because $P \subseteq \mathbb F_0$ is pointwise fixed by $\tau_0$. Moreover, $\tau$ commutes with $F\circ \varphi \circ F^{-1}$, meaning that $f$ must commute with $\tau$ as a diffeomorphism of $\Bl_{F(P)}\CP^2 \to \Bl_{F(P)}\CP^2$. 

Consider
\[
	g = \prod_{k=1}^{\frac{n-1}{2}} \left(\RRef_{H- E_1 -E_{2k} - E_{2k+1}} \circ \RRef_{E_{2k} - E_{2k+1}}\right) \in \Mod(M_n).
\]
Note that each reflection in $g$ fixes $H-E_1$. Also,
\begin{align*}
	g(E_{2k+1}) &= \RRef_{H- E_1 -E_{2k} - E_{2k+1}} \circ \RRef_{E_{2k} - E_{2k+1}}(E_{2k+1}) = \RRef_{H- E_1 -E_{2k} - E_{2k+1}}(E_{2k}) = H-E_1-E_{2k+1},\\
	g(E_{2k}) &= \RRef_{H- E_1 -E_{2k} - E_{2k+1}} \circ \RRef_{E_{2k} - E_{2k+1}}(E_{2k}) = \RRef_{H- E_1 -E_{2k} - E_{2k+1}}(E_{2k+1}) = H-E_1-E_{2k}.
\end{align*}
Let $g_1 = g$ with $C_1 = H-E_1\in H_2(M_n; \Z)$ and $v_k^1 = E_{k+1} \in H_2(M_n; \Z)$ for all $1 \leq k \leq n-1$. Lemma \ref{lem:de-jon-determined} implies that $g$, $[f]$, and $[\varphi]$ are conjugate in $\Mod(M_n) \cong \Mod(\Bl_{F(P)}\mathbb F_1) \cong \Mod(\Bl_P \mathbb F_0)$.
\end{proof}
In the next two lemmas, we consider the action of $\varphi_*$ on $H_2(M_n; \Z)$. In this lemma and the rest of the paper, let $H_2(M_n; \Z)^G$ denote the subgroup fixed by $G$, i.e.
\[
	H_2(M_n; \Z)^G := \{c \in H_2(M_n; \Z) : g(c) = c \text{ for all }g \in G \}.
\]
\begin{lem}\label{lem:de-jonquieres-homology}
Let $n \geq 5$ be odd and let $\varphi \in \Aut(\Bl_{P}(\CP^1\times\CP^1))$ be the de Jonqui\'eres involution. Identify $\Bl_{P}(\CP^1\times\CP^1) \cong M_n$ and let $G = \langle [\varphi] \rangle \cong \Z/2\Z \leq \Mod^+(M_{n})$. As a $\Z[G]$-module, $H_2(M_{n};\Z) \cong \Z[G]^{\oplus 2}\oplus  C^{\oplus (n-3)}$ where $C \cong \Z$ as a $\Z$-module and $G$ acts by negation on $C$, and
\[
	H_2(M_n; \Z)^{G} = \Z\{S_2, \, 2S_1-e_1-\dots - e_{n-1} \}.
\]
\end{lem}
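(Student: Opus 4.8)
The plan is to first pin down the full action of $\varphi_*$ on $H_2(M_n;\Z) = \Z\{S_1, S_2, e_1, \dots, e_{n-1}\}$, then exhibit an explicit $\Z$-basis adapted to $G = \langle [\varphi]\rangle$ that splits $H_2(M_n;\Z)$ into the three claimed indecomposable pieces, and finally read off $H_2(M_n;\Z)^G$ from this decomposition. The only piece of the action not already recorded before the lemma is $\varphi_*(S_1)$ (we are given $\varphi_*(S_2) = S_2$ and $\varphi_*(e_i) = S_2 - e_i$). Writing $\varphi_*(S_1) = aS_1 + bS_2 + \sum_i c_i e_i$ and using that $\varphi_*$ is an isometry of $Q_{M_n}$, the relations $Q_{M_n}(\varphi_*S_1, \varphi_*S_2) = 1$, $Q_{M_n}(\varphi_*S_1, \varphi_*e_i) = 0$, and $Q_{M_n}(\varphi_*S_1, \varphi_*S_1) = 0$ force $a = 1$, $c_i = -1$, and $b = m := (n-1)/2$, so that $\varphi_*(S_1) = S_1 + mS_2 - \sum_{i=1}^{n-1} e_i$; one checks this squares to the identity using $2m = n-1$.

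Next I would set $g := \varphi_*$ and verify that the $n+1$ vectors
\[
	e_1,\quad g e_1,\quad S_1,\quad g S_1,\quad e_1 - e_2,\quad e_2 - e_3,\quad \dots,\quad e_{n-3} - e_{n-2}
\]
form a $\Z$-basis of $H_2(M_n;\Z)$. This is a direct recovery argument: $e_1 + g e_1 = S_2$ yields $S_2$; the successive differences $e_k - e_{k+1}$ then produce $e_2, \dots, e_{n-2}$ out of $e_1$; the combination $g S_1 - S_1 = mS_2 - \sum_i e_i$ together with the classes already obtained yields $\sum_{i=2}^{n-1} e_i$ and hence $e_{n-1}$; and $S_1$ is on the list. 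Since these integral operations recover the standard basis, the $n+1$ listed vectors generate $H_2(M_n;\Z)$ and, being the correct number, form a basis. As this basis is partitioned into the two free $G$-orbits $\{e_1, g e_1\}$ and $\{S_1, g S_1\}$ together with the $n-3$ anti-invariant classes $e_k - e_{k+1}$ (each satisfying $g(e_k - e_{k+1}) = -(e_k - e_{k+1})$), it exhibits internal $\Z[G]$-summands $\Z[G]e_1 \cong \Z[G]$, $\Z[G]S_1 \cong \Z[G]$, and $\Z(e_k - e_{k+1}) \cong C$, giving $H_2(M_n;\Z) \cong \Z[G]^{\oplus 2} \oplus C^{\oplus(n-3)}$.

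The invariant lattice is then immediate: $(\Z[G])^G = \Z(1+g)$ while $C^G = 0$, so $H_2(M_n;\Z)^G = \Z(1+g)e_1 \oplus \Z(1+g)S_1$. Computing $(1+g)e_1 = S_2$ and $(1+g)S_1 = 2S_1 + mS_2 - \sum_i e_i$, and replacing the second generator by $(1+g)S_1 - mS_2 = 2S_1 - \sum_i e_i$, gives $H_2(M_n;\Z)^G = \Z\{S_2,\, 2S_1 - e_1 - \dots - e_{n-1}\}$, as claimed; as a consistency check one may instead solve $\varphi_* v = v$ directly. The one genuinely delicate point is the choice of difference vectors in the middle step: a different selection (for instance $e_2 - e_3, \dots, e_{n-2} - e_{n-1}$, omitting $e_1 - e_2$) fails to generate $H_2(M_n;\Z)$ on the nose, spanning instead a sublattice of index $n-2$. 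Including $e_1 - e_2$ and bootstrapping the individual classes $e_i$ from $e_1$ is exactly what makes the basis unimodular; everything else is routine linear algebra over $\Z$.
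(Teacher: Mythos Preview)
Your proof is correct and takes a genuinely different route from the paper's. The paper never computes $\varphi_*(S_1)$ explicitly nor writes down a $\Z[G]$-adapted basis; instead it appeals to the geometry of the fixed set. Since $\Fix(\varphi)$ is a smooth connected curve of genus $(n-3)/2$, the paper invokes two results of Edmonds: one guaranteeing a decomposition $H_2(M_n;\Z) \cong \Z^{\oplus t} \oplus C^{\oplus c} \oplus \Z[G]^{\oplus r}$ for any smooth involution, and one identifying $t+2 = \beta_0(\Fix\varphi) + \beta_2(\Fix\varphi)$ and $c = \beta_1(\Fix\varphi)$. Plugging in the Betti numbers of a genus-$\tfrac{n-3}{2}$ surface gives $t=0$, $c=n-3$, and hence $r=2$ by a rank count. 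For the invariant lattice, the paper simply observes that $S_2$ and $2S_1 - \sum_i e_i$ are fixed, that the rational fixed space is $2$-dimensional, and that $\Q\{S_2,\,2S_1-\sum e_i\}\cap H_2(M_n;\Z) = \Z\{S_2,\,2S_1-\sum e_i\}$.

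Your approach is more elementary and self-contained: it avoids the dependence on Edmonds' theorems and on the identification of the fixed curve, and it gives an explicit internal direct-sum decomposition rather than an abstract isomorphism type. The paper's approach, on the other hand, explains \emph{why} the numbers come out as they do (the $n-3$ copies of $C$ are literally the first mod-$2$ Betti number of the fixed curve), and it generalizes more readily to other involutions where writing down an explicit basis would be less convenient.
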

\begin{proof}
The fixed set of $\varphi$ is a smooth curve $\Gamma$ with a surjective morphism $\Gamma \to \CP^1$ of degree $2$ ramified over $(n-1)$-points (see \cite[Example 3.1]{blanc}); $\Gamma$ is a curve of genus $\frac{n-3}{2}$. There is an isomorphism 
\[
	H_2(M_n; \Z)\cong\Z^{\oplus t} \oplus C^{\oplus c}\oplus \Z[G]^{\oplus r}
\]
as $\Z[G]$-modules for some $t, r, c \in \Z$ by \cite[Proposition 1.1]{edmonds} where $C \cong \Z$ as $\Z$-modules and $\varphi_*$ acts by negation in $C$. By \cite[Proposition 2.4]{edmonds}, $\beta_0(\Gamma) + \beta_2(\Gamma) = t+2$ and $\beta_1(\Gamma) = c$ where $\beta_k(\Gamma)$ is the $k$th mod $2$ Betti number of $\Gamma$. Therefore $t = 0$ and $c = n-3$ so that 
\[
	H_2(M_n; \Z)\cong\Z[G]^{\oplus 2} \oplus C^{\oplus (n-3)}
\]
as $\Z[G]$-modules. As $\Q[G]$-modules,
\[
	H_2(M_n); \Q) \cong (C\otimes\Q)^{\oplus (n-1)} \oplus \Q^{\oplus 2}.
\]
A calculation shows that $S_2$ and $2S_1-e_1-\dots-e_{n-1}$ are fixed by $G$. Therefore,
\[
	H_2(M_n; \Z)^{G}= \Q\{S_2, \, 2S_1-e_1-\dots-e_{n-1} \} \cap H_2(M_n; \Z) = \Z\{S_2, \, 2S_1-e_1-\dots-e_{n-1} \}. \qedhere
\]
\end{proof}

\begin{lem}\label{lem:de-jonquieres-irreducible}
Let $n \geq 5$ be odd. If $\varphi$ and $f$ are de Jonqui\'eres involutions on some $\Bl_{P_0}(\CP^1\times\CP^1) \cong M_n$ and $\Bl_P\CP^2 \cong M_{n}$ respectively then $[\varphi], [f] \in \Mod^+(M_{n})$ are irreducible. 
\end{lem}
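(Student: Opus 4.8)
The plan is to reduce to $[\varphi]$ and then argue by contradiction using the invariant lattice $H_2(M_n;\Z)^G$ from Lemma \ref{lem:de-jonquieres-homology}. First I would note that irreducibility is a conjugacy invariant: if $g$ is reducible via a $g$-invariant orthogonal splitting $H_2(M_n;\Z)=A\oplus B$ and $h$ is any isometry, then $hgh^{-1}$ is reducible via $h(A)\oplus h(B)$. Since Lemma \ref{lem:de-jonquieres-cp2} shows $[f]$ and $[\varphi]$ are conjugate in $\Mod(M_n)$, it suffices to prove $[\varphi]$ is irreducible, after which $[f]$ follows.

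Suppose then, for contradiction, that $g:=[\varphi]$ is reducible, so there is a $g$-invariant orthogonal decomposition $H_2(M_n;\Z)=A\oplus B$ with $B\cong k\langle -1\rangle$ for some $k\ge 1$, where $g|_B$ acts as a signed-permutation involution. Taking $G=\langle g\rangle$-invariants, every invariant class splits along $A\oplus B$, so $H_2(M_n;\Z)^G=A^G\oplus B^G$ is an orthogonal direct sum of index $1$. By Lemma \ref{lem:de-jonquieres-homology} the lattice $H_2(M_n;\Z)^G=\Z\{S_2,\,2S_1-e_1-\cdots-e_{n-1}\}$ carries the form $\left(\begin{smallmatrix}0&2\\2&-(n-1)\end{smallmatrix}\right)$; since $n$ is odd this lattice is even, of signature $(1,1)$ and determinant $-4$. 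Consequently $B^G$ is even and, being negative definite inside a lattice of signature $(1,1)$, has rank at most $1$. Moreover a fixed basis vector of $g|_B$ would be a square-$(-1)$ (hence odd) class in the even lattice $B^G$, which is impossible; so $g|_B$ has no fixed basis vector, and $B^G$ is either $0$ or generated by a single square-$(-2)$ class $b_i\pm b_j$.

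The second ingredient is the canonical class $K$ of $X=\Bl_P(\CP^1\times\CP^1)$: it is fixed by the holomorphic involution $\varphi$, so $K\in H_2(M_n;\Z)^G$; it satisfies $Q_{M_n}(K,K)=9-n$; and a direct check shows that $K^\perp$ is even. If $B^G=0$ then $K\in A$, forcing $B\subseteq K^\perp$, which contradicts the fact that $B$ contains a square-$(-1)$ class while $K^\perp$ is even. If instead $B^G=\Z v$ with $Q(v,v)=-2$, then $A^G=\Z u$ is positive definite of rank $1$, and comparing determinants in the index-$1$ splitting ($\det(A^G)\cdot(-2)=-4$) gives $Q(u,u)=2$; writing $K=pu+qv$ and using $Q(K,K)=2(p^2-q^2)=9-n$ forces $q=0$ in the relevant del Pezzo range (for $n=7$ this reads $p^2-q^2=1$, while for $n=5$ the lattice $H_2(M_n;\Z)^G$ has no square-$(-2)$ class at all, so this case does not arise). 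Hence $K\in A$ again, and the same evenness contradiction applies.

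I expect the rank-$1$ case $B^G\ne 0$ to be the main obstacle: ruling it out requires not only the signature and evenness of $H_2(M_n;\Z)^G$ but the exact determinant computation pinning down $Q(u,u)=2$, combined with the anticanonical norm $Q(K,K)=9-n$, to force $K$ to have no component in $B$. Once $K\in A$ in every case, the clash between the evenness of $K^\perp$ and the square-$(-1)$ classes of $B$ closes the argument, and the opening conjugacy remark transports the conclusion from $[\varphi]$ to $[f]$.
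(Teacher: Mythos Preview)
Your argument is correct for the cases $n=5,7$ (the only odd values in the del Pezzo range), but it takes a genuinely different route from the paper's proof. The paper argues directly by case analysis on how $\varphi_*$ moves a single basis vector $v_1$ of the putative negative-definite summand $B$: the three possibilities $\varphi_*(v_1)=v_1$, $\varphi_*(v_1)=-v_1$, $\varphi_*(v_1)=v_i$ are each ruled out by explicit computations inside $\Z\{S_2\}^\perp=\Z\{S_2,e_1,\dots,e_{n-1}\}$ and the fixed lattice, without ever invoking the canonical class. Your approach instead exploits two structural facts---the evenness of the fixed lattice $H_2(M_n;\Z)^G$ (signature $(1,1)$, determinant $-4$) and the evenness of $K^\perp$---to force $K\in A$ and derive a parity contradiction with the square-$(-1)$ classes of $B$. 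This is more conceptual and avoids the coordinate computations in the paper's Cases~2 and~3.

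One point to flag: the lemma is stated for all odd $n\ge 5$, and the paper's proof works in that generality. Your determinant-and-norm argument in the $B^G\ne 0$ case relies on solving $2(p^2-q^2)=9-n$ to force $q=0$, which you handle only for $n=7$ (and sidestep for $n=5$ via the divisibility-by-$4$ observation). For $n\ge 9$ this Diophantine step no longer forces $q=0$, so as written your proof does not cover those cases. Since the paper only uses $n=5,7$, this does not affect the downstream results, but if you want to match the lemma's stated generality you would need an additional argument there (or simply revert to the paper's direct case analysis, which is insensitive to $n$).
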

\begin{proof}
Suppose for some $1 \leq k \leq n$, there exists an isometric embedding 
\[
	\iota: (H_2(\#k\overline{\CP^2}; \Z), Q_{\#k\overline{\CP^2}}) \hookrightarrow (H_2(M_n; \Z), Q_{M_n})
\]
such that $\varphi_*$ restricts to an automorphism of the image. Let $v_1, \dots, v_k$ denote the orthogonal $\Z$-basis of $H_2(\#k\overline{\CP^2}; \Z)$; note that $Q_{M_n}(\iota(v_i), \iota(v_j)) = -\delta_{ij}$ for all $1 \leq i, j \leq k$. Because $\varphi_*$ acts as an element of $\OO(k)(\Z)$ on the image of $\iota$, 
\[
	\varphi_*(\iota(v_1)) = \iota(v_1), \, -\iota(v_1), \text{ or }\pm \iota(v_i) \text{ for some }i \neq 1.
\]
We address the three cases separately.
\begin{enumerate}
\item Suppose there exists some $c \in H_2(M_n; \Z)$ such that $Q_{M_n}(c,c) = -1$ and $\varphi_*(c) = c$. If $\varphi_*(c) = c$ then $c \in H_2(M_n; \Z)^G$. Compute that the restriction of $Q_{M_n}$ to $H_2(M_n; \Z)^G$ is
\[
	Q_{M_n}|_{H_2(M_n; \Z)^G} = \begin{pmatrix}
	0 & 2 \\
	2 & -(n-1)
	\end{pmatrix} = 2\begin{pmatrix}
	0 & 1 \\1 & -\frac{n-1}{2}
	\end{pmatrix}
\]
with respect to the $\Z$-basis of $H_2(M_n; \Z)^G$ given in Lemma \ref{lem:de-jonquieres-homology}. Therefore, $Q_{M_n}(x,x) \equiv 0 \pmod 2$ for all $x \in H_2(M_n; \Z)^G$. This is a contradiction because $Q_{M_n}(c, c) = -1$. 
\item Suppose there exists some $c \in H_2(M_n; \Z)$ such that $Q_{M_n}(c,c) = -1$ and $\varphi_*(c) = -c$. Then $c \in \Z\{S_2\}^\perp$ because $\varphi_*(S_2) = S_2$. The only elements of $x \in \Z\{S_2\}^\perp$ with $Q_{M_n}(x,x) = -1$ are of the form $x = aS_2 \pm e_k$ for some $a \in \Z$ and $1 \leq k \leq 2m$ because $\Z\{S_2\}^\perp= \Z\{S_2, e_1, \dots, e_{2m}\}$. On the other hand, if
\[
	-aS_2 \mp e_k = \varphi_*(aS_2 \pm e_k) = aS_2 \pm( S_2 - e_k)
\]
then $a \pm 1 = -a$. This is a contradiction since $a \in \Z$.
\item Suppose there exist some $c_1, c_2 \in H_2(M_n; \Z)$ such that $Q_{M_n}(c_k, c_\ell) = -\delta_{k\ell}$ and $\varphi_*(c_1) = c_2$. Then $Q_{M_n}(c_1-c_2, c_1-c_2) = -2$ and $\varphi_*(c_1-c_2) = -(c_1-c_2)$. Since $\varphi_*(S_2) = S_2$,
\[
	c_1-c_2 \in \Z\{S_2\}^\perp=\Z\{S_2, e_1, \dots, e_{n-1}\}.
\]
Then 
\[
	c_1-c_2 = aS_2 +(-1)^{a_k} e_k + (-1)^{a_j}e_j
\] 
some $a, a_k, a_j \in \Z$ and $1 \leq k, j \leq n-1$ because $Q_{M_n}(c_1-c_2, c_1-c_2) = -2$. Moreover, 
\[
	c_1 + c_2 \in H_2(M_n; \Z)^{G} = \Z\{S_2,\, 2S_1 - e_1 - \dots - e_{n-1}\}
\]
where the second equality holds by Lemma \ref{lem:de-jonquieres-homology}. However, for any $A, B \in \Z$,
\[
	(c_1-c_2) + (AS_2 + B(2S_1 - e_1 - \dots - e_{n-1})) \notin 2H_2(M_n; \Z).
\]
This is a contradiction because $(c_1-c_2) + (c_1+c_2) = 2c_1 \in 2H_2(M_n; \Z)$.
\end{enumerate}
Therefore, $[\varphi]$ is irreducible in $\Mod^+(M_n)$. Because $H_2(M_n; \Z)$ as a $\langle f_* \rangle$-module is isomorphic to $H_2(M_n; \Z)$ as a $\langle \varphi_*\rangle$-module by Lemma \ref{lem:de-jonquieres-cp2}, $[f] \in \Mod^+(M_n)$ is irreducible as well.
\end{proof}

\subsubsection{Geiser and Bertini involutions}\label{sec:geiser-bertini}
In this section we describe the Geiser involution $\gamma: X_7 \to X_7$ and the Bertini involution $\beta: X_8 \to X_8$ for any del Pezzo surface $X_n$ diffeomorphic to $M_n$ for $n = 7$ and $8$; we follow the exposition of \cite{bayle--beauville}.

Let $X_7 = \Bl_P\CP^2$ with $P$ a set of $7$ points in general position in $\CP^2$. For any $p \in \CP^2 - P$, the pencil of cubic curves passing through the points $P \cup \{p\}$ has a ninth base point $q$. The map $\gamma: p \mapsto q$ defines a birational map $\gamma: \CP^2 \dashrightarrow \CP^2$ and induces an order $2$ automorphism of $X_7$, which we also denote by $\gamma$. Another way to construct this map is to consider the linear system $\lvert -K_{X_7}\rvert$ which defines a double covering $f: X_7 \to \CP^2$ branched along a smooth curve $C$ of genus $3$. Then $\gamma$ is the nontrivial deck transformation of this branched cover, and the fixed set $\Fix(\gamma)$ in $X_7$ is $C$.

Let $X_8 = \Bl_P\CP^2$ with $P$ a set of $8$ points in general position in $\CP^2$. Consider the linear system $\lvert -2K_{X_8} \rvert$ which defines a double covering $f: X_8 \to Q$ onto a quadric cone $Q \subseteq \CP^3$ branched along the vertex $v$ of $Q$ and a smooth curve $C$ of genus $4$. Then $\beta$ is the nontrivial deck transformation of this branched cover, and the fixed set $\Fix(\gamma)$ in $X_8$ is $C \sqcup \{q\}$, where $q$ is the ninth base point of the pencil of cubics defined by $P$.

By \cite[p. 410]{dolgachev}, the Geiser involution $\gamma$ acts on the subgroup $\mathbb E_7 = \Z\{K_{X_7}\}^\perp$ of $H_2(X_7; \Z) \cong H^2(X_7; \Z) \cong \Pic(X_7)$ by negation and is the product of seven, pairwise-commuting involutions in $W_7$. By \cite[p. 414]{dolgachev}, the Bertini involution $\beta$ acts on the subgroup $\mathbb E_8 = \Z\{K_{X_8}\}^\perp$ of $H_2(X_8;\Z) \cong H^2(X_8; \Z) \cong \Pic(X_8)$ by negation and is the product of eight, pairwise-commuting involutions in $W_8$. In particular, $\gamma$ and $\beta$ fix $K_{X_7}$ and $K_{X_8}$ respectively. 

We conclude this section by noting that $[\gamma]$ and $[\beta]$ are irreducible elements of $\Mod^+(M_n)$ for $n = 7$, $8$ respectively.
\begin{lem}\label{lem:geiser-bertini-irreducible}
The mapping classes $[\gamma] \in \Mod^+(M_7)$ and $[\beta]\in \Mod^+(M_8)$ are irreducible.
\end{lem}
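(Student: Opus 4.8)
The plan is to run the same kind of parity argument used for Lemma \ref{lem:de-jonquieres-irreducible}, now exploiting the fact that $\gamma$ and $\beta$ act almost by $-\Id$. For $n = 7, 8$, any automorphism of a del Pezzo surface $X_n$ fixes the canonical class $K_{X_n} = -3H + E_1 + \dots + E_n$, while the cited computation that $\gamma_*$ (resp. $\beta_*$) is a product of $n$ pairwise-commuting reflections in $W_n \cong W(E_n)$ forces it to act as $-\Id$ on $\mathbb{E}_n = (\Z K_{X_n})^\perp$. Thus the $(+1)$-eigenspace of $\gamma_*$ (resp. $\beta_*$) on $H_2(M_n;\Z)\otimes\Q$ is the line $\Q K_{X_n}$ and the $(-1)$-eigenspace is $\mathbb{E}_n \otimes \Q$. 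I would record two lattice facts: $\mathbb{E}_n$ is even (stated earlier in the excerpt), and, since $K_{X_n}$ is primitive, the fixed sublattice of $\gamma_*$ (resp. $\beta_*$) is exactly $\Z K_{X_n}$, with $Q_{M_n}(K_{X_n}, K_{X_n}) = 9 - n$.

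Next I reduce reducibility to a statement about $(-1)$-classes. If $[\gamma]$ (resp. $[\beta]$) were reducible, there would be a $\gamma_*$-invariant (resp. $\beta_*$-invariant) sublattice $L \cong (H_2(\#k\overline{\CP^2};\Z), Q_{\#k\overline{\CP^2}})$ with $k \geq 1$, so $\gamma_*$ restricts to an element of $\OO(k)(\Z)$ on $L$, i.e. a signed permutation of an orthogonal basis $v_1, \dots, v_k$ with $Q_{M_n}(v_i,v_i) = -1$. In particular $\gamma_*(v_1) = \varepsilon v_j$ for some sign $\varepsilon$ and index $j$, which splits into three cases to rule out: $\gamma_*(v_1) = v_1$, $\gamma_*(v_1) = -v_1$, or $\gamma_*(c_1) = c_2$ for two orthogonal classes $c_1, c_2$ of square $-1$ (taking $c_1 = v_1$ and $c_2 = \varepsilon v_j$ when $j \neq 1$).

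Finally, each case contradicts the eigenspace and parity data. A fixed class lies in $\Z K_{X_n}$, so $Q_{M_n}(c,c) = (9-n)m^2 \geq 0$ for some $m \in \Z$ and cannot equal $-1$. A negated class lies in the even lattice $\mathbb{E}_n$, so $Q_{M_n}(c,c)$ is even and cannot equal $-1$. In the swap case, $c_1 + c_2$ is fixed, hence lies in $\Z K_{X_n}$, so $Q_{M_n}(c_1 + c_2, c_1 + c_2) = (9-n)m^2 \geq 0$; but $Q_{M_n}(c_1 + c_2, c_1 + c_2) = -1 + 0 - 1 = -2$, a contradiction. Hence no such $L$ exists, and both $[\gamma] \in \Mod^+(M_7)$ and $[\beta] \in \Mod^+(M_8)$ are irreducible.

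I expect the only point requiring genuine care to be the first paragraph: correctly extracting the eigenspace decomposition from Dolgachev's statement (the action is $-\Id$ on $\mathbb{E}_n$ and the identity on $\Z K_{X_n}$, not $-\Id$ on all of $H_2(M_n;\Z)$), together with the evenness of $\mathbb{E}_n$ and the primitivity of $K_{X_n}$. Once these are in place the three cases are immediate parity and positivity checks, and the argument is uniform in $n = 7, 8$ despite $Q_{M_n}(K_{X_n},K_{X_n})$ being $2$ versus $1$.
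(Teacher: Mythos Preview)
Your proposal is correct and follows essentially the same approach as the paper: identify the fixed sublattice as $\Z K_{X_n}$ and the $(-1)$-eigenlattice as $\mathbb{E}_n$, then rule out the three cases $g(c)=c$, $g(c)=-c$, $g(c_1)=c_2$ via positivity of $(9-n)m^2$ and evenness of $\mathbb{E}_n$. The paper's computation that $H_2(M_n;\Z)^{G_n}=\Z K_{X_n}$ is done by rational decomposition and intersection for $n=7$ and by unimodularity of $\Z K_{X_8}$ for $n=8$, while you phrase it uniformly via primitivity of $K_{X_n}$; this is a cosmetic difference only.
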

\begin{proof}
Let $G_7 = \langle [\gamma] \rangle$ and $G_8 = \langle [\beta]\rangle$. 
\begin{enumerate}
	\item If $v \in H_2(M_7; \Z)$ is fixed by $G_7$ then consider $H_2(M_7; \Q)^{G_7}$, the subspace of $H_2(M_7; \Q)$ that is pointwise fixed by $G_7$. There is a decomposition of $H_2(M_7; \Q)$ as a $\Q[G_7]$-module
	\[
		H_2(M_7; \Q) = \Q\{K_{X_7}\} \oplus \mathbb E_7 \otimes \Q
	\]
	and $H_2(M_7; \Q)^{G_7} = \Q\{K_{X_7}\}$. Taking intersections with $H_2(M_7; \Z)$ on both sides shows that $H_2(M_7; \Z)^{G_7} = \Z\{K_{X_7}\}$. 
	\item The restriction of $Q_{M_8}$ to $\Z\{K_{X_8}\}$ is unimodular so there is an orthogonal decomposition of $H_2(M_8; \Z)$ as a $\Z[G_8]$-module as 
	\[
		H_2(M_8; \Z) \cong \mathbb E_8 \oplus \Z\{K_{X_8}\} \cong C^{\oplus 8} \oplus \Z.
	\] 
\end{enumerate}
In both cases, $H_2(M_n; \Z)^{G_n} = \Z\{K_{X_n}\}$. If $g = [\gamma]$ or $g = [\beta]$ is reducible, there are two possibilities:
\begin{enumerate}
	\item There exists some $c \in H_2(M_n; \Z)$ such that $Q_{M_n}(c,c) = -1$ and $g(c) = \pm c$. If $g(c) = c$ then $c = aK_{X_n}$ for some $a \in \Z$. However,
	\[
		Q_{M_n}(aK_{X_n}, aK_{X_n}) = a^2(9-n) \geq 0.
	\]
	Because $Q_{M_n}(c,c) = -1$, this is a contradiction. If $g(c) = -c$ then $c \in \mathbb E_n$. This is a contradiction because $\mathbb E_n$ is an even lattice.
	\item There exist some $c_1, c_2 \in H_2(M_n; \Z)$ such that $Q_{M_n}(c_k, c_\ell) = -\delta_{k\ell}$ and $g(c_1) = c_2$. Then $c_1+c_2 \in H_2(M_n; \Z)^{G_n}$, meaning that $c_1+c_2 = aK_{X_n}$ for some $a \in \Z$ and
	\[
		Q_{M_n}(aK_{X_n}, aK_{X_n}) = a^2(9-n) \geq 0.
	\]
	Because $Q_{M_n}(c_1+c_2, c_1+c_2) = -2$, this is a contradiction. \qedhere
\end{enumerate}
\end{proof}

\subsection{Involutions in $\Mod^+(M_n)$ for $5 \leq n \leq 8$}\label{sec:n58}

With the discussion of conjugacy classes of involutions in the plane Cremona group above, we are ready to continue analyzing the cases $5 \leq n \leq 8$. The next lemma gives a criterion for reducibility of mapping classes $g \in \Mod(M_n)$. 

\begin{lem}\label{lem:lattices}
Let $g \in \Mod(M_n)$. Then $g$ is reducible in the following cases:
\begin{enumerate}
\item if $n \leq 7$ and there exists $c \in H_2(M_n; \Z)$ with $Q_{M_n}(c,c) = 1$ such that $g(c) = c$; \label{lem:lattices1}
\item if $n \leq 8$ and there exists $c_1, c_2 \in H_2(M_n; \Z)$ with $Q_{M_n}(c_k, c_\ell) = 1-\delta_{k\ell}$ such that $g(c_1) = c_2$ or $g(c_k) = c_k$ for $k = 1, 2$;\label{lem:lattices3}
\item if $n \leq 9$ and there exists $c \in H_2(M_n; \Z)$ with $Q_{M_n}(c,c) = -1$ such that $g(c) = c$. \label{lem:lattices2}
\end{enumerate}
\end{lem}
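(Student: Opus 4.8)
The plan is to prove all three cases by a single mechanism: exhibit a $g$-invariant orthogonal splitting of $(H_2(M_n;\Z), Q_{M_n})$ into a del Pezzo summand $(H_2(N;\Z), Q_N)$ and a negative-definite diagonal summand $k\langle -1\rangle = (H_2(\#k\overline{\CP^2};\Z), Q_{\#k\overline{\CP^2}})$ with $k > 0$, and then invoke Definition \ref{defn:irreducibility} directly, equivalently placing $g$ in the image of a standard inclusion from Definition \ref{defn:standard-inclusion}. The engine behind each splitting is the elementary fact that a nondegenerate unimodular sublattice $L \subseteq (H_2(M_n;\Z), Q_{M_n})$ is a primitive orthogonal direct summand, $H_2(M_n;\Z) = L \oplus L^\perp$ with $L^\perp$ again unimodular, and that any $g$ stabilizing $L$ setwise automatically preserves $L^\perp$. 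So in each case I first produce the relevant $g$-invariant unimodular $L$, and then identify $L$ and $L^\perp$ with del Pezzo forms using the classification of unimodular lattices.

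For case (\ref{lem:lattices2}) I take $L = \Z\{c\}$ with $Q_{M_n}(c,c) = -1$; since $c$ is primitive and $L \cong \langle -1\rangle = (H_2(\overline{\CP^2};\Z), Q_{\overline{\CP^2}})$ is unimodular, it splits off and serves as the diagonal summand with $k = 1$, while $g(c) = c$ makes $L$ and hence $L^\perp$ invariant. It then remains to recognize $L^\perp$, a unimodular lattice of signature $(1, n-1)$, as a del Pezzo form: for $n = 1$ it is $\langle 1 \rangle = Q_{M_0}$, and for $n \geq 2$ it is indefinite and therefore determined by its rank, signature and type, giving $Q_{M_{n-1}}$ when $L^\perp$ is odd and the hyperbolic form $Q_{M_*}$ when $L^\perp$ is even (which can only occur when $2 - n \equiv 0 \pmod 8$). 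In every case $L^\perp \cong (H_2(N;\Z), Q_N)$ for a del Pezzo manifold $N$, so $g$ is reducible.

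Cases (\ref{lem:lattices1}) and (\ref{lem:lattices3}) run in parallel, now with the diagonal factor appearing as the complement rather than as $L$. In case (\ref{lem:lattices1}) I set $L = \Z\{c\}$ with $Q_{M_n}(c,c) = 1$, so $L \cong \langle 1 \rangle = Q_{M_0}$ and $L^\perp$ is negative-definite unimodular of rank $n$; in case (\ref{lem:lattices3}) I set $L = \Z\{c_1, c_2\} \cong Q_{M_*}$, a hyperbolic plane since the $c_k$ are isotropic with $Q_{M_n}(c_1, c_2) = 1$, and $L^\perp$ is negative-definite unimodular of rank $n-1$. The hypotheses make $L$ invariant: $g(c) = c$ in case (\ref{lem:lattices1}), and in case (\ref{lem:lattices3}) $g$ stabilizes the pair $\{c_1, c_2\}$, fixing them when $g(c_k) = c_k$ and swapping them when $g(c_1) = c_2$. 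The decisive input is the classification of definite unimodular lattices of small rank: a negative-definite unimodular lattice of rank at most $7$ is diagonal, hence isometric to $(\mathrm{rank})\cdot\langle -1\rangle = (H_2(\#(\mathrm{rank})\overline{\CP^2};\Z), Q_{\#(\mathrm{rank})\overline{\CP^2}})$. This is exactly why case (\ref{lem:lattices1}) requires $n \leq 7$ and case (\ref{lem:lattices3}) requires $n \leq 8$, so that the complement has rank at most $7$. Taking $N = M_0$ and $k = n$ in case (\ref{lem:lattices1}), and $N = M_*$ and $k = n-1$ in case (\ref{lem:lattices3}), the splitting exhibits $g$ as reducible.

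The one genuine obstacle is the lattice-theoretic bookkeeping, which rests on three standard inputs: that a nondegenerate unimodular sublattice splits off orthogonally; the classification of indefinite unimodular forms by rank, signature and type (used in case (\ref{lem:lattices2})); and the uniqueness of definite unimodular forms in rank at most $7$ (used in cases (\ref{lem:lattices1}) and (\ref{lem:lattices3})). The last of these is precisely what forces the rank restrictions: it fails at rank $8$ because of $E_8$, which is the same phenomenon responsible for the irreducible Geiser and Bertini involutions on $M_7$ and $M_8$ examined in Section \ref{sec:geiser-bertini}. Everything else is a routine identification of small unimodular lattices of prescribed signature and parity with the del Pezzo intersection forms $Q_{M_m} = \langle 1 \rangle \oplus m\langle -1 \rangle$ and $Q_{M_*}$.
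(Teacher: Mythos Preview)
Your argument is correct and follows essentially the same route as the paper's proof: in each case one splits off the unimodular sublattice generated by the given class(es), invokes the appropriate classification (uniqueness of negative-definite unimodular forms of rank $\leq 7$ for cases (\ref{lem:lattices1}) and (\ref{lem:lattices3}), and the classification of indefinite unimodular forms for case (\ref{lem:lattices2})), and reads off the required del Pezzo summand. Your presentation is slightly more explicit about why the rank bounds are sharp, but the mathematical content is identical.
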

\begin{proof}
\begin{enumerate}
	\item The restriction of the intersection form $Q_{M_n}$ to $\Z\{c\}^\perp$ is unimodular and negative-definite by \cite[Lemma 1.2.12]{gompf--stipsicz}. For $n \leq 7$, there is only one unimodular and negative-definite symmetric form of rank $n$, and so there is an isometry 
	\[
		\iota: H_2(\CP^2; \Z) \oplus H_2(\#n \overline{\CP^2}; \Z) \to H_2(M_n; \Z)
	\]
	such that $\iota(H) = c$ and the image of $H_2(\#n \overline{\CP^2}; \Z)$ under $\iota$ is $\Z\{c\}^\perp$. Then $g$ is contained in the image of $\iota_*$ because $g$ preserves $\Z\{c\}^\perp$ and $\Z\{c\}$. Therefore, $g$ is reducible.

	\item The restriction of the intersection form $Q_{M_n}$ to $\Z\{c_1, c_2\}^\perp$ is unimodular and negative-definite with rank $n-1$ by \cite[Lemma 1.2.12]{gompf--stipsicz}. Because $n-1 \leq 7$, there is only one unimodular and negative-definite symmetric form of rank $n-1$. Therefore, $g$ is reducible by the same reasoning as the proof of (\ref{lem:lattices1}). 

	\item The restriction of the intersection for $Q_{M_n}$ to $\Z\{c\}^\perp$ is unimodular and indefinite with signature $(1,n-1)$ by \cite[Lemma 1.2.12]{gompf--stipsicz}. If $n \neq 2$ then the signature $\sigma(Q_{M_n}|_{\Z\{c\}^\perp}) = 2-n$ is not divisible by $8$, so the lattice $(\Z\{c\}^\perp, Q_{M_n}|_{\Z\{c\}^\perp})$ is odd by \cite[Lemma 1.2.20]{gompf--stipsicz}. There is an isometry 
	\[
		\iota: H_2(M_{n-1}; \Z) \oplus H_2(\overline{\CP^2}; \Z) \to H_2(M_n; \Z)
	\]
	such that the image of $H_2(M_{n-1};\Z)$ under $\iota$ is $\Z\{c\}^\perp$ and the image of $H_2(\overline{\CP^2}; \Z)$ under $\iota$ is $\Z\{c\}$ by \cite[Theorem 1.2.21]{gompf--stipsicz}. If $n = 2$ then the signature $\sigma(Q_{M_n}|_{\Z\{c\}^\perp}) = 0$. So $(\Z\{c\}^\perp, Q_{M_n}|_{\Z\{c\}^\perp}) \cong (H_2(M; \Z), Q_M)$ for $M = M_*$ or $M_1$ because these are the only two indefinite lattices of rank $2$. There is an isometry
	\[
		\iota: H_2(M; \Z) \oplus H_2(\overline{\CP^2}; \Z) \to H_2(M_2; \Z)
	\]
	such that the image of $H_2(M; \Z)$ under $\iota$ is $\Z\{c\}^\perp$ and the image of $H_2(\overline{\CP^2}; \Z)$ under $\iota$ is $\Z\{c\}$ by \cite[Theorem 1.2.21]{gompf--stipsicz}. 

	Therefore, $g$ is contained in the image of $\iota_*$, so $g$ is reducible. \qedhere
\end{enumerate}
\end{proof}

One way to determine all conjugacy classes of $W_5 = W(D_5)$ of order $2$ is to consult \cite[Table 3]{carter}, but we apply \cite[Lemma 5]{carter} instead. To do so, we consider the \emph{roots} of $\mathbb E_n$. For each $5 \leq n \leq 8$, let
\[
	\mathcal R_n := \{e \in \mathbb E_n : Q_{M_n}(e,e) = -2\}. 
\]
Then $\mathcal R_n$ is finite by \cite[Proposition 8.2.7]{dolgachev}. One can check that $\mathcal R_n$ is a root system for the Euclidean space $\mathbb E_n \otimes \R$ with the bilinear form $-Q_{M_n}$ extended $\R$-linearly. Any element of $\mathcal R_n$ is called a \emph{root} of $\mathbb E_n$. For $n = 5$, we first determine the maximal set of mutually orthogonal roots of $\mathbb E_5$, up to $W_5$-action. 
\begin{lem}
Up to $W_5$-action and up to sign, the unique maximal set of mutually orthogonal roots of $\mathbb E_5$ is
\begin{equation}\label{eqn:maximal-ortho}
	S = \{H-E_1-E_2-E_3,\, H-E_1-E_4-E_5, \, E_2-E_3, \, E_4-E_5\}.
\end{equation}
\end{lem}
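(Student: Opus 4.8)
The plan is to transport the problem into the root system $D_5$, where the orthogonality relations become transparent combinatorics, and then read the answer back in the del Pezzo basis.

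\textit{Setup.} First I would record that the roots of $\mathbb E_5$ — the classes $v \in \mathbb E_5$ with $Q_{M_5}(v,v) = -2$ — are exactly the $20$ classes $\pm(E_i - E_j)$ with $i \neq j$ and the $20$ classes $\pm(H - E_i - E_j - E_k)$ with $i,j,k$ distinct, giving $40$ roots, matching the root count of $D_5$. The simple roots listed in Subsection~\ref{sec:En} form a Dynkin diagram of type $D_5$: the chain $E_1 - E_2,\ E_2 - E_3,\ E_3 - E_4$ forks at $E_3 - E_4$ into the two extra nodes $E_4 - E_5$ and $H - E_1 - E_2 - E_3$. I would then fix the isometry $\Psi\colon (\mathbb E_5, -Q_{M_5}) \to (D_5\text{-root lattice}, \langle\cdot,\cdot\rangle) \subseteq \R^5$ induced by matching these simple roots with the standard simple roots of $D_5$, so that $\Psi(E_i - E_j) = e_i - e_j$ and $\Psi(H - E_1 - E_2 - E_3) = e_4 + e_5$. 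Under $\Psi$ the full root system maps onto $\{\pm e_i \pm e_j\}$, the action of $W_5 \cong W(D_5)$ becomes the standard action by signed permutations, and the subgroup $S_5 \leq W_5$ permuting $E_1, \dots, E_5$ becomes the standard $S_5$ permuting $e_1, \dots, e_5$.

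\textit{Combinatorial classification (the crux).} Working in $D_5$, I would attach to each root $\pm e_i \pm e_j$ its index pair $\{i,j\}$. A direct check of inner products shows that two distinct roots are orthogonal exactly when their index pairs are disjoint, or when they are $e_i - e_j$ and $e_i + e_j$ for a common pair $\{i,j\}$; in this ``parallel'' case a single pair supports at most two mutually orthogonal roots. Hence any orthogonal set of roots is supported on a partial matching of $\{1,\dots,5\}$ with each edge used at most twice, so with $p$ distinct pairs it has at most $2p$ elements; since the pairs are disjoint, $2p \leq 5$ forces $p \leq 2$, whence every orthogonal set has at most $4$ elements and a set of size $4$ must consist of two disjoint pairs, each contributing both $e_i - e_j$ and $e_i + e_j$. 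Up to sign such a set is determined by its unordered pair of disjoint index pairs, and $S_5$ acts transitively on these, so all size-$4$ orthogonal sets lie in a single $W_5$-orbit up to sign. (The same extension argument shows every orthogonal set sits inside one of size $4$, so \emph{maximal} and \emph{maximum} coincide here.) I expect this classification — in particular pinning down the transitivity so as to get genuine \emph{uniqueness up to $W_5$ and sign}, not merely the bound on the size — to be the main point.

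\textit{Conclusion.} Finally I would verify that $S$ is such a maximal set. A direct computation confirms that the four classes in $S$ have self-intersection $-2$ and are mutually orthogonal, and tracing them through $\Psi$ gives $\Psi(S) = \{e_2 - e_3,\ e_2 + e_3,\ e_4 - e_5,\ e_4 + e_5\}$, i.e. the two disjoint pairs $\{2,3\}$ and $\{4,5\}$, each with both roots. This is the standard size-$4$ configuration (after relabelling indices), so $S$ realizes the unique $W_5$-orbit up to sign, which proves the lemma.
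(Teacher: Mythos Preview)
Your argument is correct and complete, but it proceeds quite differently from the paper. The paper stays in the del Pezzo basis throughout: it quotes the explicit list of roots $E_i-E_j$ and $\pm(H-E_i-E_j-E_k)$, uses transitivity of $W_5$ on roots to normalize the first element of $S$ to $H-E_1-E_2-E_3$, and then does a short case split on whether a second root of the form $H-E_i-E_j-E_k$ appears, computing the few orthogonality constraints by hand. Your approach instead passes through the explicit isometry to the standard $D_5$ lattice and reduces everything to the combinatorics of index pairs in $\{1,\dots,5\}$: orthogonality becomes ``equal or disjoint pairs,'' the size bound $\lvert S\rvert\le 4$ drops out of $2p\le 5$, and uniqueness up to $W_5$ and sign is just transitivity of $S_5$ on pairs of disjoint $2$-subsets. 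The paper's route is quicker to set up and more self-contained; yours is more structural, makes the bound and the uniqueness simultaneous rather than a case analysis, and would generalize cleanly to $D_n$ for other $n$.
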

\begin{proof}
Let $S$ be a maximal set of mutually orthogonal roots of $\mathbb E_5$. By \cite[Proposition 8.2.7]{dolgachev}, the roots of $\mathbb E_5$ are of the form $E_i-E_j$ and $\pm(H-E_i-E_j-E_k)$ for $i, j, k$ distinct. The group $W_5$ acts transitively on the roots by \cite[Proposition 8.2.17]{dolgachev}, so we may assume that $\alpha_1 = H-E_1-E_2-E_3 \in S$. 
\begin{enumerate}
	\item Suppose $\alpha_2 = H-E_i-E_j-E_k \in S$ with $\alpha_1 \neq \alpha_2$. Because $Q_{M_5}(\alpha_1, \alpha_2) = 0$, up to relabeling the vectors $E_i$, we may assume that $\alpha_2 = H-E_1-E_4-E_5$. No other roots of the form $H-E_i-E_j-E_k$ are orthogonal to both $\alpha_1$ and $\alpha_2$.

	If $\alpha_3 = E_i-E_j \in S$, then $\{i,j\} = \{2,3\}$ or $\{4,5\}$. Since $E_2-E_3$ and $E_4-E_5$ are orthogonal, we see that the set $S$ as given in (\ref{eqn:maximal-ortho}) is the unique maximal set containing multiple roots of the form $H - E_1-E_2-E_3$.

	\item Suppose there are no other roots of the form $H-E_i-E_j-E_k \in S$. If $E_i - E_j \in S$, either $\{i,j\} = \{4,5\}$ of $\{i,j\} \subseteq \{1,2,3\}$. Without loss of generality, we may assume that $E_2-E_3, E_4-E_5 \in S$. No other roots of the form $E_i-E_j$ are orthogonal to all elements of $S$. This set $S$ is then contained in the maximal set given in (\ref{eqn:maximal-ortho}).  \qedhere
\end{enumerate}
\end{proof}
\begin{prop}\label{prop:n5}
There is exactly one conjugacy class of irreducible involutions in $\Mod^+(M_5)$, and the elements of this conjugacy class are realized by de Jonqui\'eres involutions of (algebraic) degree $3$.
\end{prop}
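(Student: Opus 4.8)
The plan is to combine Corollary~\ref{cor:involution-classification} with Carter's classification. By Corollary~\ref{cor:involution-classification}, any irreducible involution of $\Mod^+(M_5)$ is conjugate into $W_5 = W(\mathbb{E}_5)\cong W(D_5)$, so it suffices to classify involutions of $W_5$ up to conjugacy and determine which are irreducible. By \cite[Lemma 5]{carter}, every involution of $W_5$ is a product $\prod_{\alpha\in T}\RRef_\alpha$ of reflections in a set $T$ of mutually orthogonal roots of $\mathbb{E}_5$. Since each such $T$ extends to a maximal orthogonal set and the previous lemma shows that the unique maximal orthogonal set, up to the $W_5$-action and sign, is the four-element set $S$ of \eqref{eqn:maximal-ortho}, I may conjugate so that $T\subseteq S$. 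Computing the $W_5$-orbits of subsets of $S$ (equivalently, reading off the signed-cycle types in $W(D_5)$) then yields exactly five conjugacy classes of involutions: one each whose $(-1)$-eigenspace has dimension $1$, $3$, and $4$, and two of dimension $2$.

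First I would treat the top class $\lvert T\rvert = 4$, i.e. $T = S$. By Lemma~\ref{lem:de-jonquieres-cp2}(3), the mapping class
\[
	\prod_{k=1}^{2}\left(\RRef_{H-E_1-E_{2k}-E_{2k+1}}\circ \RRef_{E_{2k}-E_{2k+1}}\right)=\prod_{\alpha\in S}\RRef_\alpha
\]
is conjugate to a de Jonqui\'eres involution on some $\Bl_P(\CP^1\times\CP^1)\cong M_5$; here $\lvert P\rvert = n-1 = 4 = 2m$, forcing the algebraic degree $m+1 = 3$. Lemma~\ref{lem:de-jonquieres-irreducible} shows this class is irreducible. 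Thus the remaining content of the proposition is that this is the \emph{only} irreducible class and that it is realized by a degree-$3$ de Jonqui\'eres involution, which the above provides.

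It then remains to show that every involution with $\lvert T\rvert \leq 3$ is reducible, which I would verify class-by-class using the three criteria of Lemma~\ref{lem:lattices}. For the single reflection ($\lvert T\rvert = 1$) and for the $\lvert T\rvert = 2$ class represented by $\{E_2-E_3,\,E_4-E_5\}$, the fixed lattice visibly contains $H$ with $Q_{M_5}(H,H)=+1$, so criterion~(1) applies at once. The main obstacle is the remaining two classes, where a short computation shows the fixed lattice is \emph{even} and hence contains no class of self-intersection $\pm 1$, so criteria~(1) and~(3) fail: these are the $\lvert T\rvert = 2$ class represented by $\{E_2-E_3,\,H-E_1-E_4-E_5\}$ and the $\lvert T\rvert = 3$ class represented by $\{H-E_1-E_2-E_3,\,H-E_1-E_4-E_5,\,E_2-E_3\}$.

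For these two classes I would invoke the isotropic-pair part of criterion~(2), exhibiting primitive isotropic classes $c_1,c_2$ with $Q_{M_5}(c_1,c_2)=1$ that are either both fixed or interchanged by the involution $g$. Concretely, for the $\lvert T\rvert = 2$ class one checks that $c_1 = E_4 - H$ and $c_2 = E_5 - H$ are both fixed; for the $\lvert T\rvert = 3$ class, where the fixed lattice has all pairings even and so admits no fixed hyperbolic pair, one instead checks that $g$ interchanges $c_1 = E_5 - H$ and $c_2 = -2H + E_1 + E_2 + E_3 + E_5$. In each case criterion~(2) gives reducibility. Finding these explicit isotropic pairs is the crux of the argument, precisely because the naive search for a fixed $\pm 1$ class fails here. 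Assembling the four reducibility verifications with the irreducibility of $T = S$ proves that there is exactly one conjugacy class of irreducible involutions in $\Mod^+(M_5)$, realized by the degree-$3$ de Jonqui\'eres involution.
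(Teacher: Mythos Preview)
Your proof is correct and follows essentially the same strategy as the paper: reduce to $W_5$ via Corollary~\ref{cor:involution-classification}, use Carter's Lemma~5 together with the maximal orthogonal set $S$ of \eqref{eqn:maximal-ortho}, and then treat each conjugacy class of involutions case by case, with the $|T|=4$ class handled exactly as in the paper (Lemmas~\ref{lem:de-jonquieres-cp2} and~\ref{lem:de-jonquieres-irreducible}) and the $|T|=3$ class handled by the same swapped isotropic pair up to sign.

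The only genuine difference is your treatment of the ``even'' $|T|=2$ class represented by $\{E_2-E_3,\,H-E_1-E_4-E_5\}$. The paper observes that a $W_5$-conjugate of this set, namely $\{H-E_1-E_2-E_3,\,E_4-E_5\}$, lies inside $\{H-E_1-E_2-E_3,\,E_2-E_3,\,E_4-E_5\}$ and hence the involution visibly sits in the image of the standard inclusion $\Mod(M_3)\times\Mod(\#2\overline{\CP^2})\hookrightarrow\Mod(M_5)$, giving reducibility directly without any lattice search. You instead exhibit the fixed hyperbolic pair $c_1=E_4-H$, $c_2=E_5-H$ and invoke Lemma~\ref{lem:lattices}(\ref{lem:lattices3}). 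Both arguments are valid; the paper's is slightly quicker since it avoids the need to first check that the fixed lattice is even, while yours has the virtue of staying entirely within the framework of Lemma~\ref{lem:lattices}.
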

\begin{proof}
The group $W_5$ is the Weyl group $W(D_5) \cong (\Z/2\Z)^4 \rtimes S_5$. Consider various subsets $I$ of the maximal mutually orthogonal set of $S$ as in (\ref{eqn:maximal-ortho}), up to $W_5$-orbits. For each $I \subseteq S$, consider $g = \prod_{\alpha \in I} \RRef_\alpha \in W_5$. 
\begin{enumerate}
	\item If $I = S$ then
	\[
		g = (\RRef_{H-E_1-E_2-E_3}\circ \RRef_{E_2-E_3}) \circ (\RRef_{H-E_1-E_4-E_5} \circ \RRef_{E_4-E_5}).
	\]
	By Lemmas \ref{lem:de-jonquieres-cp2} and \ref{lem:de-jonquieres-irreducible}, $g$ is irreducible and realized by de Jonqui\'eres involutions of degree $3$.
	\item If $I = \{H-E_1-E_2-E_3, \, H-E_1-E_4-E_5\}$ then $g(c)=c$ with $c = 2H-E_1-E_3-E_5$ because $c \in \Z\{I\}^\perp$. Because $Q_{M_5}(c,c) = 1$, Lemma \ref{lem:lattices}(\ref{lem:lattices1}) implies that $g$ is reducible.

	\item If $I \subseteq \{H-E_1-E_2-E_3, \, E_2-E_3, \, E_4-E_5\}$, then $g$ is in the image of the standard inclusion
	\[
		\iota_*: \Mod(M_3)\times \Mod(\#2\overline{\CP^2}) \hookrightarrow \Mod(M_5).
	\]
	Therefore, $g$ is reducible.

	\item If $I = \{H-E_1-E_2-E_3, \, H-E_1-E_4-E_5, \, E_2-E_3\}$ then consider
	\[
		\alpha_1 := H-E_5 \in \Z\{H-E_1-E_4-E_5, \, E_2-E_3\}^\perp.
	\]
	Let $\alpha_2 := 2H-E_1-E_2-E_3-E_5$ and compute that 
	\[
		g(\alpha_1) = \RRef_{H-E_1-E_2-E_3}(\alpha_1) = \alpha_2.
	\]
	By Lemma \ref{lem:lattices}(\ref{lem:lattices3}) , $g$ is reducible because $g(\alpha_1) = \alpha_2$.
\end{enumerate}
Any element of order $2$ in $W_5$ can be written as a product of reflections about mutually orthogonal roots by \cite[Lemma 5]{carter}. Hence, we have shown that there is a unique irreducible conjugacy class of order $2$ in $W_5$ and this class is realized by a de Jonqui\'eres involution of degree $3$. Corollary \ref{cor:involution-classification} then implies that this is the only irreducible conjugacy class of order $2$ in $\Mod^+(M_5)$.
\end{proof}

For the rest of this paper, we use the list of conjugacy classes of each $W_n$ given in \cite{carter}. The classification of \cite{carter} is stated in terms of a graph $\Gamma$ (called \emph{Carter graph}) that one can associate to each conjugacy class $C$ of $W_n$ (cf. \cite[p. 6]{carter}). We briefly describe the Carter graph of a conjugacy class $C$ of order $2$ here:

First, let $g \in W_n$ be any element and let $C$ be its conjugacy class. The graph $\Gamma$ of $C$ depends a priori on a factorization $g = w_1w_2$ where $w_1, w_2\in W_n$ each have order $2$. To construct $\Gamma$, consider a set of roots $\{v_i : 1 \leq i \leq k+h\} \subseteq \mathbb E_n$ such that
\[
	w_1 = \prod_{i=1}^{k}\RRef_{v_i}, \qquad w_2 = \prod_{i=k+1}^{k+h}\RRef_{v_i}
\]
and 
\begin{enumerate}
\item $V_1 \cap V_2 = 0$, where $V_i \subseteq \mathbb E_n \otimes \R$ denotes the $(-1)$-eigenspace of $w_i$ for $i = 1, 2$,
\item the roots in $R_1 := \{v_1, \dots, v_k\}$, which span $V_1$, are mutually orthogonal (with respect to $\R$-bilinear extension of $Q_{M_n}$ restricted to $\mathbb E_n$), and
\item the roots in $R_2 := \{v_{k+1}, \dots, v_{k+h}\}$, which span $V_2$, are mutually orthogonal.
\end{enumerate}
The existence of such a factorization of $g$ is guaranteed by \cite[Proposition 38, Corollary (ii)]{carter}. Finally, let $\Gamma$ be the graph with vertex set $\{v_1, \dots, v_{k+h}\}$ and with $\left(\frac{4 Q_{M_n}(v_i, v_j)^2}{Q_{M_n}(v_i, v_i) Q_{M_n}(v_j, v_j)}\right)$-many edges between the vertices $v_i$ and $v_j$. 

Now assume that $g$ has order $2$ and consider any factorization $g = w_1w_2$ as above. Because $w_1$ and $w_2$ each have order $2$ and commute, they are simultaneously diagonalizable in $\GL(\mathbb E_n \otimes \R)$. Hence there is an orthogonal decomposition $V_2 \cong (V_2 \cap V_1) \oplus (V_2\cap V_1^\perp)$ since $V_1^\perp$ is precisely the $(1)$-eigenspace of $w_1$ in $\mathbb E_n \otimes \R$. Because $V_2 \cap V_1 = 0$, there is an inclusion $V_2 \subseteq V_1^\perp$. Finally this shows that the roots in $R_1 \cup R_2$ are mutually orthogonal. Moreover, the number of roots in $R_1 \cup R_2$ is equal to the dimension of the $(-1)$-eigenspace of $g$ acting on $\mathbb E_n \otimes \R$. 

Therefore in the case of conjugacy classes $C$ of order $2$, Carter's construction is independent of the choices of $w_i$ and the factorization of each $w_i$ into reflections about mutually orthogonal roots. So if $g \in C$ is a product of reflections $\RRef_{v_k}$ about mutually orthogonal roots $v_k$ with $1 \leq k\leq m$ then the unique Carter graph of $C$ is $\Gamma = (A_1)^m$ which has $m$ vertices and no edges. This is the Dynkin diagram of the Weyl subgroup $\langle \RRef_{v_1}, \dots, \RRef_{v_m} \rangle \cong W(A_1)^m$ of $W_n$. Moreover, $m$ is the dimension of the $(-1)$-eigenspace of $g$ acting on $\mathbb E_n \otimes \R$. 

Throughout the rest of this section, we use the notation
\[
	\alpha_{ijk} := H - E_i - E_j - E_k \in H_2(M_n; \Z)
\]
for $1 \leq i, j, k \leq n$ distinct.
\begin{lem}\label{lem:n6}
Any element $g \in \Mod^+(M_6)$ of order $2$ is reducible.
\end{lem}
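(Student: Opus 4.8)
We must show that every order-$2$ element $g \in \Mod^+(M_6)$ is reducible. By Corollary \ref{cor:involution-classification}, it suffices to prove this for $g \in W_6 \cong W(E_6)$, since any irreducible involution in $\Mod^+(M_6)$ would lie in $W_6$. So the entire problem reduces to a finite, group-theoretic check inside the Weyl group $W(E_6)$.

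**Plan of attack.** The plan is to classify the order-$2$ conjugacy classes of $W_6 \cong W(E_6)$ using Carter's machinery (\cite{carter}), exactly as was done for $W_5$ in Proposition \ref{prop:n5}. First I would invoke \cite[Lemma 5]{carter} to write any involution $g$ as a product $\prod_{\alpha \in I}\RRef_\alpha$ of reflections about a set $I$ of \emph{mutually orthogonal} roots of $\mathbb{E}_6$; the Carter graph is then $(A_1)^{|I|}$. By Carter's classification (his Table 8 for $E_6$), the number of mutually orthogonal roots $m = |I|$ can only take a small number of values (up to $W_6$-conjugacy there are only finitely many order-$2$ classes, indexed essentially by $m \in \{1,2,3,4\}$). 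So I would enumerate, up to $W_6$-action, a representative orthogonal set $I$ for each conjugacy class, using the explicit description of the roots of $\mathbb{E}_6$ (of the form $E_i - E_j$ and $\pm(H - E_i - E_j - E_k)$, analogous to \cite[Proposition 8.2.7]{dolgachev} for $\mathbb{E}_5$).

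**The reducibility check.** For each representative $g$, the goal is to produce one of the three lattice-theoretic witnesses of reducibility furnished by Lemma \ref{lem:lattices}: either a fixed class $c$ with $Q_{M_6}(c,c) = 1$ (part (1), valid since $6 \le 7$), a pair $c_1, c_2$ with $Q_{M_6}(c_k,c_\ell) = 1 - \delta_{k\ell}$ that is preserved or swapped (part (2)), or a fixed class $c$ with self-intersection $-1$ (part (3)). Concretely, for most classes I expect to find an invariant vector of positive square in $\Z\{I\}^\perp$ — for instance, when $I$ contains roots of the form $H - E_i - E_j - E_k$, the orthogonal complement typically contains a class like $2H - \sum E_\ell$ (over appropriate indices) or a coordinate class $E_\ell$ fixed by all the reflections, giving a self-intersection $+1$ or $-1$ witness. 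For small $m$ (say $m \le 2$), $g$ will visibly lie in the image of a standard inclusion $\iota_*$, as in case (3) of Proposition \ref{prop:n5}. The crucial point distinguishing $n = 6$ from $n = 5$ is that $\mathbb{E}_6$ has \emph{odd} rank-complement behavior: there is no analogue of the de Jonquières class (which required $n$ odd in Lemmas \ref{lem:de-jonquieres-cp2}–\ref{lem:de-jonquieres-irreducible}), so I expect \emph{every} orthogonal configuration to admit such a witness, leaving no irreducible class.

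**Main obstacle.** The hard part will be the class with the maximal number of mutually orthogonal roots — the configuration analogous to $I = S$ in Proposition \ref{prop:n5}, which is exactly the one that was \emph{irreducible} for $n = 5$. For $n = 6$ I must verify that even this maximal configuration admits a reducibility witness; the key structural input is that $\mathbb{E}_6$ is an even lattice while $Q_{M_6}|_{\Z\{c\}^\perp}$ for a $(+1)$-class is odd, so the arithmetic of $H_2(M_6;\Z)^g$ should obstruct irreducibility differently than in the even-rank del Pezzo case. I would compute $Q_{M_6}$ restricted to the $g$-invariant sublattice for this maximal class and exhibit an explicit invariant class of square $+1$ (or a hyperbolic pair as in Lemma \ref{lem:lattices}(2)), thereby closing the last case and concluding that every order-$2$ element of $\Mod^+(M_6)$ is reducible via Corollary \ref{cor:involution-classification}.
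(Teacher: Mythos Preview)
Your proposal is correct and follows the same overall framework as the paper: reduce to $W_6$ via Corollary \ref{cor:involution-classification}, invoke Carter's classification to list the order-$2$ conjugacy classes (indeed parametrized by $m \in \{1,2,3,4\}$; note it is Table 9 in \cite{carter}, not Table 8), and then apply Lemma \ref{lem:lattices} case by case. However, you miss a uniform shortcut that collapses the entire case analysis to a single line. The paper observes that the very same maximal orthogonal set $S = \{\alpha_{123},\, \alpha_{145},\, E_2-E_3,\, E_4-E_5\}$ used for $\mathbb E_5$ in Proposition \ref{prop:n5} still consists of roots of $\mathbb E_6$, and since Carter's table says the conjugacy class in $W(E_6)$ is determined by $m = \lvert I\rvert$ alone, \emph{every} involution in $W_6$ is represented by some $\prod_{\alpha \in I}\RRef_\alpha$ with $I \subseteq S$. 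Every root in $S$ is orthogonal to $E_6$, so $g(E_6) = E_6$ holds uniformly, and Lemma \ref{lem:lattices}(\ref{lem:lattices2}) finishes all cases at once. In particular the ``maximal configuration'' you flag as the main obstacle is no harder than the others; there is no need for $(+1)$-classes, hyperbolic pairs, or any separate treatment of small $m$.
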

\begin{proof}
Consider the set
\[
	S = \{\alpha_{123},\, \alpha_{145},\, E_2-E_3, \, E_4-E_5\}
\]
of four mutually orthogonal roots of $\mathbb E_6$. According to \cite[Table 9]{carter}, the conjugacy classes of order $2$ are in bijection with the graphs $\Gamma = (A_1)^m$ with $1 \leq m \leq 4$. Therefore, such conjugacy classes are represented by elements of the form $\prod_{\alpha \in I}\RRef_{\alpha}$ for some $I \subseteq S$. All such involutions $g$ satisfy $g(E_6) = E_6$, making them reducible by Lemma \ref{lem:lattices}(\ref{lem:lattices2}) . 
\end{proof}

\begin{prop}\label{prop:n7}
There are two conjugacy classes of irreducible involutions in $\Mod^+(M_7)$ and the elements of these conjugacy classes are realized by de Jonqui\'eres involutions of (algebraic) degree $4$ and Geiser involutions.
\end{prop}
\begin{proof}
Consider the set of mutually orthogonal roots
\[
	S = \{\alpha_{127}, \, \alpha_{347}, \, \alpha_{567}, \, E_1-E_2, \, E_3-E_4, \, E_5-E_6, \, 2H - E_1 - \dots- E_6\}.
\]
According to \cite[Table 10]{carter}, the Carter graphs of the conjugacy classes of order $2$ are of the form $\Gamma = (A_1)^k$ for some $1 \leq k \leq 7$. Each graph 
\[
	\Gamma = A_1, \, (A_1)^2, \, (A_1)^5, \, (A_1)^6, \, (A_1)^7
\]
has a unique associated conjugacy class of order $2$ in $W_7$. Each graph
\[
	\Gamma = (A_1)^3, \, (A_1)^4
\]
has two associated conjugacy classes of order $2$ in $W_7$. 
\begin{enumerate}
	\item The conjugacy classes of $\Gamma = A_1$ and $\Gamma = (A_1)^2$ are represented by $g=\RRef_{E_1-E_2}$ and $g = \RRef_{E_1-E_2}\circ \RRef_{E_3-E_4}$ respectively. In both cases, $g$ is reducible by Lemma \ref{lem:lattices}(\ref{lem:lattices1}) because $g(H) = H$.
	\item The conjugacy class of $\Gamma = (A_1)^5$ is represented by $g = \prod_{\alpha \in I} \RRef_\alpha$ with
	\[
		I = \{\alpha_{123},\, \alpha_{145},\, E_2-E_3, \, E_4-E_5, \, E_6-E_7 \}. 
	\]
	Then $g$ is in the image of the standard inclusion
	\[
		\iota_*: \Mod^+(M_5) \times \Mod(\#2\overline{\CP^2}) \hookrightarrow \Mod^+(M_7).
	\]
	Therefore, $g$ is reducible.

	\item The conjugacy class of $\Gamma = (A_1)^6$ is represented by $g = \prod_{\alpha \in I} \RRef_\alpha$ with 
	\[
		I = \{\alpha_{127}, \, \alpha_{347}, \, \alpha_{567},\, E_1-E_2, \, E_3-E_4, \, E_5-E_6\}.
	\]
	By Lemmas \ref{lem:de-jonquieres-cp2} and \ref{lem:de-jonquieres-irreducible}, $g$ is irreducible and realized by a de Jonqui\'eres involution of degree $4$.

	\item The conjugacy class of $\Gamma = (A_1)^7$ is represented by $g = \prod_{\alpha \in S} \RRef_\alpha$. Then $g$ acts by negation on $\mathbb E_7$ and is realized by the Geiser involution as described in Section \ref{sec:geiser-bertini}. By Lemma \ref{lem:geiser-bertini-irreducible}, $g$ is irreducible. 
\end{enumerate}
The remaining two cases are $\Gamma =(A_1)^3$ and $(A_1)^4$. 
\begin{enumerate}
	\item There are two conjugacy classes of order $2$ associated to $\Gamma = (A_1)^3$. Consider the two elements
	\begin{align*}
		h_1 &= \RRef_{\alpha_{127}}\circ \RRef_{\alpha_{347}} \circ \RRef_{\alpha_{567}}, \\
		h_2 &= \RRef_{E_1-E_2} \circ \RRef_{E_3-E_4} \circ \RRef_{E_5-E_6}.
	\end{align*}
	Let $\alpha_1 = 2H-E_1-E_3-E_5-E_7$ and $\alpha_2 = H - E_7$ and note that $h_1(\alpha_i) = \alpha_i$ for each $i = 1, 2$ because
	\[
		\alpha_1,\, \alpha_2 \in \Z\{\alpha_{127},\, \alpha_{347},\, \alpha_{567}\}^\perp.
	\]
	Because $Q_{M_7}(\alpha_k, \alpha_\ell) = 1-\delta_{k\ell}$, Lemma \ref{lem:lattices}(\ref{lem:lattices3}) shows that $h_1$ is reducible. Moreover, $h_2$ is reducible by Lemma \ref{lem:lattices}(\ref{lem:lattices1}) because $h_2(H) = H$; the subspace fixed by $h_2$ is
	\[
		H_2(M_7; \Z)^{\langle h_2 \rangle} = \Z\{H,\, E_7\} \oplus \Z\{E_1+E_2,\, E_3+E_4, \, E_5+E_6 \}.
	\]

	Suppose $h_1$ and $h_2$ are conjugate in $\Mod(M_7)$, so that there exist some $c_1, c_2 \in H_2(M_7; \Z)^{\langle h_2 \rangle}$ such that $Q_{M_7}(c_i, c_j) = Q_{M_7}(\alpha_i, \alpha_j)$ for all $i, j$. Then
	\[
		c_i = A_i E_7 + \left(\sum_{k=1}^3 B_{i, k} (E_{2k-1}+E_{2k})\right) + C_i H
	\]
	for some $A_i, B_{i,k}, C_i \in \Z$ for $i = 1, 2$ and $k = 1, 2, 3$ with $C_i^2 = A_i^2 + 2\sum_{k=1}^3 B_{i,k}^2$. Taking both sides mod $2$, we see that $C_i \equiv A_i \pmod 2$ for $i= 1, 2$ so that $C_1C_2-A_1A_2 \equiv 0\pmod 2$. However, $Q_{M_7}(c_1, c_2) = 1$ and
	\[
		Q_{M_7}(c_1, c_2) = -A_1A_2 + \left(\sum_{k=1}^3 -2B_{1,k}B_{2,k}\right) + C_1C_2 \equiv -A_1A_2+C_1C_2 \pmod 2.
	\]
	This is a contradiction. Therefore, both $h_1$ and $h_2$ are reducible and are not conjugate to each other in $\Mod(M_7)$.

	\item There are two conjugacy classes of order $2$ associated to $\Gamma = (A_1)^4$. Consider the two elements
	\begin{align*}
	h_1 &= \RRef_{E_1-E_2} \circ \RRef_{E_3-E_4} \circ \RRef_{E_6-E_7} \circ \RRef_{H-E_1-E_2-E_5} \\
	h_2 &= (\RRef_{H-E_1-E_2-E_3} \circ \RRef_{E_2-E_3}) \circ (\RRef_{H-E_1-E_4-E_5} \circ \RRef_{E_4-E_5}). 
	\end{align*}
	Then $h_1$ and $h_2$ are in the image of the standard inclusion
	\[
		\iota_*: \Mod^+(M_5) \times \Mod(\#2\overline{\CP^2}) \hookrightarrow \Mod^+(M_7)
	\]
	because $h_1$ and $h_2$ both preserve $\Z\{E_6, E_7\}$. Therefore, $h_1$ and $h_2$ are both reducible. 

	By Lemmas \ref{lem:de-jonquieres-cp2} and \ref{lem:de-jonquieres-irreducible}, the restriction of $h_2$ to $\iota(H_2(M_5; \Z))$ is irreducible and realizable by a de Jonqui\'eres involution. Moreover, $h_2$ restricts to a trivial action on $\iota(H_2(\#2\overline{\CP^2}; \Z))$. By Lemma \ref{lem:de-jonquieres-homology}, there is a decomposition as a $\Z[\langle h_2\rangle]$-module
	\[
		H_2(M_7; \Z) \cong \iota(H_2(M_5; \Z)) \circ \iota(H_2(\#2\overline{\CP^2}; \Z)) \cong \Z[\langle h_2\rangle]^{\oplus 2} \oplus C^{\oplus 2} \oplus \Z^{\oplus 2}
	\]
	where $C \cong \Z$ as a $\Z$-module and $h_2$ acts by negation on $C$. On the other hand, the $\Z[\langle h_1\rangle]$-module structure of $H_2(M_7; \Z)$ is
	\[
		H_2(M_7; \Z) = \Z\{ H-E_1,\, H-E_2\} \oplus \Z\{ H-E_1-E_2,\, E_5 \} \oplus \Z\{E_3,\, E_4\} \oplus \Z\{E_6,\, E_7\} \cong \Z[\langle h_1 \rangle]^{\oplus 4}.
	\]
	If $h_1$ and $h_2$ are conjugate in $\Mod(M_7)$ then the $\Z[\langle h_1 \rangle]$- and $\Z[\langle h_2 \rangle]$-module structures of $H_2(M_7; \Z)$ agree. Therefore, $h_1$ and $h_2$ are not conjugate in $W_7$ and the two conjugacy classes of order $2$ associated to $\Gamma = (A_1)^4$ are represented by $h_1$ and $h_2$.
\end{enumerate}
Therefore, the conjugacy classes of the Carter graphs $\Gamma = (A_1)^6$ and $(A_1)^7$ are the only two irreducible conjugacy classes of order $2$ in $W_7$ and they are realized by a de Jonqui\'eres involution $f$ of degree $4$ and a Geiser involution $\gamma$ respectively. By Lemmas \ref{lem:de-jonquieres-homology} and \ref{lem:geiser-bertini-irreducible},
\[
	H_2(M_7; \Z)^{\langle f \rangle} \cong \Z^2, \qquad H_2(M_7; \Z)^{\langle \gamma \rangle} \cong \Z
\] 
so $[f]$ and $[\gamma]$ are not conjugate in $\Mod(M_7)$. Corollary \ref{cor:involution-classification} then implies that these are the only two irreducible conjugacy classes of order $2$ in $\Mod^+(M_7)$.
\end{proof}

\begin{prop}\label{prop:n8}
There is exactly one conjugacy class of irreducible involutions in $\Mod^+(M_8)$ and the elements of this conjugacy class are realized by Bertini involutions.
\end{prop}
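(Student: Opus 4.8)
The plan is to mirror the proof of Proposition \ref{prop:n7}, using the identification $W_8 \cong W(\mathbb E_8)$ from Subsection \ref{sec:En} together with Carter's classification of conjugacy classes. Since $K := K_{X_8}$ satisfies $Q_{M_8}(K,K) = 9 - 8 = 1$, the class $K$ is fixed by all of $W_8$ and gives an orthogonal splitting $H_2(M_8;\Z) = \Z\{K\} \oplus \mathbb E_8$ with $\mathbb E_8$ even and negative definite. The Bertini involution $\beta$ acts by $-1$ on $\mathbb E_8$ and fixes $K$; as recorded in Subsection \ref{sec:geiser-bertini} it is the product of eight pairwise-commuting reflections, i.e.\ the class with Carter graph $(A_1)^8$, and Lemma \ref{lem:geiser-bertini-irreducible} already shows $[\beta]$ is irreducible. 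It therefore remains to prove that every other order-$2$ class of $W_8$ is reducible and that none of them is conjugate to $[\beta]$.

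By Carter's \cite[Lemma 5]{carter} every involution of $W_8$ is a product of reflections $\RRef_{\alpha_1}, \dots, \RRef_{\alpha_m}$ in mutually orthogonal roots $\alpha_i \in \mathbb E_8$, so its Carter graph is $(A_1)^m$ with $1 \le m \le 8$; I would read off the complete list of such conjugacy classes (several graphs $(A_1)^m$ carry more than one class, just as $(A_1)^3$ and $(A_1)^4$ do for $E_7$) from \cite[Table 11]{carter}. The main simplification I would exploit is the following uniform reduction: for an involution $g = \prod_{i}\RRef_{\alpha_i}$ with $m \le 7$, the fixed sublattice $\mathbb E_8^{\,g} = \Z\{\alpha_1, \dots, \alpha_m\}^\perp \cap \mathbb E_8$ is nonzero, and whenever it contains a root $r$ (a vector with $Q_{M_8}(r,r) = -2$) the class $c := K + r$ satisfies $Q_{M_8}(c,c) = 1 - 2 = -1$ and $g(c) = c$, so Lemma \ref{lem:lattices}(\ref{lem:lattices2}) makes $g$ reducible. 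Equivalently, $g$ is reducible as soon as the orthogonal complement in $\mathbb E_8$ of the span of its reflecting roots is not rootless.

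With this trick in hand, I would go through the list of \cite[Table 11]{carter} exactly as in Proposition \ref{prop:n7}: for each class I pick an explicit representative as a product of reflections about mutually orthogonal roots among the $\alpha_{ijk}$, the $E_i - E_j$, and classes such as $2H - E_1 - \cdots - E_6$, then either exhibit a root in its fixed lattice to apply the reduction above, or, failing that, exhibit a fixed or swapped isotropic pair to invoke Lemma \ref{lem:lattices}(\ref{lem:lattices3}), or place $g$ in the image of a standard inclusion. Classes sharing a Carter graph $(A_1)^m$ would be separated, when necessary, by the rank of the fixed lattice $H_2(M_8;\Z)^g$ or by the $\Z[\langle g\rangle]$-module structure of $H_2(M_8;\Z)$, exactly as $h_1$ and $h_2$ are distinguished for $n = 7$.

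The hard part will be the bookkeeping for $E_8$: there are strictly more involution classes than for $E_7$, and I must ensure that no class other than $(A_1)^8$ escapes the uniform reduction by having a rootless fixed lattice. For any such exceptional class I would argue directly that the indefinite fixed lattice $\Z\{K\}\oplus \mathbb E_8^{\,g}$, which has signature $(1, 8-m)$ with $m \le 7$, contains a hyperbolic plane, producing the isotropic pair needed for Lemma \ref{lem:lattices}(\ref{lem:lattices3}); the only involution whose fixed lattice is $\Z\{K\}$ alone is $g = -1$, the Bertini class. Having shown every order-$2$ class of $W_8$ except Bertini is reducible, Corollary \ref{cor:involution-classification} implies $[\beta]$ represents the unique irreducible conjugacy class of order $2$ in $\Mod^+(M_8)$.
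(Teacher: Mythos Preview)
Your strategy matches the paper's: enumerate order-$2$ classes in $W_8$ via \cite[Table 11]{carter}, show all but the Bertini class are reducible via Lemma \ref{lem:lattices}, then apply Corollary \ref{cor:involution-classification}. The paper's execution is simpler than what you outline, however. Per Table 11 only $(A_1)^4$ carries two classes in $W(E_8)$ --- every other $(A_1)^k$ carries exactly one --- so the bookkeeping is no worse than for $E_7$. More to the point, for every $k\le 7$ (including both $(A_1)^4$ classes) the paper chooses a representative that is a product of reflections in roots lying in $\mathbb E_7 \subset \mathbb E_8$, namely roots drawn from the set $S$ used in Proposition \ref{prop:n7}; any such $g$ fixes $E_8$, which is already a $(-1)$-class, so Lemma \ref{lem:lattices}(\ref{lem:lattices2}) applies immediately. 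Your ``fixed root'' device $c = K + r$ and the hyperbolic-plane fallback are therefore never needed.

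One caution about that fallback, should you want to keep it: the fixed lattice $\Z\{K\}\oplus \mathbb E_8^{\,g}$ is odd and indefinite but typically \emph{not} unimodular, since $\mathbb E_8^{\,g}$ is the orthogonal complement in $\mathbb E_8$ of a sublattice of discriminant $2^m$ (or a divisor thereof). Non-unimodular indefinite lattices need not contain a copy of the hyperbolic plane, so the claim ``contains a hyperbolic plane, producing the isotropic pair needed for Lemma \ref{lem:lattices}(\ref{lem:lattices3})'' would require its own justification. In the present situation this never arises --- every $k\le 7$ class has a representative fixing $E_8$ --- but the argument as written has a gap there.
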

\begin{proof}
According to \cite[Table 11]{carter}, the Carter graphs of the conjugacy classes of $W_8$ of order $2$ are of the form $\Gamma = \Gamma(A_1)^k$ for some $1 \leq k \leq 8$. Each graph 
\[
	\Gamma = A_1, \, (A_1)^2, \, (A_1)^3, \, (A_1)^5, \, (A_1)^6, \, (A_1)^7, \, (A_1)^8
\]
has a unique associated conjugacy class of order $2$ in $W_8$. The graph
\[
	\Gamma = (A_1)^4
\]
has two associated conjugacy classes of order $2$ in $W_8$. 
\begin{enumerate}
	\item The conjugacy classes of $\Gamma = (A_1)^k$ for $1 \leq k \leq 7$ and $k \neq 4$ are represented by $g = \prod_{\alpha \in I} \RRef_\alpha$ for some 
	\[
		I \subseteq \{\alpha_{127}, \, \alpha_{347}, \, \alpha_{567}, \, E_1-E_2, \, E_3-E_4, \, E_5-E_6, \, 2H - E_1 - \dots - E_6\}.
	\]
	Then $g(E_8) = E_8$ and therefore $g$ is reducible by Lemma \ref{lem:lattices}(\ref{lem:lattices2}).

	\item There are two conjugacy classes of order $2$ associated to $\Gamma = (A_1)^4$. Consider
	\begin{align*}
	h_1 &= \RRef_{E_1-E_2} \circ \RRef_{E_3-E_4} \circ \RRef_{E_6-E_7} \circ \RRef_{H-E_1-E_2-E_5} \\
	h_2 &= (\RRef_{H-E_1-E_2-E_3} \circ \RRef_{E_2-E_3}) \circ (\RRef_{H-E_1-E_4-E_5} \circ \RRef_{E_4-E_5}). 
	\end{align*}
	Both $h_1$ and $h_2$ are reducible by Lemma \ref{lem:lattices}(\ref{lem:lattices2}) because $h_i(E_8) = E_8$ for $i = 1, 2$. By the same proof as in the analogous case in Proposition \ref{prop:n7}, $h_1$ and $h_2$ are not conjugate in $\Mod(M_8)$. Therefore, the two conjugacy classes of order $2$ associated to $\Gamma = (A_1)^4$ are represented by $h_1$ and $h_2$.

	\item The conjugacy class of $\Gamma = (A_1)^8$ is represented by $g$ which acts by negation on $\mathbb E_8$ and fixes $3H - E_1 - \dots - E_8$. The involution $g$ is realized by the Bertini involution as described in Section \ref{sec:geiser-bertini}. By Lemma \ref{lem:geiser-bertini-irreducible}, $g$ is irreducible. 
\end{enumerate}
Therefore, there is a unique irreducible conjugacy class of order $2$ in $W_8$ and this class is realized by a Bertini involution. Corollary \ref{cor:involution-classification} then implies that this class is the only irreducible conjugacy class of order $2$ in $\Mod^+(M_8)$.
\end{proof}

We conclude by combining all of the lemmas above to prove Theorem \ref{thm:involutions-pt1}.
\begin{proof}[Proof of Theorem \ref{thm:involutions-pt1}]
Lemma \ref{lem:n-low} shows that each involution in $\Mod^+(M_n)$ of $1 \leq n \leq 4$ is reducible. Lemma \ref{lem:n6} and Propositions \ref{prop:n5}, \ref{prop:n7}, and \ref{prop:n8} show that the only irreducible involutions $g \in \Mod^+(M_n)$ for $5 \leq n \leq 8$ are those conjugate to the mapping classes of involutions on some $X = \Bl_P\CP^2$ induced by de Jonqui\'eres (of degree $d > 2$), Geiser, and Bertini involutions where $P$ is the set of its base points. Suppose $g \in \Mod^+(M_n)$ is realized by such an automorphism $\tilde g$ of $X$ via the diffeomorphism $\varphi: M_n \to X$. For any $f \in \Mod^+(M_n)$, there exists a diffeomorphism $F \in \Diff^+(M_n)$ with $[F] = f$ by Theorem \ref{thm:diffeo-realizable}. Hence $f^{-1}g f \in \Mod^+(M_n)$ is realized by $\tilde g$ via the diffeomorphism $\varphi\circ F: M_n \to X$. 
\end{proof}

Before considering the extension of Theorem \ref{thm:involutions-pt1} to Theorem \ref{thm:involutions}, we consider the notion of \emph{minimal pairs} considered by Bayle--Beauville (\cite{bayle--beauville}) in their classification of conjugacy classes of involutions in $\Cr(2)$. A pair $(S, \sigma)$ where $S$ is a rational surface and $\sigma$ is an involution of $S$ is called \emph{minimal} if any birational morphism $F: S \to S_0$ such that there exists an involution $\sigma_0$ of $S_0$ with $F \circ \sigma = \sigma_0 \circ F$ is an isomorphism. Corollary \ref{cor:minimal} is a reformulation of Theorem \ref{thm:involutions-pt1} using this language.

\begin{proof}[Proof of Corollary \ref{cor:minimal}]
Let $M$ be a del Pezzo manifold and let $g \in \Mod^+(M)$ be an irreducible mapping class of order $2$. By Theorem \ref{thm:involutions-pt1}, $g$ is realized by an involution $\sigma$ of a rational surface $S$ diffeomorphic to $M$. If $(S, \sigma)$ is not minimal then there exists some smooth rational curve $E \subseteq S$ such that $Q_M([E], [E]) = -1$ satisfying $\sigma(E) = E$ or $E \cap \sigma(E) = \emptyset$ by \cite[Lemma 1.1]{bayle--beauville}. In both cases, $M = M_n$ for some $1 \leq n \leq 8$. In the first case, $[\sigma]$ is reducible by Lemma \ref{lem:lattices}(\ref{lem:lattices2}). In the second case, $M = M_n$ for some $5 \leq n \leq 8$ by Theorem \ref{thm:involutions-pt1}. Note that $\Z\{[E], \sigma_*([E])\}$ is a $\Z$-submodule of $H_2(M; \Z)$ preserved by $[\sigma]$ to which the restriction of $Q_M$ is unimodular of signature $(0,2)$. Then $(\Z\{[E], \sigma_*([E])\}^\perp, Q_M)$ is preserved by $[\sigma]$ and is a unimodular lattice of signature $(1, n-2)$; it is isometric to $(H_2(M_{n-2}; \Z), Q_{M_{n-2}})$. Hence $[\sigma]$ is reducible. Therefore, $(S, \sigma)$ must be minimal if $[\sigma]$ is irreducible.

Now suppose $(S, \sigma)$ is a minimal pair where $S$ is a rational surface diffeomorphic to some del Pezzo manifold $M$ and $\sigma$ is an involution of $S$. All possible pairs $(S, \sigma)$ are listed in \cite[Theorem 1.4]{bayle--beauville}; we consider each case (i)-(vi) separately.
\begin{enumerate}
	\item[(i)] There exists a smooth $\CP^1$-fibration $f: S \to \CP^1$ and an involution $\tau$ of $\CP^1$ such that $f \circ \sigma = \tau \circ f$. Because $S$ is a geometrically ruled surface, $S$ is a $\CP^1$-bundle over $\CP^1$ by Noether--Enriques (\cite[Theorem III.4]{beauville}). Hence $S$ must be isomorphic to a Hirzebruch $\mathbb F_m$ for some $m \geq 0$ by \cite[Theorem 3.4.8]{gompf--stipsicz}. If $m > 0$ then any complex automorphism $\sigma$ of $S$ must preserve the unique irreducible curve $C$ of $S$ with self-intersection number $-m$ (given by a section of $f$) and $\sigma$ must also fix the homology class $[F]$ of the fiber $F$ of $f$. Because $[F]$ and $[C]$ span $H_2(S; \Z)$, this implies that $[\sigma] = \Id \in \Mod(M)$ so $[\sigma]$ does not have order $2$. If $m = 0$ then $S = \CP^1\times \CP^1$ is diffeomorphic to $M_*$. Any element of $\Mod(M_*)$ is irreducible by Lemma \ref{lem:n*0}.

	\item[(ii)] There exists a fibration $f: S \to \CP^1$ such that $f \circ \sigma = f$; the smooth fibers of $f$ are diffeomorphic to $\CP^1$ on which $\sigma$ induces a nontrivial involution and any singular fiber is the union of submanifolds diffeomorphic to $\CP^1$ exchanged by $\sigma$ meeting at one point. 

	Suppose $f$ has $s$-many singular fibers with $s > 0$. By the proof of \cite[Theorem 1.4]{bayle--beauville}, any singular fiber contains an exceptional divisor. Blowing down one of the components (call it $e_i$ for $1 \leq i \leq s$) in each singular fiber yields a geometrically ruled surface $f: S' \to \CP^1$, which is a $\CP^1$-bundle over $\CP^1$ by Noether--Enriques (\cite[Theorem III.4]{beauville}). This means that if $S_2\in H_2(S; \Z)$ is the class coming from a fiber of $f: S' \to \CP^1$ then 
	\[
		\sigma_*(S_2) = S_2, \qquad \sigma_*(e_i) = S_2-e_i
	\]
	for all $1 \leq i \leq s$. Because $H_2(S; \Z) = \Z\{S_1, S_2, e_1, \dots, e_{s}\}$ where $S_1$ is the class of a section of $f$, Lemmas \ref{lem:de-jon-determined} and \ref{lem:de-jonquieres-irreducible} imply that $[\sigma] \in \Mod(M_{s+1})$ is irreducible.

	If $s = 0$ then the same argument as in case (i) holds.

	\item[(iii), (iv)] The surface $S$ is isomorphic to $\CP^2$ or $\CP^1\times \CP^1$. Lemma \ref{lem:n*0} shows that $[\sigma] \in \Mod(M)$ is irreducible.
	\item[(v), (vi)] The surface $S$ is a del Pezzo surface of degree $2$ or $1$ and $f$ is the Geiser or Bertini involution respectively. Lemma \ref{lem:geiser-bertini-irreducible} shows that $[\sigma] \in \Mod(M)$ is irreducible. \qedhere
\end{enumerate} 
\end{proof}

\subsection{Extension to $\Mod(M_n)$ for $1 \leq n \leq 8$}\label{sec:extension}

In this section we compare the involutions $g \in \Mod(M_n)$ to involutions of $\Mod^+(M_n)$ to prove Theorem \ref{thm:involutions}. The following lemma will be used to construct involutions realizing irreducible order $2$ elements $g \in \Mod(M_n) - \Mod^+(M_n)$.
\begin{lem}\label{lem:tau-f-commute}
Let $P$ be a finite subset of $4 \leq n \leq 8$ points in general position contained in $\RP^2 \subseteq \CP^2$ and let $\tau_0: \CP^2 \to \CP^2$ be the map given by complex conjugation of the coordinates. Let $f: \Bl_P\CP^2 \to \Bl_P\CP^2$ be a complex automorphism of order $2$ induced by a birational map $f_0: \CP^2 \dashrightarrow \CP^2$ with base points given by $P$ and let $\tau: \Bl_P\CP^2 \to \Bl_P\CP^2$ be the map induced by $\tau_0$. Then $\tau$ and $f$ commute.
\end{lem}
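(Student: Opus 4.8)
The plan is to reduce the statement to a claim about birational self-maps of $\CP^2$ and then to exploit that the data defining $f_0$ is built canonically out of the point set $P$, which is pointwise fixed by $\tau_0$.

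First I would reduce to proving $\tau_0 \circ f_0 = f_0 \circ \tau_0$ as birational maps $\CP^2 \dashrightarrow \CP^2$. Since $P \subseteq \RP^2$, the conjugation $\tau_0$ fixes $P$ pointwise, so it lifts to the anti-biholomorphism $\tau$ of $\Bl_P\CP^2$; by hypothesis $f_0$ lifts to the biholomorphism $f$. Let $U \subseteq \Bl_P\CP^2$ be the complement of the exceptional divisors together with the proper transforms of the indeterminacy loci of $f_0$, $f_0^{-1}$, and $\tau_0$. On $U$ the blowdown is a diffeomorphism onto an open subset of $\CP^2$ that intertwines $\tau, f$ with $\tau_0, f_0$. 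Hence if the birational identity holds downstairs, then the continuous maps $\tau \circ f$ and $f \circ \tau$ agree on the dense open set $U$, so they agree on all of $\Bl_P\CP^2$. This disposes of the lifting and leaves a question purely on $\CP^2$.

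For the birational identity I would treat the three types of involution in turn, guided by the principle that a construction canonically attached to the real set $P$ is $\tau_0$-equivariant. For a de Jonqui\'eres involution this is already the content of Lemma \ref{lem:de-jonquieres-cp2}(2), and the same reasoning applies to any de Jonqui\'eres involution with real base points, whose preserved conic bundle and fixed hyperelliptic curve are defined over $\R$. For the Geiser and Bertini involutions I would use the intrinsic description as the unique nontrivial deck transformation of the degree-$2$ morphism $\pi \colon X \to Y$ defined by $\lvert -K_X \rvert$ (with $Y = \CP^2$) or $\lvert -2K_X \rvert$ (with $Y$ a quadric cone). Because $P$ is real, the canonical class is defined over $\R$, so $\tau$ permutes the members of $\lvert -K_X \rvert$ and $\lvert -2K_X \rvert$ and there is an anti-holomorphic involution $\tau_Y$ of $Y$ with $\pi \circ \tau = \tau_Y \circ \pi$. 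Setting $f' := \tau \circ f \circ \tau^{-1}$, which is a biholomorphic involution, the computation $\pi \circ f' = \tau_Y \circ \pi \circ f \circ \tau^{-1} = \tau_Y \circ \pi \circ \tau^{-1} = \pi$ shows that $f'$ is a deck transformation of $\pi$; since the deck group is $\{\Id, f\}$ and $f' \neq \Id$, we get $f' = f$, that is, $f$ and $\tau$ commute. For the Geiser case one may alternatively argue directly: $\gamma_0(p)$ is the ninth base point of the pencil of cubics through $P \cup \{p\}$, and applying $\tau_0$ (which fixes $P$) carries this pencil to the pencil through $P \cup \{\tau_0(p)\}$ and its ninth base point to $\tau_0(\gamma_0(p))$, giving $\gamma_0 \circ \tau_0 = \tau_0 \circ \gamma_0$.

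The main obstacle I anticipate is the Bertini case, where $Y$ is a singular quadric cone and one must verify that the bianticanonical morphism descends compatibly with the real structure, i.e.\ produce the anti-holomorphic involution $\tau_Y$ with $\pi \circ \tau = \tau_Y \circ \pi$. The genuinely delicate point throughout is the conjugate-linear nature of $\tau_0$: one must check that $\tau$ stabilizes the relevant complete linear systems (equivalently, that $-K_X$ carries a real structure) before the two-element deck group makes the identification $f' = f$ formal.
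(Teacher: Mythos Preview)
Your argument is correct for the three classical involution types and suffices for every use the paper makes of this lemma, but it is a genuinely different route from the paper's own proof. The paper argues \emph{uniformly}, without distinguishing which involution $f$ is: writing $f_0 = [F:G:H]$ in homogeneous polynomials, one observes that $g_0 := \tau_0 f_0 \tau_0 = [\overline F : \overline G : \overline H]$ is again birational with base locus exactly $P$ (since $\tau_0$ fixes $P$ pointwise), hence lifts to an automorphism $g = \tau f \tau$ of $\Bl_P\CP^2$; because $\tau_* = -\Id$ on $H_2$, we have $g_* = f_*$, and the decisive input is Dolgachev's result (\cite[Proposition~8.2.39]{dolgachev}) that $\Aut(\Bl_P\CP^2) \to \Aut(H_2(\Bl_P\CP^2;\Z))$ is injective for $4 \leq n \leq 8$ points in general position, forcing $g = f$. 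This buys full generality (the lemma as stated allows any order-$2$ automorphism $f$, not only the three named types) and eliminates the case analysis. Your deck-transformation argument for the Geiser and Bertini cases is a clean alternative that trades the faithfulness theorem for the explicit double-cover structure; it is lighter on external input but does not, as written, cover an arbitrary $f$ satisfying the hypotheses---so it proves what the paper needs rather than the lemma exactly as stated.
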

\begin{proof}
For any polynomial $F \in \C[X,Y,Z]$, write $\overline F\in\C[X,Y,Z]$ to denote the polynomial obtained by conjugating the coefficients of $F$. There exist homogeneous polynomials $F, G, H \in \C[X,Y,Z]$ of degree $m$ such that 
\[
	f_0(q) = [F(q) : G(q) : H(q)]
\]
for all $q \notin P$. Then $g_0 := \tau_0 f_0 \tau_0$ is given by
\[
	g_0(q) = [\overline F(q) : \overline G(q) : \overline H(q)].
\]
If $q \in \CP^2$ such that $g_0$ is not defined at $q$ then
\[
	\tau_0(0) = \tau_0(\overline F(q)) = F(\tau_0(q)).
\]
Similarly, $G(\tau_0(q)) = H(\tau_0(q)) = 0$ and so $\tau_0(q) \in P$. Therefore, $q \in P$ because the points of $P$ are fixed by $\tau_0$. This shows that $g_0$ is birational and lifts to an automorphism $g$ of $\Bl_P\CP^2$. 

By construction, $g = \tau f \tau$ as a diffeomorphism of $\Bl_P\CP^2$. The action of $g_*$ on $H_2(\Bl_P\CP^2; \Z)$ coincides with the action of $f_*$ because $\tau_*$ acts by negation on $H_2(\Bl_P\CP^2; \Z)$. Therefore, $f = \tau f \tau$ because the homomorphism $\Aut(\Bl_P\CP^2) \to \Aut(H_2(\Bl_P\CP^2; \Z), Q_{\Bl_P\CP^2})$ is injective (\cite[Proposition 8.2.39]{dolgachev}).
\end{proof}

We finally extend Theorem \ref{thm:involutions-pt1} to prove Theorem \ref{thm:involutions}.
\begin{proof}[Proof of Theorem \ref{thm:involutions}]
Let $-I \in \Mod(M_n)$ denote the mapping class which acts by negation on $H_2(M_n; \Z)$, and let $-g = (-I) \circ g$ for any $g \in \Mod(M_n)$. If $g$ preserves some $\Z$-submodule $N \leq H_2(M_n; \Z)$ then $-g$ preserves $N$ as well. Therefore, $g$ is reducible if and only if $-g$ is reducible.

Let $g \in \Mod(M_n)$ be an irreducible element of order $2$. If $g \in \Mod^+(M_n)$ then Theorem \ref{thm:involutions-pt1} shows that $g$ is realized by a de Jonqui\'eres (of degree $d > 2$), Geiser, or Bertini involution. If $g \notin \Mod^+(M_n)$ then $-g \in \Mod^+(M_n)$. Theorem \ref{thm:involutions-pt1} shows that $-g$ is realized by de Jonqui\'eres (of degree $d > 2$), Geiser, or Bertini involutions. 
\begin{enumerate}
	\item If $-g$ is realized by Geiser or Bertini involutions then let $X = \Bl_P\CP^2$ where $P$ is a set of $n$ points in general position contained in $\RP^2 \subseteq \CP^2$. Let $f$ be the Geiser or Bertini involution of $X$ and let $\tau$ be the diffeomorphism of $X$ induced by complex conjugation on $\CP^2$. By Lemma \ref{lem:tau-f-commute}, $f\circ \tau$ has order $2$ in $\Diff^+(M_n)$. Then $[f\circ \tau] = g$ because $[\tau] = -I$ and $[f] = -g$.
	\item If $-g$ is realized by de Jonqui\'eres involutions then Lemma \ref{lem:de-jonquieres-cp2} shows that there exist $X = \Bl_P\CP^2$ where $P$ is a set of $n$ points in $\RP^2 \subseteq \CP^2$ and an automorphism $f \in \Aut(X)$ induced by a de Jonqui\'eres involution that commutes with the anti-biholomorphism $\tau$ induced by complex conjugation on $\CP^2$. Therefore, $f\circ\tau$ has order $2$ and $[f\circ\tau] = g$. \qedhere
\end{enumerate}
\end{proof}

\section{The smooth Nielsen realization problem for involutions}\label{sec:nielsen}
In this section we describe a construction that we call \emph{complex equivariant connected sums} and use it to prove the smooth Nielsen realization problem for involutions (Corollary \ref{cor:involutions}).
\subsection{Complex equivariant connected sums}\label{sec:cecs}
Finding representative diffeomorphisms of a mapping class $g$ of order two has distinct flavors depending on the irreducibility of $g$. We define \emph{complex equivariant connected sums} in order to realize order $2$ reducible mapping classes of del Pezzo manifolds. The definition here is specialized to $G = \Z/2\Z$ and is a special case of \emph{equivariant connected sums} which appear in \cite[(1.C)]{hambleton--tanase}. For a more general description, also see \cite[Section 2.2]{lee}.

Let $N_1, N_2$ be smooth manifolds and let $G = \Z/2\Z$. Fix a $G$-invariant Riemannian metric on both $N_1$ and $N_2$. Consider diffeomorphisms $g_i \in \Diff^+(N_i)$ of order two for $i = 1, 2$. Suppose there are points $p_i \in N_i$ for $i = 1, 2$ such that $p_i$ is fixed by $g_i$ and the tangent representations $G_i \to \SO(T_{p_i}N_i)$ are equivalent by an orientation-reversing isomorphism $\rho: T_{p_1}N_1 \to T_{p_2}N_2$. By the equivariant tubular neighborhood theorem (\cite[Theorem VI.2.2]{bredon}), there exist $G$-invariant neighborhoods of $p_i \in N_i$ for each $i = 1, 2$ which are $G$-equivariantly diffeomorphic to $T_{p_i}N_i$. We can now form as usual a connected sum $N_1 \# N_2$ by taking the $G$-equivariant neighborhoods of $p_1$ and $p_2$ in $N_1 - p_1$ and $N_2-p_2$ respectively and equivariantly identifying concentric annuli around $p_1$ and $p_2$ via the orientation-reversing map $\rho$. Then the connected sum $N_1 \# N_2$ has a natural smooth action of $G$. The $G$-manifold $(N_1 \# N_2, G)$ is called an \emph{equivariant connected sum}. See Figure \ref{fig:type1} for an illustration.
\begin{figure}
\centering
\includegraphics{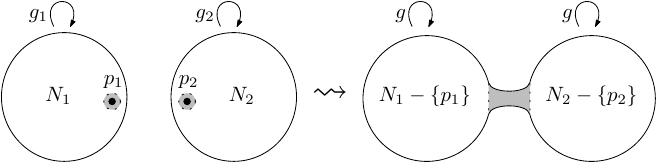}
\caption{The equivariant connected sum $(N_1\#N_2, G)$ where $G = \langle g \rangle \cong \Z/2\Z$.}\label{fig:type1}
\end{figure}

Consider $G \times N_2$. Suppose there exist points $p_1 \in N_1$ which is not fixed by $g_1$ and any $p_2 \in N_2$. Similarly as in the first case, the $G$-equivariant identification of the neighborhoods of the points in the $G$-orbit $\{p_1, g_1(p_1)\}$ of $p_1 \in N_1$ and the neighborhoods of the points $G \times \{p_2\}$ in $G\times N_2$ is denoted $(N_1 \# (G \times N_2), G)$ and is also called an \emph{equivariant connected sum}. See Figure \ref{fig:type2} for an illustration.

With these definitions in mind, we define a \emph{complex equivariant connected sum}. 
\begin{defn}\label{defn:cecs}
Let $M$ be a smooth, oriented manifold and let $G \cong \Z/2\Z \leq \Diff^+(M)$. The pair $(M, G)$ is called a \emph{complex equivariant connected sum} if one of the following holds:
\begin{enumerate}
	\item $(M, G)$ is $G$-equivariantly diffeomorphic to $(N, G)$ or $(\overline N, G)$ where $N$ is a complex manifold and $\overline N$ is the same manifold with the opposite orientation; each $g \in G \leq \Diff^+(N)$ is biholomorphic or anti-biholomorphic, \label{defn:cecs-1}
	\item $(M, G)$ is $G$-equivariantly diffeomorphic to an equivariant connected sum $(N_1 \# N_2, G)$ where $(N_1, G)$ and $(N_2, G)$ are complex equivariant connected sums, or
	\item $(M, G)$ is $G$-equivariantly diffeomorphic to an equivariant connected sum $(N_1 \# (\Z/2\Z) \times N_2, G)$ where $(N_1, G)$ is a complex equivariant connected sum.
\end{enumerate}
\end{defn}

If $G_0 \leq \Mod(M)$ is a finite group such that there exists a complex equivariant connected sum $(M, G)$ and $G \leq \Diff^+(M)$ is a lift of $G_0$ under the quotient $\pi: \Homeo^+(M) \to \Mod(M)$ then we say that $G_0$ is \emph{realizable by a complex equivariant connected sum}.

The following lemma is used in realizing reducible mapping classes of order $2$.
\begin{lem}\label{lem:connect-inv}
Let $M$ and $N$ be smooth $4$-manifolds and let $f_M \in \Diff^+(M)$ and $f_N \in \Diff^+(N)$ be diffeomorphisms of order $2$ fixing real surfaces $S_M \subseteq M$ and $S_N \subseteq N$ respectively. There is an equivariant connected sum $(M\#N, \langle f \rangle)$ where $\langle f \rangle \cong \Z/2\Z$ such that $f|_{M-p \subseteq M\#N} = f_M$ and $f|_{N-q\subseteq M\#N} = f_N$ with $p \in S_M$ and $q \in S_N$. Moreover, $f$ fixes a real surface in $M\#N$.
\end{lem}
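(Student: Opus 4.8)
The plan is to build $(M \# N, \langle f \rangle)$ by invoking the equivariant connected sum construction described just before this lemma, performed at points $p \in S_M$ and $q \in S_N$, and then to choose the gluing so that the two fixed surfaces are joined. The essential input is the local structure of an orientation-preserving involution along its fixed surface.

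First I would record the local normal form. Since $f_M$ has order $2$ and fixes $S_M$ pointwise, the differential $d(f_M)_p$ is a linear involution of $T_p M$ whose $(+1)$-eigenspace is $T_p S_M$. As $S_M$ has codimension $2$, the $(-1)$-eigenspace is the normal space $\nu_p$, which is also $2$-dimensional, so with respect to the splitting $T_p M = T_p S_M \oplus \nu_p$ one has $d(f_M)_p = \Id \oplus (-\Id)$. Because $\det(-\Id)$ on $\R^2$ equals $+1$, this is consistent with $f_M \in \Diff^+(M)$. The same holds for $f_N$ at any $q \in S_N$, giving $T_q N = T_q S_N \oplus \nu_q$ with $d(f_N)_q = \Id \oplus (-\Id)$.

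Next I would produce an orientation-reversing, $\langle f\rangle$-equivariant linear isomorphism $\phi \colon T_p M \to T_q N$, which is precisely the hypothesis needed to run the equivariant connected sum. Any such $\phi$ must carry isotypic components to isotypic components, so I take $\phi = \phi_+ \oplus \phi_-$ with linear isomorphisms $\phi_+ \colon T_p S_M \to T_q S_N$ and $\phi_- \colon \nu_p \to \nu_q$; both are automatically equivariant, since the group acts trivially on the $(+)$ summands and by $-\Id$ on the $(-)$ summands. Composing $\phi_+$ with a reflection if necessary, I can arrange $\det\phi_+ \cdot \det\phi_- < 0$, so $\phi$ is orientation-reversing. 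After averaging Riemannian metrics to linearize each action on a $\langle f\rangle$-invariant ball $B_p$, $B_q$ (the equivariant tubular neighborhood theorem), the construction preceding this lemma then produces $(M \# N, \langle f \rangle)$ with $f$ restricting to $f_M$ on $M - B_p$ and to $f_N$ on $N - B_q$, as required.

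Finally I would identify the fixed surface. Because $\phi$ carries $T_p S_M$ onto $T_q S_N$, the induced equivariant gluing diffeomorphism of the boundary spheres carries the $(+1)$-eigencircle $S_M \cap \partial B_p$ onto $S_N \cap \partial B_q$. Hence the surfaces-with-boundary $S_M \setminus \mathrm{int}(B_p)$ and $S_N \setminus \mathrm{int}(B_q)$ are matched along their boundary circles and glue to a closed surface, namely $S_M \# S_N$. Since the action is linear on each invariant ball and the gluing is equivariant, no new fixed points are introduced in the connecting neck, so $\Fix(f) = \bigl(S_M \setminus \mathrm{int}(B_p)\bigr) \cup \bigl(S_N \setminus \mathrm{int}(B_q)\bigr)$ is exactly this real surface. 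I expect the main obstacle to be the orientation bookkeeping: arranging the equivariant gluing isomorphism to be simultaneously orientation-reversing (so $M \# N$ is coherently oriented) and compatible with the eigenspace splittings (so the fixed surfaces join). The normal-form and linearization steps are standard, but verifying the ``moreover'' clause requires tracking the $(+1)$-eigenspheres through the gluing with care.
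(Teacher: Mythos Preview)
Your proposal is correct and follows essentially the same approach as the paper: both fix invariant metrics, identify the tangent representation at a point of the fixed surface as $\Id \oplus (-\Id)$ on $T_pS_M \oplus (T_pS_M)^\perp$, choose an orientation-reversing linear isomorphism respecting this eigenspace splitting, and conclude that the fixed surfaces glue to $S_M \# S_N$. Your treatment of the ``moreover'' clause---tracking the $(+1)$-eigencircles through the gluing---is in fact more explicit than the paper's, which simply asserts that $S_M \# S_N$ is fixed.
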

\begin{proof}
Fix $f_M$- and $f_N$-invariant metrics on $M$ and $N$. For any $p \in S_M$, the action of $d(f_M)_p$ on $T_pM$ fixes $T_pS_M \subseteq T_pM$ and acts by negation on $T_pS_M^\perp \subseteq T_pM$. Similarly, the action of $d(f_N)_q$ on $T_qN$ fixes $T_qS_N \subseteq T_qN$ and acts by negation on $T_qS_N^\perp \subseteq T_qN$ for all $q \in S_N$. There is an orientation-reversing isomorphism $\varphi: T_qN \to T_pM$ taking $T_qS_N$ to $T_pS_M$ in an orientation-reversing way and taking $T_qS_N^\perp$ to $T_pS_M^\perp$ in an orientation-preserving way. By construction, $\langle f_M\rangle \to \SO(T_pM)$ and $\langle f_N \rangle \to \SO(T_qN)$ are equivalent by the orientation-reversing isomorphism $\varphi$ which forms the equivariant connected sum $(M\# N, \Z/2\Z)$. Moreover, $S_M \#S_N \subseteq M\#N$ is a real surface fixed by the resulting smooth $\Z/2\Z$-action.
\end{proof}

\subsection{The proof of Corollary \ref{cor:involutions}}\label{sec:nielsen-proof}
Let $M$ be a del Pezzo manifold. Throughout this section, we say that $\Id \in\Mod(M)$ is \emph{realizable by an order $2$ complex equivariant connected sum} if there exists a complex equivariant connected sum $(M, G)$ and $G \cong \Z/2\Z \leq \Diff^+(M)$ such that $G \leq \ker(\pi)$, where $\pi$ denotes the usual quotient $\Homeo^+(M) \to \Mod(M)$. Note that in this case, $G$ is not a lift of its image $\pi(G) = \Id \leq \Mod(M)$. 

The following lemma forms the base case of the inductive proof of Corollary \ref{cor:involutions}.
\begin{lem}\label{lem:nielsen-base}
Let $M = M_0$ or $M_*$. Any element $g \in \Mod(M)$ is realizable by order $2$ complex equivariant connected sum fixing a real surface.
\end{lem}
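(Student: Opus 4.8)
The plan is to realize each mapping class directly by an explicit holomorphic or anti-holomorphic involution of the complex manifold $M$, which is precisely a complex equivariant connected sum of type (1) in Definition \ref{defn:cecs}. First I would pin down the groups: under the identifications of Section \ref{sec:ModM}, $\Mod(M_0) \cong \OO(1,0)(\Z) = \{\pm\Id\}$ is cyclic of order $2$, while $\Mod(M_*) = \Aut(H_2(M_*;\Z), Q_{M_*})$ is generated by $-\Id$ and the swap $c$ of the isotropic generators $S_1, S_2$, hence is isomorphic to $\Z/2\Z\times\Z/2\Z$. In particular every nontrivial element has order $2$, so it suffices to produce for each one an order $2$ biholomorphism or anti-biholomorphism inducing it, with a $2$-dimensional fixed locus; the identity class is realized by the identity map.

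Next I would exhibit the maps themselves. For $M_0 = \CP^2$ the only nontrivial class is $-\Id$, realized by complex conjugation $\tau_0\colon [X:Y:Z]\mapsto[\overline X:\overline Y:\overline Z]$, an anti-holomorphic involution fixing $\RP^2$. For $M_* = \CP^1\times\CP^1$ I would realize $c$ by the swap $\sigma\colon([x],[y])\mapsto([y],[x])$, a biholomorphic involution fixing the diagonal $\cong\CP^1$; realize $-\Id$ by coordinatewise conjugation $\tau_0\colon([x],[y])\mapsto([\overline x],[\overline y])$, fixing $\RP^1\times\RP^1\cong(S^1)^2$; and realize $-c$ by $\tau_0\circ\sigma$, an anti-holomorphic involution fixing the anti-diagonal $\{([x],[\overline x])\}\cong\CP^1$. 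In every case the realizing map is (anti-)holomorphic of order $2$ and fixes a real surface, so $(M,\langle g\rangle)$ is a complex equivariant connected sum of type (1), and the fixed-surface conclusion is exactly what feeds into Lemma \ref{lem:connect-inv} in the inductive proof of Corollary \ref{cor:involutions}.

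The step requiring the most care is the homological bookkeeping: I must check that each candidate lies in $\Homeo^+(M)$ and induces the asserted element of $\Mod(M)$. The key orientation facts are that complex conjugation on $\CP^n$ is orientation-preserving if and only if $n$ is even (so both $\tau_0$ on $\CP^2$ and $\tau_0$ on $\CP^1\times\CP^1$, the latter being a product of two orientation-reversing factor conjugations, lie in $\Homeo^+$), and that conjugation reverses the complex orientation of each embedded $\CP^1$, which forces $(\tau_0)_* = -\Id$ on $H_2$. Concretely, $\tau_0$ sends the class $S_1 = [\CP^1\times\{q\}]$ to the conjugate line carried with reversed orientation, i.e. to $-S_1$, and similarly $(\tau_0)_*S_2 = -S_2$; combining with $\sigma_*(S_1) = S_2$ then yields $(\tau_0\sigma)_* = -c$. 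Tracking these signs through $\sigma$, $\tau_0$, and $\tau_0\sigma$ confirms the induced classes are $c$, $-\Id$, and $-c$, which together with $-\Id$ on $M_0$ exhausts both groups and completes the proof. I expect this orientation-and-sign verification to be the main obstacle, as it is the only place where the argument is not purely formal.
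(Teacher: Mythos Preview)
Your treatment of the nontrivial mapping classes is correct and essentially identical to the paper's proof: the same explicit (anti-)biholomorphic involutions are used, and your orientation and homology bookkeeping is fine.

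There is, however, a genuine gap in your handling of the identity mapping class. You write ``the identity class is realized by the identity map,'' but this does not satisfy the lemma as stated nor as it is used downstream. A complex equivariant connected sum is by Definition~\ref{defn:cecs} a pair $(M,G)$ with $G \cong \Z/2\Z \leq \Diff^+(M)$, so the realizing diffeomorphism must have order exactly $2$; the identity map generates the trivial group. More importantly, the inductive step in Lemma~\ref{lem:nielsen-induction} explicitly requires, for every $h_1 \in \Mod(M_k)$ of order \emph{dividing} $2$ (including $h_1 = \Id$), an order~$2$ diffeomorphism $h$ of $M_k$ fixing a real $2$-dimensional surface, so that Lemma~\ref{lem:connect-inv} can be applied to attach $\overline{\CP^2}$-summands. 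The identity map fixes the whole $4$-manifold, not a surface, so the tangent-representation matching in Lemma~\ref{lem:connect-inv} is not available.

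The paper addresses this by realizing the identity class on $M_0$ via the holomorphic involution $f_+\colon[X:Y:Z]\mapsto[-X:Y:Z]$ (fixing the line $\{X=0\}$ and the isolated point $[1:0:0]$), and on $M_*$ via $f_0\colon([X:Y],[W:Z])\mapsto([-X:Y],[W:Z])$ (fixing two copies of $\CP^1$). Both act trivially on $H_2$ yet have order $2$ and a $2$-dimensional fixed locus. Adding these two maps to your list closes the gap.
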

\begin{proof}
Note that $\Mod(M_0) = \Mod(\CP^2) \cong \{\pm \Id\}$, where the nontrivial element acts on $H_2(\CP^2; \Z)$ by negation. Then $[f_-] = -\Id \in \Mod(\CP^2)$ where $f_-$ is the involution $[X:Y:Z] \mapsto [\overline X: \overline Y: \overline Z]$ given by complex conjugation and fixes a real surface in $M_0$. Moreover, $[f_+] = \Id \in \Mod(\CP^2)$ where $f_+$ is the involution $[X:Y:Z] \mapsto [-X:Y:Z]$ which also fixes a real surface in $M_0$.

Note that $\Mod(M_*) = \langle c_1, c_2 \rangle \cong (\Z/2\Z)^2$ where
\[
	c_1 = \begin{pmatrix}
		-1 & 0 \\ 0 & -1
	\end{pmatrix}, \qquad c_2 = \begin{pmatrix}
	0 & 1 \\ 1 & 0
	\end{pmatrix}
\]
with respect to the $\Z$-basis $(S_1, S_2)$ of $H_2(M_*; \Z)$. Define 
\[
	f_{c_1}([X:Y], [W:Z]) = ([\overline X: \overline Y], [\overline W: \overline Z]), \qquad f_{c_2}([X:Y], [W:Z]) = ([W:Z], [X:Y]).
\]
The group $\langle f_{c_1}, f_{c_2} \rangle \leq \Diff^+(M_*)$ is a lift of $\Mod(M_*)$ under the quotient map $\pi: \Homeo^+(M_*) \to \Mod(M_*)$ with $\pi(f_{c_i}) = c_i$ for each $i = 1, 2$. It is straightforward to check that all nontrivial elements of $\langle f_{c_1}, f_{c_2} \rangle$ fix a real surface in $M_*$. The identity element is realized by $f_0: M_* \to M_*$ where $f_0([X:Y], [W:Z]) = ([-X:Y], [W:Z])$ which fixes a real surface in $M_*$. 
\end{proof}

The inductive step is handled by the lemma below. The proof is straightforward but included for the sake of completeness.
\begin{lem}\label{lem:nielsen-induction}
Fix $n \geq 1$. Suppose any $h \in \Mod(M_k)$ of order dividing $2$ is realizable by an order $2$ complex equivariant connected sum fixing a real surface for all $0 \leq k < n$ and $k = *$. If $g \in \Mod(M_n)$ is a reducible element of order dividing $2$, then $g$ is realizable by an order $2$ complex equivariant connected sum fixing a real surface. 
\end{lem}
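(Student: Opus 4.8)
The plan is to use reducibility to split $g$ into a piece living on a smaller del Pezzo manifold, which the inductive hypothesis handles, and a piece on $\#k\overline{\CP^2}$, which I build from copies of $\overline{\CP^2}$ by equivariant connected sums. First I would invoke Definition \ref{defn:irreducibility}: since $g$ is reducible of order dividing $2$, there are a del Pezzo manifold $N$, an integer $k>0$, and an isometry $\iota$ giving an orthogonal splitting $H_2(M_n;\Z)=\iota(H_2(N;\Z))\oplus\iota(H_2(\#k\overline{\CP^2};\Z))$ under which $g=\iota_*(g_1,g_2)$ with $g_1\in\Mod(N)$ and $g_2\in\Mod(\#k\overline{\CP^2})=\OO(k)(\Z)$. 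As $\iota_*$ is injective, both $g_1$ and $g_2$ have order dividing $2$. Comparing the ranks of $H_2$ forces $N=M_j$ with $j=n-k<n$, or $N=M_*$ (with $k=n-1$); in either case the inductive hypothesis applies to $g_1$, and $N\#k\overline{\CP^2}\cong M_n$ (using $M_*\#\overline{\CP^2}\cong M_2$ in the second case).

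Next I would normalize $g_2$. Every involution in $\OO(k)(\Z)$ is a signed permutation matrix, hence conjugate inside $\OO(k)(\Z)$ to a block-diagonal matrix whose blocks are $(1)$, $(-1)$, and $\bigl(\begin{smallmatrix}0&1\\1&0\end{smallmatrix}\bigr)$; the only point to check is that a signed transposition $\bigl(\begin{smallmatrix}0&-1\\-1&0\end{smallmatrix}\bigr)$ is conjugate to $\bigl(\begin{smallmatrix}0&1\\1&0\end{smallmatrix}\bigr)$ by the diagonal sign change $\mathrm{diag}(1,-1)$. Realizing the conjugating isometry by a diffeomorphism via Theorem \ref{thm:diffeo-realizable}, and noting that realizability by a complex equivariant connected sum fixing a real surface is invariant under conjugation by a diffeomorphism, I may assume $g_2$ is in this standard form, with $a$ blocks $(1)$, $b$ blocks $(-1)$, and $c$ swap blocks, so $k=a+b+2c$.

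Then I would assemble the realization one summand at a time. The inductive hypothesis provides a complex equivariant connected sum $(N,\langle h_0\rangle)$ realizing $g_1$ and fixing a real surface. Each $(1)$-block is realized by the biholomorphism $[X:Y:Z]\mapsto[-X:Y:Z]$ of $\overline{\CP^2}$ (trivial on $H_2$, fixing a $\CP^1$) and each $(-1)$-block by complex conjugation on $\overline{\CP^2}$ (acting by negation, fixing $\RP^2$); I attach these with Lemma \ref{lem:connect-inv} along the fixed real surfaces, producing complex equivariant connected sums of the second type in Definition \ref{defn:cecs} that still fix a real surface and act by $\pm1$ on the new class. Each swap block is realized by the third type of construction in Definition \ref{defn:cecs}, the equivariant connected sum with $(\Z/2\Z)\times\overline{\CP^2}$, glued along a free orbit of the current involution chosen off its fixed surface, so that the real surface survives and the two new homology classes are exchanged. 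The outcome is a complex equivariant connected sum on $N\#k\overline{\CP^2}\cong M_n$ whose homological action is $g_1\oplus g_2$, and which fixes a real surface; conjugating back by the diffeomorphism from the normalization step realizes $g$ itself.

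I expect the main obstacle to be bookkeeping rather than anything deep: checking that the homological action of the assembled involution is exactly $g_1$ on the old summand and the standard block form of $g_2$ on the new ones, and confirming that at each stage a fixed real surface (for the $\pm1$ gluings) and a free orbit disjoint from it (for the swaps) are available — the latter being guaranteed since each $h_0$ is a genuine involution, so its fixed set is a proper submanifold. The one genuinely non-formal input is the conjugation-invariance of realizability under Theorem \ref{thm:diffeo-realizable}, which is what lets me pass to the normal form of $g_2$.
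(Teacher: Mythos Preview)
Your proposal is correct and follows essentially the same approach as the paper: both split $g$ into a del Pezzo piece handled by the inductive hypothesis and an $\OO(k)(\Z)$ piece realized on copies of $\overline{\CP^2}$ via the involutions $[X:Y:Z]\mapsto[-X:Y:Z]$, complex conjugation, and the $(\Z/2\Z)\times\overline{\CP^2}$ swap construction, glued using Lemma~\ref{lem:connect-inv}, with conjugation handled by Theorem~\ref{thm:diffeo-realizable}. The only cosmetic difference is that you normalize $g_2$ to block form up front and attach all $\overline{\CP^2}$'s at once, whereas the paper peels off one or two $\overline{\CP^2}$'s at a time (reducing to $k=n-1$ or $n-2$) and leans on the inductive hypothesis for the remaining $M_{n-1}$ or $M_{n-2}$; your version simply unrolls that induction.
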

\begin{proof}
Suppose $g$ is contained in the image of a standard inclusion 
\[
	\iota_*: \Mod(M_k) \times \Mod(\# \ell \overline{\CP^2}) \hookrightarrow \Mod(M_n)
\]
for some $k \leq n-1$ and $\ell = n-k$ or $k = *$ and $\ell = n-1$. Suppose $g = \iota_*(h_1, h_2)$ with $(h_1, h_2) \in \Mod(M_k) \times \Mod(\# \ell\overline{\CP^2})$. Up to conjugacy in $ \Mod(\#\ell\overline{\CP^2})$, any $h_2 \in \Mod(\#\ell\overline{\CP^2})$ of order dividing $2$ satisfies:
\[
	h_2: E_i \mapsto E_{j_i} \text{ or }\pm E_i
\]
for all $1 \leq i \leq \ell$ and some $j_i\neq i$. Because $h_2$ preserves $\Z\{E_i, E_{j_i}\}$ or $\Z\{E_i\}$, we may assume that $\ell = 2$ or $1$ respectively. Without loss of generality, suppose $h_2(E_1) = E_2$ or $h_2(E_1) = \pm E_1$.

Let $(M_{k}, \langle h \rangle)$ be an order $2$ complex equivariant connected sum (where $h$ is a diffeomorphism of $M_{k}$ fixing a real surface $S \subseteq M_{k}$) such that $[h] = h_1$. 
\begin{enumerate}
\item Suppose $\ell = 1$. If $h_2(E_1) = E_1$ then let $f: \overline{\CP^2} \to \overline{\CP^2}$ with $f: [X:Y:Z] \mapsto [-X:Y:Z]$. If $h_2(E_1) = -E_1$ then let $f: \overline{\CP^2} \to \overline{\CP^2}$ with $f: [X:Y:Z] \mapsto [\overline X:\overline Y:\overline Z]$. In either case, $f$ fixes a real surface in $\overline{\CP^2}$. There is a complex equivariant connected sum $(M_{k} \# \overline{\CP^2}, \Z/2\Z)$ fixing a real surface realizing $(h_1, h_2)$ by Lemma \ref{lem:connect-inv}.

\item Suppose $\ell = 2$ and $h_2(E_1) = E_2$. Then $(M_{k} \# ((\Z/2\Z)\times\overline{\CP^2}), \Z/2\Z)$ gives the desired complex equivariant connected sum. 
\end{enumerate}

Any reducible $g \in \Mod(M_n)$ of order dividing $2$ is conjugate to some $g_0 \in \Mod(M_n)$ contained in the image of a standard inclusion $\iota_*$; let $g = f^{-1}g_0f$ for some $f \in \Mod(M_n)$. By Theorem \ref{thm:diffeo-realizable}, there exists a diffeomorphism $F \in \Diff^+(M_n)$ with $[F] = f$. If $(M_n, G)$ is a complex equivariant connected sum realizing $g_0$ then $(M_n, F^{-1} G F)$ is a complex equivariant connected sum realizing $g$.
\end{proof}

With the inductive step in hand, we prove the smooth Nielsen realization problem for involutions on del Pezzo manifolds. 
\begin{proof}[Proof of Corollary \ref{cor:involutions}]
We will show that for any del Pezzo manifold $M$, any $g \in \Mod(M)$ of order dividing $2$ is realized by a complex equivariant connected sum of order $2$ fixing a real surface. 

The claim holds for $M = M_*$ and $M_0$ by Lemma \ref{lem:nielsen-base}. Fix $1 \leq n \leq 8$ and suppose that the claim holds for $M = M_k$ for all $0 \leq k < n$. We will prove the claim for $M = M_n$.

Let $g \in \Mod(M_n)$ be an element of order dividing $2$. If $g$ is reducible then $g$ is realized by a complex equivariant connected sum of order $2$ fixing a real surface by Lemma \ref{lem:nielsen-induction}. 

Suppose $g$ is irreducible. If $g \in \Mod^+(M)$ then Theorem \ref{thm:involutions} shows that $g$ is realized by a complex automorphism of some $\Bl_P\CP^2 \cong M$ induced by de Jonqui\'eres, Geiser, or Bertini involutions. All such automorphisms fix a complex curve in $\Bl_P\CP^2$. If $g \notin \Mod^+(M)$ then Theorem \ref{thm:involutions} shows that $g$ is realized by some anti-biholomorphism $f$ of order $2$ of a complex surface $\Bl_P\CP^2 \cong M$ and $-g$ is represented by an automorphism of $\Bl_P\CP^2$ induced by a de Jonqui\'eres, Geiser, or Bertini involution.

To show that $f$ fixes a real surface in $M$, we apply the Hirzebruch $G$-signature theorem (\cite[Section 9.2, (12)]{hirzebruch--zagier}) which says that if $f_0$ is a smooth involution of $M$ then
\[
	2\sigma(M/\langle f_0 \rangle) = \sigma(M) + \sum_C \defect_C
\]
where
\begin{enumerate}
	\item $\sigma(M/\langle f_0 \rangle)$ is the signature of the restriction of $Q_M$ to the fixed subspace of $H_2(M; \R)$ under $(f_0)_*$ (cf. \cite[Section 2.1, (22)]{hirzebruch--zagier}),
	\item $\sigma(M)$ is the signature of the $4$-manifold $M$, and
	\item the sum $\sum_C \defect_C$ is taken over the $2$-dimensional components of the fixed set of $f_0$ and $\defect_C$ denotes the quantity called the \emph{defect} of $C$. To be precise, the statement of the Hirzebruch $G$-signature theorem also involves defects $\defect_p$ associated to isolated fixed points $p$. However, $\defect_p = 0$ for all isolated fixed points $p$ when $f_0$ has order $2$. See \cite[Section 9.2]{hirzebruch--zagier} or \cite[Remark 4.4]{lee} for more details.
\end{enumerate}
We compute $2\sigma(M/\langle f \rangle)$ and $\sigma(M)$ in each of the three cases.
\begin{enumerate}
	\item Suppose $-g \in \Mod(M_n)$ is represented by a de Jonqui\'eres involution and $n = 5$ or $7$. By Lemma \ref{lem:de-jonquieres-homology}, the $\Z[\langle g \rangle]$-module structure of $H_2(M_n; \R)$ is isomorphic to
	\[
		H_2(M_n; \R) \cong C^{\oplus 2} \oplus \R^{\oplus n-1}
	\]
	where $C \cong \R$ as an $\R$-vector space and $g$ acts by negation. Moreover, this decomposition must be orthogonal and
	\[
		C^{\oplus 2} = \R\{S_2, \, 2S_1-e_1-\dots-e_{n-1}\}.
	\]
	With respect to this basis, the restriction of $Q_{M_n}$ to $C^{\oplus 2}$ is 
	\[
		Q_{M_n}|_{C^{\oplus 2}} = \begin{pmatrix}
		0 & 2 \\
		2 & -(n-1)
		\end{pmatrix}
	\]
	which has signature $0$. The signature of a direct sum of orthogonal subspaces is the sum of the respective signatures, meaning that 
	\[
		1 - n = \sigma(M_n) = \sigma(C^{\oplus 2}) + \sigma(H_2(M_n; \R)^{\langle g \rangle}) = \sigma(M_n/\langle f \rangle)
	\]
	The $G$-signature theorem implies that
	\[
		\sum_C \defect_C = 2 \sigma(M_n/\langle f \rangle) - \sigma(M_n) = 1-n \neq 0.
	\]
	Therefore, there exist real surfaces $C \subseteq M_n$ fixed by $f$.

	\item Suppose $-g \in \Mod(M_n)$ is represented by a Geiser or Bertini involution and $n = 7$ or $8$ respectively. There is an orthogonal decomposition
	\[
		H_2(M_n; \R) = \R\{K_{X_n}\} \oplus \mathbb E_n \otimes \R;
	\]
	here, $-g$ acts by negation on $\mathbb E_n$ and fixes $\R\{K_{X_n}\}$. Therefore, 
	\[
		H_2(M_n; \R)^{\langle g \rangle} = \mathbb E_n \otimes \R.
	\]
	The restriction of $Q_{M_n}$ to $\mathbb E_n$ is negative-definite so $\sigma(M_n / \langle f \rangle) = -n$. The $G$-signature theorem implies that
	\[
		\sum_C \defect_C = 2 \sigma(M_n/\langle f \rangle) - \sigma(M_n) = -n-1 \neq 0.
	\]
	Therefore, there exist real surfaces $C \subseteq M_n$ fixed by $f$.
\end{enumerate}
Therefore, any $g \in \Mod(M_n)$ of order dividing $2$ is realized by a complex equivariant connected sum of order $2$ fixing a real surface. The corollary now follows by induction on $n$.
\end{proof}

\bibliographystyle{alpha}
\bibliography{del-pezzo}

\bigskip
\noindent
Seraphina Eun Bi Lee \\
Department of Mathematics \\
University of Chicago \\
\href{mailto:seraphinalee@uchicago.edu}{seraphinalee@uchicago.edu}

\end{document}